





\documentclass[pdflatex,sn-mathphys]{sn-jnl}



\jyear{2021}%

\theoremstyle{thmstyleone}%
\newtheorem{theorem}{Theorem}
%
\newtheorem{lemma}[theorem]{Lemma}%
\newtheorem{conjecture}[theorem]{Conjecture}%

\theoremstyle{thmstyletwo}%
\newtheorem{example}{Example}%
\newtheorem{remark}{Remark}%

\theoremstyle{thmstylethree}%
\newtheorem{definition}{Definition}%

\raggedbottom

\RequirePackage{amsthm,amsmath,amsfonts,amssymb}
\RequirePackage{graphicx}
\usepackage{comment}

\newcommand{\RR}{\mathbb{R}}


\newcommand{\tdet}{\text{tdet\,}}
\DeclareMathOperator*{\argmin}{arg\,min}

\begin{document}

\title[Stiefel Tropical Linear Space]{Pl\"ucker Coordinates of the best-fit Stiefel Tropical Linear Space to a Mixture of Gaussian Distributions}


\author[1]{\fnm{Keiji} \sur{Miura}}\email{miura@kwansei.ac.jp}

\author*[2]{\fnm{Ruriko} \sur{Yoshida}}\email{ryoshida@nps.edu}
\equalcont{These authors contributed equally to this work.}

\affil[1]{\orgdiv{School of Biological and Environmental Sciences}, \orgname{Kwansei Gakuin University}, \orgaddress{\street{1 Gakuen Uegahara}, \city{Sanda},\postcode{669-1330},\state{Hyogo},\country{Japan}}}

\affil*[2]{\orgdiv{Department of Operations Research}, \orgname{Naval Postgraduate School}, \orgaddress{\street{1411 Cunningham Road}, \city{Monterey}, \postcode{93943}, \state{CA}, \country{USA}}}


\abstract{In this research, we investigate a tropical principal component analysis (PCA) as a best-fit Stiefel tropical linear space to a given sample over the tropical projective torus for its dimensionality reduction and visualization.
Especially, we characterize the best-fit Stiefel tropical linear space to a sample generated from a mixture of Gaussian distributions as the variances of the Gaussians go to zero.
For a single Gaussian distribution, we show that the sum of residuals in terms of the tropical metric with the max-plus algebra over a given sample to a fitted Stiefel tropical linear space converges to zero by giving an upper bound for its convergence rate. Meanwhile, for a mixtures of Gaussian distribution, we show that the best-fit tropical linear space can be determined uniquely when we send variances to zero.
We briefly consider the best-fit topical polynomial as an extension for the mixture of more than two Gaussians over the tropical projective space of dimension three.
We show some geometric properties of these tropical linear spaces and polynomials.}

\keywords{Max-plus Algebra, Principal Component Analysis, Tropical Geometry, Tropical Metric, Mixtures of Gaussian Distributions, Tropical Polynomials}



\maketitle

\section{Introduction}

Principal component analysis (PCA) is a powerful and most popular method to visualize and to reduce dimensionality of high dimensional datasets using tools in linear algebra \citep{yoshidaBook}.
Principal components can be obtained by solving an optimization problem to find the best-fit linear space to a given sample over an Euclidean space.  The primal problem of this optimization is to minimize the sum of squares of distances between each observation in the given dataset to its orthogonal projection onto the linear space, and the dual problem is to find the largest direction of the variance of a given dataset.
In the multivariate analyses, the results rely heavily on a given metric, which essentially determines how similar any pair of data points are. 
Thus replacing the conventional Euclidean metric by another metric can work depending on problems, especially datasets from non-Euclidean spaces.
Tropical linear algebra has been well studied by many mathematicians (for example, \cite{Joswig}, \cite{MS} and \cite{CASTELLA20101460}).  Especially, it is well-known that convexity with the tropical metric behaves very well \citep{LSTY}.  Therefore, in 2017, Yoshida {\it et~al}. \cite{YZZ}~applied {\em tropical linear algebra} to PCA by solving the primal problem of the optimization with the {\em tropical metric} over the tropical projective torus using the max-plus algebra.

Therein \cite{YZZ}, two approaches to PCA using tropical geometry has been developed:
(i) the {\it tropical polytope} with a fixed number of vertices ``closest'' to the data points in the tropical projective torus or the space of phylogenetic trees with respect to the tropical metric; and
(ii) the {\it Stiefel tropical linear space} of fixed dimension ``closest'' to the data points in the tropical projective torus with respect to the tropical metric.
Here ``closest'' means that a tropical polytope or a Stiefel tropical linear space has the smallest sum of {\em tropical distances} between each observation in the given sample and its projection onto them in terms of the tropical metric.
The first approach (i) 
has been well studied and applied to phylogenomics  \citep{10.1093/bioinformatics/btaa564},
as the (equidistant) trees space has a nice property of tropical convexity \citep{LSTY}:
since the space of equidistant trees with a fixed set of labels for leaves is tropically convex \citep{SS} and since a tropical polytope is tropically convex \citep{Joswig}, a tropical polytope is in the space of equidistant trees if all vertices of the tropical polytope are in the tree space. 
Meanwhile, the second approach (ii) 
had little attention, even though the tropical projective space can be essential in data analyses such as the characterization of the neural responses under nonstationarity \citep{YTMM}.

The Stiefel tropical linear space, that can be characterized by a Pl\"ucker coordinate computed from a matrix, has been studied and it has nice properties, such as projection and intersection (\cite{MS}, \cite{SS}, \cite{JSY}, \cite{FR}).
In the second approach (ii), Yoshida {\it et~al}. \cite{YZZ}~showed explicit formulation on the best-fit Stiefel tropical linear space to a given sample when the Stiefel tropical linear space is a tropical hyperplane and the sample size is equal to the dimension of the tropical projective space.
Recently, Akian {\it et~al}.~developed a tropical linear regression over the tropical projective space and extended the best-fit tropical hyperplane to a sample with any sample size $\geq 2$ \cite{Akian2021}. 
However, in general, their formulation does not hold for finding the best-fit Stiefel tropical linear space if we vary the dimension of the Stiefel tropical linear space.

In this paper, therefore, we consider the explicit formulation of the best-fit Stiefel tropical linear space to a sample when we vary the dimension of the space and the sample size.
More specifically, we focus on fitting a Stiefel tropical linear space of any smaller dimension to a sample with sample size $\geq 2$ generated from a mixture of Gaussian distributions.
In order to uniquely specify 
a Stiefel tropical linear space over the {\em tropical projective space} $(\mathbb R \cup \{-\infty\})^d \!/\mathbb R {\bf 1}$ with ${\bf 1} = (1, 1, \ldots , 1)$, we use the {\em Pl\"ucker coordinate} or the matrix $A \in (\mathbb{R} \cup \{-\infty\})^{m \times d}$ associated to it, where $m < d$ and $m-1$ is the dimension of the Stiefel tropical linear space.
To compute the Pl\"ucker coordinate of a Stiefel tropical linear space is equivalent to compute tropical determinants of minors of its associated matrix.  As Xie studied geometry of tropical determinants of $2 \times 2$ matrices in \cite{XIE202192}, we study geometry of the best-fit Stiefel tropical linear space to a sample generated by a Gaussian distribution, as a
location of the ``apex'', i.e., the center of the Stiefel tropical linear space. 
Then we also study geometry of tropical polynomials.
Specifically, we show an algorithm to project an observation onto a tropical polynomial in terms of the tropical metric and propose an algorithm to compute the best-fit tropical polynomial to a given sample in ${\mathbb R}^3 \!/\mathbb R {\bf 1}$.

This paper is organized as follows:
In Section \ref{sec:basics} we describe basics in tropical arithmetic and geometry.
Then in Section \ref{sec:principal}, we define the {\em {Best-fit} Stiefel tropical linear space}, that is, the best-fit Stiefel tropical linear space over $(\mathbb R \cup \{-\infty \})^d \!/\mathbb R {\bf 1}$ with a given sample.
Section \ref{sec:oneGaussian} describes a characterization of the matrix associated with the Pl\"ucker coordinate of the best-fit Stiefel tropical linear space to a sample generated by a Gaussian distribution when we send the variances to zero.
In this section we also investigate geometry of the best-fit Stiefel tropical linear space of the dimension $m-1$ 
when $m = d - 1$.
Section \ref{sec:twoGaussian} generalizes the results in Section \ref{sec:oneGaussian} to a mixture of two Gaussian distributions.
In Section \ref{sec:tropoly} we show an algorithm to project an observation to a tropical polynomial in terms of the tropical metric and investigate the best-fit tropical polynomial to a sample when the variances are very small.

\subsection{Contribution}
We characterize the matrix associate with the Pl\"ucker coordinate for the best-fit Stiefel tropical linear space of dimension $m-1$ to a sample generated by a Gaussian distribution over the tropical projective torus $\mathbb R^d \!/\mathbb R {\bf 1}$ as we send all variances to zero.  Then we also characterize the matrix associate with the Pl\"ucker coordinate for the  best-fit Stiefel tropical linear space of dimension $m-1$ to a sample generated by a mixture of $l$ many Gaussian distributions over the tropical projective torus $\mathbb R^d \!/\mathbb R {\bf 1}$ as we send all variances to zero. Then we investigate the best-fit tropical polynomial to a sample generated by a mixture of Gaussian distributions and propose one way to estimate the best-fit tropical polynomial equation to fitting the set of such observations.  

\section{Basics of Stiefel Tropical Linear Spaces}\label{sec:basics} 

Recall that through this paper we consider the tropical projective torus $\mathbb R^d \!/\mathbb R {\bf 1}$, which is isometric to $\mathbb R^{d-1}$. 
Here is a remark for the experts: we observe that tropical linear spaces are subsets of the \emph{tropical projective space} $(\mathbb R\cup\{-\infty\})^d/\mathbb R{\bf 1}$ rather than the tropical projective torus $\mathbb R^d/\mathbb R {\bf 1}$.
This relatively technical point will not be important in what follows, as the projection of a point in the tropical projective torus into a Stiefel tropical linear space remains in the tropical projective torus.
So in the basic definitions, we will use $(\mathbb R\cup\{-\infty\})^d/\mathbb R{\bf 1}$ instead of $\mathbb R^d \!/\mathbb R {\bf 1}$.
For basics of tropical geometry, see \cite{MS} for more details. In addition, the authors recommend readers to see \cite{Hampe} which contains very nice properties of tropical linear spaces and tropical convexity with the max-plus algebra.  

\begin{definition}[Tropical Arithmetic Operations]
  Throughout this paper we will perform arithmetic in the
  max-plus tropical semiring $(\,\mathbb{R} \cup \{-\infty\},\boxplus,\odot)\,$.
  In this tropical semiring,  the basic tropical
  arithmetic operations of addition and multiplication are defined as:
  $$a \boxplus b := \max\{a, b\}, ~~~~ a \odot b := a + b, ~~~~\mbox{  where } a, b \in \mathbb{R}\cup\{-\infty\}.$$
  \end{definition}

  \begin{definition}[Tropical Scalar Multiplication and Vector Addition]
  For any scalars $a,b \in \mathbb{R}\cup \{-\infty\}$ and for any vectors $v = (v_1,
 \ldots ,v_d), w= (w_1, \ldots , w_d) \in (\mathbb{R}\cup\{-\infty\})^d$, we
  define tropical scalar multiplication and tropical vector addition as follows:
    $$a \odot v:= (a + v_1,  \ldots ,a + v_d),$$
    $$a \odot v \boxplus b \odot w := (\max\{a+v_1,b+w_1\}, \ldots, \max\{a+v_d,b+w_d\}).$$
    \end{definition}

\begin{definition}[Generalized Hilbert Projective Metric]
\label{tropdist}
For any two vectors $v, \, w \in (\mathbb R\cup\{-\infty\})^d \!/\mathbb R {\bf 1}$,  the {\em tropical distance} $d_{\rm tr}(v,w)$ between $v$ and $w$ is defined as:
\begin{equation*} \label{eq:tropmetric} 
d_{\rm tr}(v,w)  := \max_{i,j} \bigl\{ \lvert v_i - w_i  - v_j + w_j \rvert : 1 \leq i < j \leq d \bigr\} = \max_{i} \bigl\{ v_i - w_i \bigr\} - \min_{i} \bigl\{ v_i - w_i \bigr\},
\end{equation*}
where $v = (v_1, \ldots , v_d)$ and $w= (w_1, \ldots , w_d)$. 
\end{definition}
\begin{remark}[Lemma 5.2 in \cite{Hampe}]
The tropical metric $d_{\rm tr}$ over $\mathbb R^d \!/\mathbb R {\bf 1}$ is twice the quotient norm of the maximum norm on $\mathbb R^d$.
\end{remark}

\begin{definition}[Tropical Hyperplane]
For any $\omega:=(\omega_1, \ldots, \omega_d)\in {(\mathbb R\cup\{-\infty\})^d} \!/\mathbb R {\bf 1}$ {such that $\omega \not = (-\infty, \ldots , -\infty)$},
the {\em tropical hyperplane} $H_{\omega}$ is the set of points $x\in (\mathbb R\cup\{-\infty\})^d \!/\mathbb R {\bf 1}$ such that
the maximum of $\{\omega_1+x_1, \ldots \omega_d+x_d\}$ is attained at least twice \citep{Joswig,YZZ}.
We call $\omega$ the {\em normal vector} of the tropical hyperplane $H_{\omega}$.
\end{definition}

\begin{example}
$H_\omega$ for $(\omega_1,\omega_2,\omega_3)=(0,0,0)$, or simply $H_0$, will be illustrated as the gray lines in Figure \ref{pic_contour} (left).
\end{example}

There is an explicit formula to compute a tropical distance from an observation to a tropical hyperplane (Lemma 2.1 and Corollary 2.3 in \cite{Gartner}).
Therefore it is easier to find the best-fit tropical hyperplane over the tropical projective space.
Meanwhile, it is not enough to work with best-fit tropical hyperplanes because we can reduce only one dimension from the ambient space with a tropical hyperplane.
Therefore, in this paper, we consider a lower dimensional Stiefel tropical linear space as the subspace to which we project data points.
In what follows, let $[m]$ denotes the set of integers $\{1, 2, \ldots, m\}$ where $m$ is a positive integer.

\begin{definition}[Tropical Matrix]
For any $V = \{v^{(1)}, \ldots , v^{(m)}\} \subset (\mathbb R\cup\{-\infty\})^d /\mathbb R{\bf 1}$, we define a {\em tropical matrix} $M_V$  such that the size of $M_V$ is $m \times d$, and for any $i\in [m]$, the $i$-th row of $M_V$ is $v^{(i)}$ (note here, we assume $v^{(i)}$ is a row vector). 
\end{definition}

\begin{definition}[Tropical Determinant]
Let $q$ be a positive integer. For any tropical matrix $A$ of size $q\times q$ with entries in $\mathbb{R}\cup\{-\infty\}$, the {\em tropical determinant} of $A$ is defined as:
\begin{equation*} \label{eq:tropdet}
\tdet(A) := \max _{\sigma \in S_{q}}\left\{A_{1, \sigma(1)}+A_{2, \sigma(2)}+\ldots+A_{q, \sigma(q)}\right\},
\end{equation*}
where $S_{q}$ is all the permutations of $[q]:=\{1, \ldots , q\}$, and $A_{i,j}$ denotes the $(i,j)$-th entry of $A$.
\end{definition}

\begin{remark}
The tropical determinant of any non-square matrix is $-\infty$.
\end{remark}

Our treatment of tropical linear spaces largely follows \cite[Sections 3 and 4]{JSY}. 

\begin{definition}[Tropical Pl\"ucker Vector]
Let $[d]:= \{1, \ldots , d\}$.
For two positive integers $d,m$ with $d>m$, if a map $p:[d]^m\mapsto \RR\cup\{-\infty\}$ satisfies the following conditions:
\begin{enumerate}
	\item $p(\omega)$ depends only on the unordered set $\omega=\{\omega_1,\ldots,\omega_m\}\subseteq [d]$,
	\item $p(\omega)=-\infty$ whenever $\omega$ has fewer than $m$ elements, and
	\item {f}or any $\sigma = \{\sigma_1,\ldots,\sigma_{m-1}\}\subseteq [d]$, and for any $\tau = \{\tau_1,\ldots,\tau_{m+1}\}\subseteq [d]$, the maximum
	\[\max\{p(\sigma\cup\{\tau_1\})+p(\tau \setminus \{\tau_1\}),\ldots,p(\sigma\cup\{\tau_{m+1}\})+p(\tau \setminus \{\tau_{m+1}\})\}\]
	is attained at least twice,
\end{enumerate}
then we say $p$ is a \emph{tropical Pl\"ucker vector}.
\end{definition}

\begin{definition}[Tropical Pl\"ucker Coordinate]
For two positive integers $d,m$ with $d>m$, let $p:[d]^m\mapsto \RR\cup\{-\infty\}$ be a tropical Pl\"ucker vector. For any $m$-sized subset $\omega\subseteq [d]$, $p(\omega)$ is called \emph{tropical Pl\"ucker coordinate} of $p$.
\end{definition}

\begin{definition}[Tropical Linear Space]
Let $p:[d]^m\mapsto  \RR\cup\{-\infty\}$ be a tropical Pl\"ucker vector. The \emph{tropical linear space} of $p$ is the set of points $x\in (\mathbb R\cup
\{-\infty\})^d/\mathbb R {\bf 1}$ such that, for any $\tau = \{\tau_1,\ldots,\tau_{m+1}\}\subseteq [d]$, the maximum 
\[\max\{p(\tau \setminus \{\tau_1\})+x_{\tau_1}, \ldots, p(\tau \setminus \{\tau_{m+1}\})+x_{\tau_{m+1}}\}\]
is attained at least twice. {Denote $L_p$ the \emph{tropical linear space} of $p$.}
\end{definition}

A note for experts:  it is well known that tropical linear spaces are tropically convex \cite[Proposition 5.2.8]{MS}. 


\begin{definition}[Stiefel Tropical Linear Space]
\label{stiefel-tropical-linear-space}
For two positive integers $d,m$ with $d>m$, let $A$ be a matrix of size $m\times d$ with entries in $\mathbb R\cup \{-\infty\}$. For any $m$-sized subset $\omega\subseteq [d]$, we write $A_\omega$ for the $m\times m$ matrix whose columns are the columns of $A$ indexed by elements of $\omega$. Notice that
\[p_{A}:\omega\mapsto \tdet(A_\omega)\]
is a tropical Pl\"ucker vector associate with $A$. The tropical linear space of $p_{A}$ is called the {\emph{Stiefel tropical linear space of $A$}}, denoted by $L(A)$.
\end{definition}

\begin{remark}
Let $A$ be a tropical matrix of size $(d-1) \times d$.
Then the Stiefel tropical linear space of $A$ is a tropical hyperplane.
Furthermore, any tropical hyperplane is {a} Stiefel tropical linear space \cite[Remark 1.21]{zhang2021}.  For more details on the geometry of tropical linear spaces including tropical hyperplane{s}, see \cite{Hampe, joswigBook}.
\end{remark}

\begin{example}\label{tropical-hyperplane-example}
Let
$ A = \left(\begin{matrix}-\omega_1 & -\infty & 0\\ -\infty & -\omega_2 & 0\end{matrix}\right)$.
Then the tropical Pl\"ucker coordinates of $p_A$ are
$p_{A}(\{1,2\})\!=\!\tdet \!\!\! \left(\begin{matrix}-\omega_1&-\infty\\-\infty&-\omega_2\end{matrix}\right)\!=\!-\omega_1-\omega_2$, 
$p_{A}(\{1,3\})\!=\!\tdet \!\!\! \left(\begin{matrix}-\omega_1&0\\-\infty&0\end{matrix}\right)\!=\!-\omega_1$, and 
$p_{A}(\{2,3\})\!=\!\tdet \!\!\! \left(\begin{matrix}-\infty&0\\-\omega_2&0\end{matrix}\right)\!=\!-\omega_2$.
The Stiefel tropical linear space of $A$ consists of $x$ for which
\begin{eqnarray*}
& & \max\{ p_{A}(\{2,3\})+x_1, ~ p_{A}(\{1,3\})+x_2, ~ p_{A}(\{1,2\})+x_3 \} \\
&=& \max\{-\omega_2 + x_1, ~ -\omega_1 + x_2, ~ -\omega_1-\omega_2+x_3\}
\end{eqnarray*}
is attained at least twice.
This is the tropical hyperplane $H_{(\omega_1,\omega_2,0)}$.
\end{example}

\begin{example}
\label{tropical-linear-space-example}
Let
$A = \left(\begin{matrix}0 & 5 & -5 & c\\
0 & -5 & 5 & -c\end{matrix}\right)$
with $0 < c < 5$.
Then the tropical Pl\"ucker coordinates of $p_A$ are
$p_{A}(\{1,2\})=p_{A}(\{1,3\})=5$, $p_{A}(\{1,4\})=c$,  $p_{A}(\{2,3\})=10$, $p_{A}(\{2,4\})=5-c$, and $p_{A}(\{3,4\})=5+c.$
The Stiefel tropical linear space of $A$ consists of $x$ for which the maximum is attained at least twice for any of the following four cases of $(2+1)$-subset{s} of $[4]$:
\begin{eqnarray}
\label{eqA} \textrm{For } \tau=\{2,3,4\}, & & \max\{5+c+x_2, 5-c+x_3, 10+x_4\}.\\
\label{eqB} \textrm{For } \tau=\{1,3,4\}, & & \max\{5+c+x_1, c+x_3, 5+x_4\}.\\
\label{eqC} \textrm{For } \tau=\{1,2,4\}, & & \max\{5-c+x_1, c+x_2, 5+x_4\}.\\
\label{eqD} \textrm{For } \tau=\{1,2,3\}, & & \max\{10+x_1, 5+x_2, 5+x_3\}.
\end{eqnarray}
Without loss of generality, we will set $x_1=0$.
From \eqref{eqD}, Case-A ($x_2=5$ and $x_3 \leq 5$), Case-B ($x_3=5$ and $x_2 \leq 5$), or Case-C ($x_2=x_3 \geq 5$) holds.
\\
$\bullet$ Case-A:
$\eqref{eqC} \Rightarrow x_4=c \Rightarrow \eqref{eqB}, \eqref{eqA}$ (already satisfied).
Thus, $x_2=5, x_3 \leq 5, x_4=c$.
\\
$\bullet$ Case-B:
$\eqref{eqB} \Rightarrow x_4 \leq c$.
$\eqref{eqA} \Leftrightarrow \eqref{eqC}$.
Thus, 
$x_2=5-2c, x_3=5, x_4\leq -c$ or $x_2\leq 5-2c, x_3=5, x_4=-c$ or $x_3=5, x_2=x_4+5-c, -c \leq x_4 \leq c$.
\\
$\bullet$ Case-C:
$\eqref{eqC} \Rightarrow x_4 = x_3 + c-5 \Rightarrow \eqref{eqB}, \eqref{eqA}$.
Thus,  $x_2=x_3 \geq 5, x_4=x_3+c-5$.

Taken together, the Stiefel tropical linear space for $c=1$ is shown in Figure \ref{pic_TropLinSp}.
The two hinges in the tropical linear space $(0,5-2c,5,-c)$ and $(0,5,5,c)$
are connected and each hinge is also connected to two more half lines in a balanced manner.
\end{example}

\begin{figure}[!ht]
\centering
\includegraphics[width=3in]{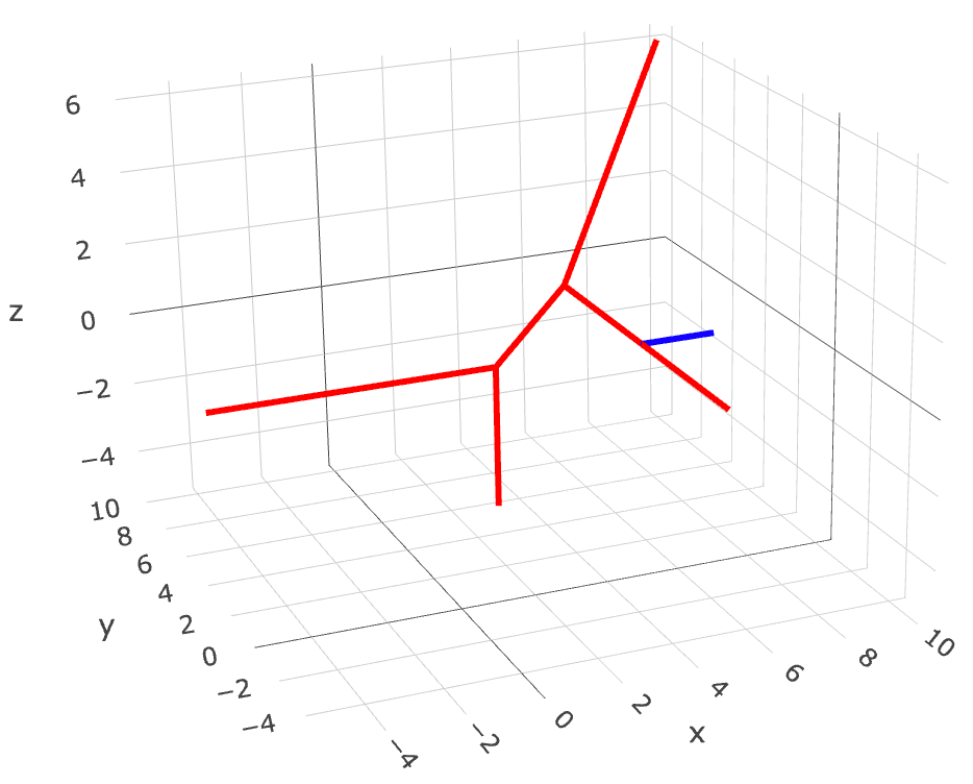}
\caption{The Stiefel tropical linear space in Example \ref{tropical-linear-space-example} with $c=1$ (red).
The blue perpendicular represents the projection of $u=(0,7,0,1)$ to $w=(0,5,0,1)$ on the tropical linear space in Example \ref{projection-example}.}
\label{pic_TropLinSp}
\end{figure}

\begin{remark}
This Stiefel tropical linear space {contains} the unique tropical line segment connecting $p=(0,5,-5,1)$ and $q=(0,-5,5,-1) (= p + (0,0,10,0) + (0,-2,0,-2) + (0,-8,0,0))$ attached with the four half lines \citep{joswigBook}.
The Stiefel tropical linear space for $m=2$ 
as in Example \ref{tropical-linear-space-example} is one-dimensional
and is determined by specifying two points in a general position on it (\citep{joswigBook} or Theorem \ref{th:uniqueness} in this paper).
In practice, the {one-dimensional} Stiefel tropical linear spaces  are suitable for visualization. 
\end{remark}

\begin{remark}
Although the four conditions \eqref{eqA}, \eqref{eqB}, \eqref{eqC} and \eqref{eqD} are imposed, the solution is {neither} a point {n}or empty.
That is, the conditions are somehow redundant and it is not the case that each condition reduces one dimension. 
For example, the intersection (and the stable intersection) of only \eqref{eqA} and \eqref{eqB} is already $p_A$.
Here the stable intersection, computable only with tropical Pl\"ucker coordinates, reduces dimensions without fail \cite{FR}.
The intersection of \eqref{eqA} and \eqref{eqC} calculated by hand is a mixed dimensional set, while the stable intersection of \eqref{eqA} and \eqref{eqC} results in the one-dimensional Stiefel tropoical linear space which is different from $p_A$.
Similarly, the stable intersection of \eqref{eqC} and \eqref{eqD} is $p_A$.
Finally, the stable intersection of $p_A$ and \eqref{eqC} is a point $(0,5-2c,5,-c)$.
\end{remark}


To perform a ``tropical principal component analysis'', we need to project a data point onto a Stiefel tropical linear space, which is realized by the \emph{Red and Blue Rules} \cite[Theorem 15]{JSY}. 

\begin{theorem}[The Blue Rule]
\label{blue-rule}
Let $p:[d]^m\mapsto \mathbb R\cup\{-\infty\}$ be a tropical Pl\"ucker vector and $L_p$ its associated tropical linear space. Fix $u\in\mathbb R^d/\mathbb R {\bf 1}$, and define the point $w\in\mathbb R^d/\mathbb R {\bf 1}$ whose $i$-th coordinate is
\begin{equation*}
\label{eq:bluerule}
 \quad w_i \,\, = \,\,
 {\rm max}_\tau \,{\rm min}_{j \not\in \tau} \bigl\{ u_j + p({\tau \cup \{i\}}) - p({\tau \cup \{j\}}) \bigr\},
\end{equation*}
for $i = 1,2,\ldots, d$ and $\tau$ runs over all $(m-1)$-subsets of $[d]$ that do not contain $i$. Then $w\in L_p$, and any other $x\in L_p$ satisfies $d_{tr}(u,x)\geq d_{tr}(u,w)$. In other words, $w$ attains the minimum distance of any point in $L_p$ to $u$.
\end{theorem}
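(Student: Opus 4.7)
The plan is to prove membership $w\in L_p$ and distance-minimality separately, both by unpacking the nested max--min definition of $w$ and leveraging the tropical Pl\"ucker relations on $p$.

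For membership, I fix an arbitrary $(m+1)$-subset $\tau' = \{\tau_1,\ldots,\tau_{m+1}\}\subseteq[d]$ and show that
\[M \;:=\; \max_k\bigl\{p(\tau'\setminus\{\tau_k\}) + w_{\tau_k}\bigr\}\]
is attained at least twice. Substituting the Blue Rule formula rewrites this as $M=\max_{k,\sigma}\min_{j\notin\sigma} f(k,\sigma,j)$, where
\[f(k,\sigma,j) \;=\; u_j + p(\tau'\setminus\{\tau_k\}) + p(\sigma\cup\{\tau_k\}) - p(\sigma\cup\{j\}).\]
Suppose the outer $(k,\sigma)$-maximum is attained at $(k^*,\sigma^*)$. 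Applying the Pl\"ucker relation for $p$ to the $(m-1)$-subset $\sigma^*$ and the $(m+1)$-subset $\tau'$, the quantity $\max_k\{p(\sigma^*\cup\{\tau_k\}) + p(\tau'\setminus\{\tau_k\})\}$ is attained at least twice, say at $k^*$ and at some $k'\neq k^*$. The difference $f(k^*,\sigma^*,j) - f(k',\sigma^*,j)$ equals exactly the gap between those two Pl\"ucker sums, so it vanishes for every $j\notin\sigma^*$. Hence $\min_j f(k',\sigma^*,j) = M$, exhibiting $k'$ as a second attainer of the outer maximum, and $w\in L_p$ follows.

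For distance-minimality, I reformulate $w$ as the componentwise largest element of $L_p$ lying below $u$. Since $j=i$ is always among the candidates in the inner minimum defining $w_i$, and that candidate contributes $u_i + p(\tau\cup\{i\}) - p(\tau\cup\{i\}) = u_i$, one sees $w_i\le u_i$ for every $i$. Conversely, for any $y\in L_p$ with $y\le u$ componentwise, the tropical Pl\"ucker incidence at $y$ applied to the $(m+1)$-subsets $\tau\cup\{i,j\}$, combined with a careful choice of the maximizing $(m-1)$-subset $\tau^*$ in $w_i$, yields $y_i\le u_j + p(\tau^*\cup\{i\}) - p(\tau^*\cup\{j\})$ for every $j\notin\tau^*$, and hence $y_i\le w_i$. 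Granted this characterization, the distance bound is a short calculation: for any $x\in L_p$, let $c = \min_k(u_k-x_k)$ and set $\tilde x = x + c\mathbf{1}\in L_p$; then $\tilde x\le u$ componentwise, so $\tilde x\le w$, giving $u_i - w_i \le u_i - \tilde x_i = u_i - x_i - c$ for each $i$. Therefore $\max_i(u_i - w_i)\le d_{\rm tr}(u,x)$, and since $w\le u$ implies $\min_i(u_i - w_i)\ge 0$, one concludes $d_{\rm tr}(u,w)\le \max_i(u_i - w_i)\le d_{\rm tr}(u,x)$.

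The main obstacle is the converse implication ``$y\le u \Rightarrow y\le w$'': the exchange relation extracted from the Pl\"ucker incidence at $y$ provides a witness $k'$ which a priori might be an index of $\tau^*$ rather than the intended $j\notin\tau^*$, so one either selects $\tau^*$ cleverly to rule this out or iterates the exchange along a chain of $(m+1)$-subsets. This is the combinatorial heart of the proof and is what elevates the Blue Rule beyond the straightforward tropical residuation formula for projection onto tropical polytopes.
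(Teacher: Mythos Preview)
The paper does not prove this theorem at all; it is quoted as Theorem~15 of \cite{JSY} and used as a black box throughout. There is therefore no ``paper's own proof'' to compare against, and your sketch stands or falls on its own.

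Your membership argument is essentially sound, with one minor slip: the Pl\"ucker relation on $(\sigma^*,\tau')$ guarantees the maximum of $k\mapsto p(\sigma^*\cup\{\tau_k\})+p(\tau'\setminus\{\tau_k\})$ is attained twice, but not that $k^*$ is one of the attainers. This is harmless, since for any attainer $k_1$ one has $f(k_1,\sigma^*,j)\ge f(k^*,\sigma^*,j)$ for all $j$, hence $\min_j f(k_1,\sigma^*,j)\ge M$, and maximality of $M$ forces equality; two attainers of the Pl\"ucker maximum thus give two attainers of $M$. The finiteness of those Pl\"ucker values also ensures $\tau_{k_1},\tau_{k_2}\notin\sigma^*$, so $\sigma^*$ is a legitimate witness in the Blue Rule for both indices.

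Your distance-minimality argument, however, is not yet a proof, and you are candid about this. The reduction to ``$w$ is the componentwise largest element of $L_p$ below $u$'' is the right reformulation, and $w\le u$ is immediate from $j=i$. But for the converse you apply the incidence relation to $\tau^*\cup\{i,j\}$ and obtain a second max-attainer $k'$; when $k'\in\tau^*$ the resulting inequality is
\[
y_i \;\le\; y_{k'} + p\bigl((\tau^*\setminus\{k'\})\cup\{i,j\}\bigr) - p(\tau^*\cup\{j\}),
\]
which involves the wrong $(m-1)$-set and the wrong coordinate of $y$. Your proposed remedy, to ``iterate the exchange along a chain of $(m+1)$-subsets'', is the right instinct, but you supply neither the invariant that makes the chain terminate nor the argument that the accumulated Pl\"ucker differences collapse to $p(\tau^*\cup\{i\})-p(\tau^*\cup\{j\})$. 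In the source \cite{JSY} this step is handled not by ad hoc exchanges but by interpreting the Blue Rule through the cocircuits (valuated matroid structure) of $L_p$, which furnishes the required monotonicity cleanly. As written, your proposal correctly isolates the crux but leaves it open.
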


\begin{remark}
This closest point may not be unique and there may be other points in $L_p$ which have the same tropical distance from $u$.
\end{remark}

\begin{theorem}[The Red Rule]
\label{red-rule}
Let $p:[d]^m\mapsto \mathbb R \cup \{-\infty\}$ be a tropical Pl\"ucker vector and $L_p$ its associated tropical linear space. Fix $u\in\mathbb R^d/\mathbb R {\bf 1}$. Let $v$ be the all-zeros vector. For every $(m+1)$-sized subset $\tau$ of $[d]$, compute $\max \{p({\tau-\tau_i}) + u_{\tau_i}\}$. If this maximum is unique, attained with index $\tau_i$, then let $\gamma_{\tau,\tau_i}$ be the positive difference between the second maximum and this maximum, and set $v_{\tau_i}=\max\{v_{\tau_i}, \gamma_{\tau,\tau_i}\}$.

Then $v$ gives the difference between $u$ and a closest point of $L_p$. In particular, if $w$ is the point in $L_p$ returned by the Blue Rule, we have
$u = w + v$.
\end{theorem}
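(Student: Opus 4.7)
The plan is to verify $u_i - w_i = v_i$ coordinatewise for every $i \in [d]$, where $w$ is the Blue Rule projection of Theorem~\ref{blue-rule}. Since Theorem~\ref{blue-rule} already certifies $w \in L_p$ as a closest point to $u$, this identity establishes $u = w + v$. The common language is the family of quantities $\alpha_{\tau', k} := p(\tau' \setminus \{\tau_k\}) + u_{\tau_k}$ associated with the $(m+1)$-subsets $\tau' \subseteq [d]$ that appear in both formulas.

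First I would unroll the Blue Rule. For each $(m-1)$-subset $\tau \subseteq [d] \setminus \{i\}$, the index $j = i$ lies in $[d] \setminus \tau$ and contributes $u_i + p(\tau \cup \{i\}) - p(\tau \cup \{i\}) = u_i$ to the inner minimum; consequently $w_i \leq u_i$, and
\begin{equation*}
u_i - w_i \;=\; \min_\tau\,\max_{j \in [d] \setminus \tau}\Bigl\{u_i - u_j + p(\tau \cup \{j\}) - p(\tau \cup \{i\})\Bigr\} \;\geq\; 0,
\end{equation*}
where the term in braces vanishes at $j = i$. For $j \neq i$, setting $\tau' := \tau \cup \{i, j\}$, a direct substitution yields $u_i - u_j + p(\tau \cup \{j\}) - p(\tau \cup \{i\}) = \alpha_{\tau', i} - \alpha_{\tau', j}$. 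The Red Rule, which updates $v_i$ by the gap $\alpha_{\tau', i} - \max_{k \neq i}\alpha_{\tau', k}$ precisely when $i$ is the unique maximizer of $\alpha_{\tau', \cdot}$, therefore yields
\begin{equation*}
v_i \;=\; \max_{\tau' \ni i}\,\max\Bigl\{0,\,\min_{j \in \tau' \setminus \{i\}}\bigl(\alpha_{\tau', i} - \alpha_{\tau', j}\bigr)\Bigr\}.
\end{equation*}
Both values are thus built from the same signed gap $g(\tau', j) := \alpha_{\tau', i} - \alpha_{\tau', j}$ on pairs $(\tau', j)$ with $\tau' \ni i$ and $j \in \tau' \setminus \{i\}$: the Blue Rule side as a min-max, the Red Rule side as a max-min.

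The main obstacle is this min-max/max-min equality, a tropical saddle-point statement. I would derive it from the third Pl\"ucker axiom applied to pairs $(\sigma, \tau')$ where $\sigma$ is an $(m-1)$-subset and $\tau'$ is an $(m+1)$-subset: the guarantee that $\max_k\{p(\sigma \cup \{\tau_k\}) + p(\tau' \setminus \{\tau_k\})\}$ is attained at least twice is the exchange lemma that links refinements $\tau$ with supersets $\tau'$ of a given $(m-1,1)$-pair at $i$. Concretely, given an optimizer $\tau^*$ of the outer min with inner maximizer $j^*$, the Pl\"ucker relation at $\sigma = \tau^*$ and $\tau' = \tau^* \cup \{i, j^*\}$ forces $j^*$ to also be the inner minimizer of the Red Rule expression at $\tau'$, matching the two values. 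A natural inductive route on $m$ begins with the base case $m = 1$, where $L_p$ is a tropical hyperplane and the closed-form projection of \cite{Gartner} makes the identity transparent; the inductive step peels off one coordinate via the exchange relation.
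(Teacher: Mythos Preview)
The paper does not give a proof of this theorem: both the Blue and Red Rules are quoted from \cite[Theorem~15]{JSY} without argument, so there is no in-paper proof to compare against.

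Assessing your proposal on its own, the reformulation via $\alpha_{\tau',k}$ and the observation $w_i\le u_i$ from the $j=i$ term are correct and useful. The difficulty is the ``min-max $=$ max-min'' step, which is not a standard saddle-point identity because the two sides are not indexed by the same product set. On the Blue side the outer minimum runs over $(m-1)$-subsets $\tau\not\ni i$, and for a fixed $\tau$ the inner maximum over $j\notin\tau$ produces a \emph{different} $(m+1)$-set $\tau'=\tau\cup\{i,j\}$ for each $j$; on the Red side the outer maximum runs over $(m+1)$-subsets $\tau'\ni i$ with an inner minimum over $j\in\tau'\setminus\{i\}$. So even the easy inequality $\max\min\le\min\max$ is not available for free here, and equality requires a genuine combinatorial argument that you have not supplied.

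Your appeal to the Pl\"ucker exchange at $(\sigma,\tau')=(\tau^*,\tau^*\cup\{i,j^*\})$ is in the right direction, but the claim that it ``forces $j^*$ to also be the inner minimizer of the Red Rule expression at $\tau'$'' is unsubstantiated: the relation only guarantees that a certain maximum over $k$ is attained twice, and extracting from this the specific conclusion about $j^*$ being the Red-side minimizer needs additional work you have not indicated. The closing sentence about induction on $m$ is a second, independent plan rather than a completion of the first; the base case $m=1$ is indeed routine, but no inductive mechanism is even sketched. As written, the proposal correctly isolates the quantity to be matched but leaves the decisive combinatorial step as an unproved assertion.
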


\begin{example}
\label{projection-example}
The projection from a point $u=(0,7,0,1) \in\mathbb R^4/\mathbb R {\bf 1}$ to the Stiefel tropical linear space in Example \ref{tropical-linear-space-example} with $c=1$ is given by the Blue Rule as
\begin{equation}
w_i := \max_{\tau \neq i} \left\{ \min_{j \neq \tau}  \left[ u_j + p(\{\tau,i\}) - p(\{\tau,j\}) \right] \right\}
= \max_{\tau \neq i} \left\{ p(\{\tau,i\}) + \min_{j \neq \tau}  \left[ u_j - p(\{\tau,j\}) \right] \right\} , \nonumber
\end{equation}
where the second term is independent of $i$. 
As we do not want to repeat the same calculation for different $i$, we define
\begin{equation}
C(\tau) := \min_{j \neq \tau}  \left[ u_j - p(\{\tau,j\}) \right] =
\min_{j \neq \tau}  \left(
\begin{array}{r}
0 - p(\{\tau,1\}) \\
7 - p(\{\tau,2\}) \\
0 - p(\{\tau,3\}) \\
1 - p(\{\tau,4\}) \\
\end{array}
 \right)_j ,
\end{equation}
whose values are
\begin{eqnarray}
C(1) &=&
\min \left(
\begin{array}{r}
7 - p(\{1,2\}) \\
0 - p(\{1,3\}) \\
1 - p(\{1,4\}) \\
\end{array}
\right)
=
\min \left(
\begin{array}{r}
7 - 5 \\
0 - 5 \\
1 - 1 \\
\end{array}
\right)
= -5 , \nonumber \\
C(2) &=&
\min \left(
\begin{array}{r}
0 - p(\{2,1\}) \\
0 - p(\{2,3\}) \\
1 - p(\{2,4\}) \\
\end{array}
\right)
=
\min \left(
\begin{array}{c}
0 - 5 \\
0 - 10\\
1 - 4 \\
\end{array}
\right)
= -10 , \nonumber \\
C(3) &=&
\min \left(
\begin{array}{r}
0 - p(\{3,1\}) \\
7 - p(\{3,2\}) \\
1 - p(\{3,4\}) \\
\end{array}
\right)
=
\min \left(
\begin{array}{c}
0 - 5 \\
7 - 10 \\
1 - 6 \\
\end{array}
\right)
= -5 , \nonumber \\
C(4) &=&
\min \left(
\begin{array}{r}
0 - p(\{4,1\}) \\
7 - p(\{4,2\}) \\
0 - p(\{4,3\}) \\
\end{array}
\right)
=
\min \left(
\begin{array}{r}
0 - 1 \\
7 - 4 \\
0 - 6 \\
\end{array}
\right)
= -6 .
\end{eqnarray}
Thus
\begin{equation}
w_i = 
\max_{\tau \neq i} \left\{ p(\{\tau,i\}) + C(\tau) \right\}
=
\max_{\tau \neq i} \left(
\begin{array}{c}
p(\{1,i\}) -5 \\
p(\{2,i\}) -10 \\
p(\{3,i\}) -5 \\
p(\{4,i\}) -6 \\
\end{array}
 \right)_\tau ,
\end{equation}
that is,
\begin{eqnarray}
w_1 &=&
\max \left(
\begin{array}{c}
p(\{2,1\}) -10\\
p(\{3,1\}) -5\\
p(\{4,1\}) -6\\
\end{array}
\right)
=
\max \left(
\begin{array}{c}
5 - 10 \\
5 - 5 \\
1 - 6 \\
\end{array}
\right)
= 0 , \nonumber \\
w_2 &=&
\max \left(
\begin{array}{r}
p(\{1,2\}) -5\\
p(\{3,2\}) -5\\
p(\{4,2\}) -6\\
\end{array}
\right)
=
\max \left(
\begin{array}{c}
5 - 5 \\
10 - 5 \\
4 - 6 \\
\end{array}
\right)
= 5 , \nonumber \\
w_3 &=&
\max \left(
\begin{array}{c}
p(\{1,3\}) -5\\
p(\{2,3\}) -10\\
p(\{4,3\}) -6\\
\end{array}
\right)
=
\max \left(
\begin{array}{c}
5 - 5 \\
10 - 10 \\
6 - 6 \\
\end{array}
\right)
= 0 , \nonumber \\
w_4 &=&
\max \left(
\begin{array}{c}
p(\{1,4\}) -5\\
p(\{2,4\}) -10\\
p(\{3,4\}) -5\\
\end{array}
\right)
=
\max \left(
\begin{array}{c}
1 - 5 \\
4 - 10 \\
6 - 5 \\
\end{array}
\right)
= 1 .
\end{eqnarray}
So the Blue Rule outputs the vector $(0, 5, 0, 1)$.

The Red Rule constructs a vector $v$ as follows.
First, we begin with $v = (0, 0, 0, 0)$.
Next we take the $3$-sized subset $\tau$ of $[4]$ to redefine the components of $v$.
When $\tau = \{2,3,4\}$, compute
$\max\{p(\{3,4\})+u_2, p(\{2,4\})+u_3, p(\{2,3\})+u_4\} = \max\{6 + 7, 4 + 0, 10 + 1\} = 13$, whose index is $2$ so $v_2 = 13-11=2$.
When $\tau = \{1,3,4\}$, compute
$\max\{p(\{3,4\})+u_1, p(\{1,4\})+u_3, p(\{1,3\})+u_4\} = \max\{6 + 0, 1 + 0, 5 + 1\} = 6$, whose index is $1$ and $4$. As it is tie, $v$ is not renewed.
When $\tau = \{1,2,4\}$, compute
$\max\{p(\{2,4\})+u_1, p(\{1,4\})+u_2, p(\{1,2\})+u_4\} = \max\{4 + 0, 1 + 7, 5 + 1\} = 8$, whose index is $2$ so $v_2 = 8-6=2$.
When $\tau = \{1,2,3\}$, compute
$\max\{p(\{2,3\})+u_1, p(\{1,3\})+u_2, p(\{1,2\})+u_3\} = \max\{10 + 0, 5 + 7, 5 + 0\} = 12$, whose index is $2$ so $v_2 = 12 - 10 = 2$.
Hence the output vector is $ v = (0,2,0,0)$.

The statement of the Theorem \ref{red-rule} that $u = w + v$ holds clearly.
\end{example}

\begin{remark}
For simplicity, when $\tau$ contains only one element, we treat it as a positive integer instead of a set.
\end{remark}

We write $\pi_{L(A)}$ as the projection function which takes a point $u\in \RR^d/\mathbb R {\bf 1}$ and returns the nearest point $w\in L(A)$ given by the Blue Rule.
Depending on the size of $m$ (i.e., the number of rows of $A$), we may prefer to use either the Blue Rule or the Red Rule to compute $\pi_{L(A)}(u)$. 
If $m$ is relatively small, then we can compute $\pi_{L(A)}(u)$ naively with the Blue Rule in $O(d^{m+1})$ time { of operations}. If $m$ is relatively large, conversely, then we can use the Red Rule to compute the projection in $O(m\cdot (d/m)^{m+1})$ time { of operations}. In practice, we note that most of the permutations considered in the Red and Blue Rules do not seem to affect the computation; {There is a faster algorithm than Red Rule and Blue Rule to compute a projection onto a tropical linear space by Theorem 2 in \cite{MR3047018}.  However, in this research we only considered Red Rule and Blue Rule.}

\section{{Best-fit} Stiefel Tropical Linear Space}\label{sec:principal}
In analogy with the classical PCA, the $(m-1)$-th tropical PCA in \cite{YZZ} minimizes the sum of the tropical distances between the data points and their projections onto a best-fit Stiefel tropical linear space of dimension $m-1$, defined by a tropical matrix of size $m\times d$.

\begin{definition}[{Best-fit} Stiefel Tropical Linear Space]
Suppose we have a sample $\mathcal{S} = \{x^{(1)}, \ldots , x^{(n)}\} \subset \RR^d /\mathbb R{\bf 1}$.  
Let $A$ be a tropical matrix of size $m\times d$ with $d>m$, and let $L(A)$ be the Stiefel tropical linear space of $A$. If $L(A)$ minimizes 
\[
\sum_{i=1}^n d_{\rm tr} \bigl(x^{(i)}, \pi_{L(A)}({x}^{(i)})\bigr),
\]
then we say $L(A)$ is a $(m-1)$-dimensional {\em {best-fit} Stiefel tropical linear space} of $\mathcal{S}$. Here we recall that $\pi_{L(A)}({x}^{(i)})$ is the projection of $x^{(i)}$ onto the Stiefel tropical linear space $L(A)$ for $i = 1, \ldots n$.
\end{definition}

\begin{example}
In the case of $m=2$ and $d=3$, the Stiefel tropical linear space becomes a hyperplane
as shown in Example \ref{tropical-hyperplane-example}.
For the sample $\mathcal{S} = \{x^{(1)}, \ldots , x^{(8)}\} \subset \RR^3 /\mathbb R{\bf 1}$ in Fig. \ref{pic_contour}(left), the best fit hyperplane according to the numerical calculation in Fig. \ref{pic_contour}(middle) has the normal vector $\omega=(0,0)$, which is the coordinate of the apex (hinge point).
\label{ex_contour}
\end{example}
\begin{figure}[!ht]
\centering
\includegraphics[width=1.4in]{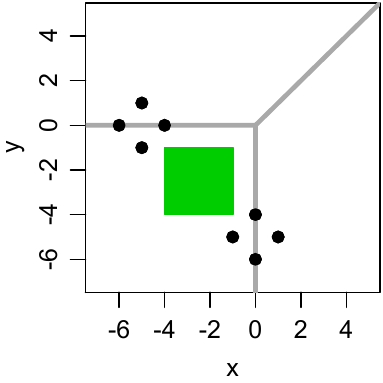} ~
\includegraphics[width=1.4in]{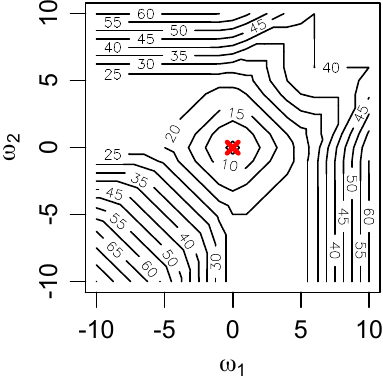} ~
\includegraphics[width=1.4in]{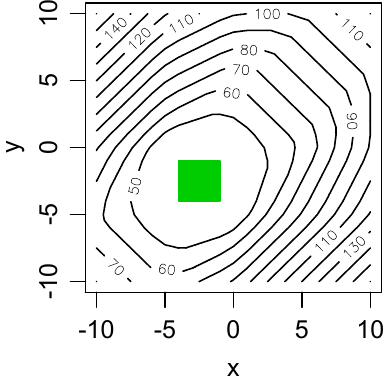} ~ ~ 
\includegraphics[width=1.39in]{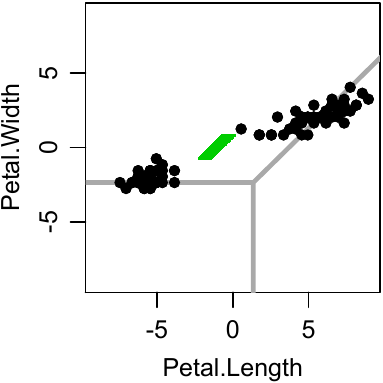} ~ ~
\includegraphics[width=1.4in]{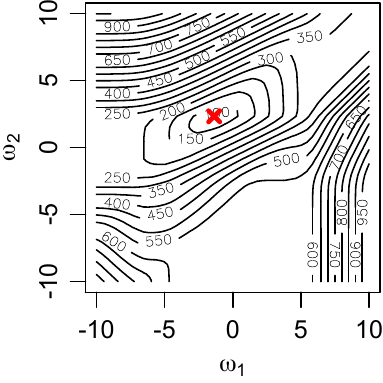} ~ ~
\includegraphics[width=1.4in]{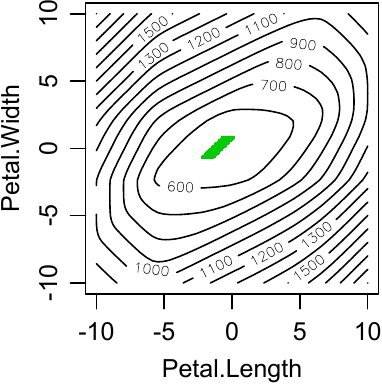} ~ ~
\caption{(left) The best-fit hyperplane (gray) and the Fermat-Weber points (green) for the eight points (top) and iris data (bottom, only Setosa and Versicolor used). 
(middle) The contour plots of the cost function for the tropical PCA with minimum pointed by the red cross for the eight points (top) and iris data (bottom).
(right) The contour plot of the cost function for the Fermat-Weber point (green) for the eight points (top) and iris data (bottom).
}
\label{pic_contour}
\end{figure}

\begin{definition}[Fermat-Weber Point]
Suppose we have a sample $\mathcal{S} = \{x^{(1)}, \ldots , x^{(n)}\} \subset \RR^d /\mathbb R{\bf 1}$.   A {\em Fermat-Weber point} $x^*$ of $\mathcal{S}$ is defined as:
\[
x^* := \argmin_{z \in \RR^d /\mathbb R{\bf 1}}\sum_{i = 1}^n d_{\rm tr}(z, x^{(i)}).
\]
\end{definition}

\begin{remark}
Under the tropical metric $d_{\rm tr}$, a Fermat-Weber point is not unique \citep{LY}.
\end{remark}

\begin{remark}
A Fermat-Weber point is a 0-{dimensional best-fit} Stiefel tropical linear space of a sample with respect to the tropical metric over the tropical projective torus $\mathbb R^d/\mathbb R {\bf 1}$.
\end{remark}

\begin{example}
The Fermat-Weber points for Example \ref{ex_contour} is all the points in the green region in Figure \ref{pic_contour}(left) according to the numerical calculation in Figure \ref{pic_contour}(right).
\end{example}

\section{Gaussian distribution fitted by Stiefel tropical linear spaces over $\mathbb{R}^d/\mathbb{R}{\bf 1}$}\label{sec:oneGaussian}

\subsection{Best-fit tropical hyperplanes}
As a simple special case of the tropical PCA, we begin with a sample $\mathcal{S} = \{X_1, \ldots, X_d\}$ from a single uncorrelated Gaussian, i.e., $X_i \sim N((0,\ldots,0), \sigma \mathbb{I}_{d \times d})$, where $\mathbb{I}_{d \times d}$ is the identity matrix and $\sigma > 0$, as well as its best-fit hyperplane.
The first goal is to show that the best-fit hyperplane as $\sigma \to 0$ is the one whose apex is located at the center of the Gaussian.

\begin{lemma}\label{2DG}
Let $X_1, X_2, X_3 \sim N(0,\sigma^2)$.
Then the mean tropical distances in $\mathbb R^3/\mathbb R {\bf 1}$ from $(X_1,X_2,X_3)$
to the tropical hyperplane $H_0$ and the tropical line
consisting of $(0,0,z)$ for $z \in \mathbb R$ are given by $\frac{3}{2\sqrt{\pi}}\sigma$ and $\frac{2}{\sqrt{\pi}}\sigma$.
\end{lemma}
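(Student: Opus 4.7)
The plan is to derive both expectations by first obtaining closed-form expressions for the tropical distances in terms of the sample $(X_1,X_2,X_3)$, and then taking means using standard facts about Gaussian order statistics. Once the distance formulas are in hand, both computations are elementary.

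For the tropical hyperplane $H_0$, I will apply Theorem \ref{blue-rule} (the Blue Rule) to the Pl\"ucker coordinates of Example \ref{tropical-hyperplane-example} specialised to $\omega_1=\omega_2=0$. A direct application shows that the projection of $u=(X_1,X_2,X_3)$ onto $H_0$ is obtained by lowering the maximum coordinate to the value of the second-largest one. Writing $X_{(1)}\geq X_{(2)}\geq X_{(3)}$ for the order statistics of $(X_1,X_2,X_3)$, inserting the projection into Definition \ref{tropdist} gives
\[
d_{\rm tr}(u,H_0)=X_{(1)}-X_{(2)}.
\]
The same formula can also be extracted from Corollary 2.3 of \cite{Gartner} cited in the excerpt. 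Scaling out $\sigma$, the mean equals $\sigma\,E[Z_{(1)}-Z_{(2)}]$ for $Z_1,Z_2,Z_3$ i.i.d.\ $N(0,1)$. By symmetry the sample median satisfies $E[Z_{(2)}]=0$, and the classical identity $E[Z_{(1)}]=\tfrac{3}{2\sqrt{\pi}}$ for the maximum of three standard normals then yields the first mean $\tfrac{3}{2\sqrt{\pi}}\sigma$.

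For the line $L=\{(0,0,z):z\in\mathbb R\}$, Definition \ref{tropdist} applied to $u$ and $(0,0,z)$, followed by the substitution $t=X_3-z$, reduces the distance to
\[
d_{\rm tr}(u,L)=\min_{t\in\mathbb R}\bigl[\max(X_1,X_2,t)-\min(X_1,X_2,t)\bigr]=|X_1-X_2|,
\]
the minimum being attained whenever $t\in[\min(X_1,X_2),\max(X_1,X_2)]$. Since $X_1-X_2\sim N(0,2\sigma^2)$, its absolute value has mean $\sigma\sqrt{2}\cdot\sqrt{2/\pi}=\tfrac{2}{\sqrt{\pi}}\sigma$, giving the second claim.

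The main obstacle is justifying the projection formula onto $H_0$; everything else is an elementary Gaussian computation. If one wishes to avoid invoking the Blue Rule, it is equally easy to minimise $d_{\rm tr}(u,y)$ directly over $y\in H_0$ using the piecewise-linear structure of $d_{\rm tr}$ and the definition of $H_0$ as the locus where $\max(y_1,y_2,y_3)$ is attained at least twice; this reproduces the same ``replace the max by the second max'' prescription.
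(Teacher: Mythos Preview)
Your argument is correct, and it takes a genuinely different route from the paper's own proof. The paper proceeds by an explicit change of variables $Y_1=\frac{(X_1-X_3)+(X_2-X_3)}{\sqrt 2}$, $Y_2=\frac{-(X_1-X_3)+(X_2-X_3)}{\sqrt 2}$, and then evaluates the expectation for $H_0$ as a sum of double integrals over regions of the $(y_1,y_2)$-plane, obtaining $\tfrac{3}{2\sqrt\pi}\sigma$ after combining three elementary integrals. You instead identify $d_{\rm tr}(u,H_0)=X_{(1)}-X_{(2)}$ up front (via the Blue Rule or the Corollary of \cite{Gartner}) and then read off the mean from the Gaussian order-statistic identities $E[Z_{(2)}]=0$ and $E[Z_{(1)}]=\tfrac{3}{2\sqrt\pi}$. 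Your approach is shorter and more conceptual, and it connects cleanly with the paper's later Theorem~\ref{tm:hyperplane-3d}, where the ``max minus second max'' description reappears in the general $d$ setting; the paper's integral computation, on the other hand, is fully self-contained and does not appeal to an external order-statistic fact. For the tropical line the two arguments coincide: both reduce to $|X_1-X_2|$ and then apply the half-normal mean.
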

\begin{proof}
As $(X_1,X_2,X_3) = (X_1-X_3,X_2-X_3,0)$ in $\mathbb R^3/\mathbb R {\bf 1}$,
we define new coordinates as
\begin{equation}
\left\{ \,
    \begin{aligned}
    & Y_1 := \frac{(X_1-X_3)+(X_2-X_3)}{\sqrt{2}}\\
    & Y_2 := \frac{-(X_1-X_3)+(X_2-X_3)}{\sqrt{2}}
    \end{aligned}
\right. ~ ~ ~ \textrm{, whose covariances are} ~ ~
\Sigma_Y = \left(
\begin{array}{cc}
3\sigma^2 & 0 \\
0 & \sigma^2 \\
\end{array}
\right) . \nonumber
\end{equation}
%
Then, due to the symmetry in integration, the mean tropical distance to $H_0$ is given by
\begin{eqnarray}
& & \int_{-\infty}^\infty \int_{-\infty}^\infty d_{\rm tr}((\frac{y_1-y_2}{\sqrt{2}},\frac{y_1+y_2}{\sqrt{2}},0),H_0) \frac{1}{2\pi\sqrt{3}\sigma^2}e^{-\frac{y_1^2}{2(3\sigma^2)}}e^{-\frac{y_2^2}{2\sigma^2}} dy_1 dy_2 \nonumber \\
&=& 2 \int_0^\infty dy_2 \int_{-\infty}^\infty dy_1 d_{\rm tr}((\frac{y_1-y_2}{\sqrt{2}},\frac{y_1+y_2}{\sqrt{2}},0),H_0) \frac{1}{2\pi\sqrt{3}\sigma^2}e^{-\frac{y_1^2}{2(3\sigma^2)}}e^{-\frac{y_2^2}{2\sigma^2}} \nonumber \\
&=& 2 \int_0^\infty \int_0^\infty [ d_{\rm tr}((\frac{y_1-y_2}{\sqrt{2}},\frac{y_1+y_2}{\sqrt{2}},0),H_0) + d_{\rm tr}((\frac{-y_1-y_2}{\sqrt{2}},\frac{-y_1+y_2}{\sqrt{2}},0),H_0)] \nonumber\\ 
&\times & \frac{1}{2\pi\sqrt{3}\sigma^2}e^{-\frac{y_1^2}{2(3\sigma^2)}}e^{-\frac{y_2^2}{2\sigma^2}} dy_1 dy_2\nonumber \\
&=& { 2 (\int_0^\infty \int_0^{y_1} + \int_0^\infty \int_{y_1}^\infty) [ d_{\rm tr}((\frac{y_1-y_2}{\sqrt{2}},\frac{y_1+y_2}{\sqrt{2}},0),H_0) + d_{\rm tr}((\frac{-y_1-y_2}{\sqrt{2}},\frac{-y_1+y_2}{\sqrt{2}},0),H_0)] }\nonumber\\ 
&\times & { \frac{1}{2\pi\sqrt{3}\sigma^2}e^{-\frac{y_1^2}{2(3\sigma^2)}}e^{-\frac{y_2^2}{2\sigma^2}} dy_2 dy_1 } \nonumber \\
&=& { 2\int_0^\infty \int_0^{y_1} [\frac{y_2+y_1}{\sqrt{2}}]
\frac{1}{2\pi\sqrt{3}\sigma^2}e^{-\frac{y_1^2}{2(3\sigma^2)}}e^{-\frac{y_2^2}{2\sigma^2}} dy_2 dy_1 } \nonumber\\ 
&+& { 2\int_0^\infty \int_{y_1}^\infty [\frac{2 y_2}{\sqrt{2}}]
\frac{1}{2\pi\sqrt{3}\sigma^2}e^{-\frac{y_1^2}{2(3\sigma^2)}}e^{-\frac{y_2^2}{2\sigma^2}} dy_2 dy_1 } \nonumber \\
&=& 2 \int_0^\infty \int_0^\infty \sqrt{2}y_2 \frac{1}{2\pi\sqrt{3}\sigma^2}e^{-\frac{y_1^2}{2(3\sigma^2)}}e^{-\frac{y_2^2}{2\sigma^2}} dy_1 dy_2\nonumber \\
&+&
2 \int_0^\infty \int_0^{y_1} \frac{y_1 - y_2}{\sqrt{2}} \frac{1}{2\pi \sqrt{3}\sigma^2}e^{-\frac{y_1^2}{2(3\sigma^2)}}e^{-\frac{y_2^2}{2\sigma^2}} dy_1 dy_2 \nonumber \\
&=& \frac{3}{2\sqrt{\pi}}\sigma . \nonumber
\end{eqnarray}
There we used
\begin{equation}
\int_0^\infty \int_0^\infty y_2 \frac{1}{2\pi\sqrt{3}\sigma^2}e^{-\frac{y_1^2}{2(3\sigma^2)}}e^{-\frac{y_2^2}{2\sigma^2}} dy_1 dy_2
= \frac{1}{2}\frac{\sigma}{\sqrt{2\pi}}, \nonumber
\end{equation}
\begin{eqnarray}
\int_0^\infty \int_0^{y_1} y_1 \frac{1}{2\pi \sqrt{3}\sigma^2} e^{-\frac{y_1^2}{2(3\sigma^2)}}e^{-\frac{y_2^2}{2\sigma^2}} dy_1 dy_2
&=& \frac{3\sqrt{2}\sigma}{8\sqrt{\pi}}, \nonumber
\end{eqnarray}
and
\begin{eqnarray}
\int_0^\infty \int_0^{y_1} y_2 \frac{1}{2\pi \sqrt{3}\sigma^2} e^{-\frac{y_1^2}{2(3\sigma^2)}}e^{-\frac{y_2^2}{2\sigma^2}} dy_1 dy_2
&=& \frac{\sqrt{2}\sigma}{8\sqrt{\pi}}. \nonumber
\end{eqnarray}

As $\min_z d_{\rm tr}((X_1,X_2,X_3),(0,0,z)) = d_{\rm tr}((X_1,X_2,X_3),(0,0, X_3 - \frac{X_1+X_2}{2})) = \lvert X_2-X_1\rvert$,
the mean tropical distance to the line is, by the symmetry in integration, given by 
\begin{eqnarray}
\nonumber
\int_{-\infty}^\infty \int_{-\infty}^\infty \lvert x_2-x_1 \rvert \frac{1}{2\pi\sigma^2}e^{-\frac{x_1^2+x_2^2}{2\sigma^2}} dx_1 dx_2
&=& \int_0^\infty \int_{x_2}^\infty 8 x_1 \frac{1}{2\pi\sigma^2}e^{-\frac{x_1^2+x_2^2}{2\sigma^2}} dx_2 dx_1\\
&=& \frac{2}{\sqrt{\pi}}\sigma . \nonumber
\end{eqnarray}
\end{proof}

\begin{lemma}
Let $X_1, X_2, X_3 \sim N(0,\sigma^2)$.
Then the mean tropical distances in $\mathbb R^3/\mathbb R {\bf 1}$ from $(X_1,X_2,X_3)$
to the tropical hyperplane $H_0$ and 
to the tropical hyperplane $H_{(0,0,-c)}$ for $c > 0$ divided by $\sigma$
are given by $\frac{3}{2\sqrt{\pi}}$ and $\frac{2}{\sqrt{\pi}}$ as $\sigma \to 0$.
\end{lemma}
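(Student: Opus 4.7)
The first statement follows immediately from Lemma~\ref{2DG}, which already computes the mean tropical distance from $(X_1,X_2,X_3)$ to $H_0$ as exactly $\tfrac{3}{2\sqrt{\pi}}\sigma$; division by $\sigma$ gives the stated constant, independent of $\sigma$.

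For the second statement, my plan is to invoke the explicit distance formula from a point to a tropical hyperplane (Lemma~2.1 of \cite{Gartner}): writing $y_i := x_i + \omega_i$, one has $d_{\rm tr}(x,H_\omega) = y_{(1)} - y_{(2)}$, the gap between the largest and second-largest of the $y_i$. With $x = (X_1,X_2,X_3)$ and $\omega = (0,0,-c)$, the three values are $X_1$, $X_2$, $X_3 - c$. Since $c > 0$ is fixed while $\sigma \to 0$, the entry $X_3 - c$ is typically far below $X_1$ and $X_2$, so the two largest are $X_1$ and $X_2$ and $d_{\rm tr}(X,H_{(0,0,-c)}) = |X_1 - X_2|$ with overwhelming probability. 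Formally, introduce the good event $G := \{X_3 - c \leq \min(X_1,X_2)\}$, on which this identity holds exactly.

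A standard union bound $\{X_3 - \min(X_1,X_2) > c\} \subseteq \{X_3 > c/3\} \cup \{X_1 < -2c/3\} \cup \{X_2 < -2c/3\}$ combined with the Gaussian tail estimate gives $P(G^c) \leq \tfrac{3}{2}\exp\!\bigl(-c^2/(18\sigma^2)\bigr)$, which decays faster than every power of $\sigma$. On $G^c$ the crude bound $d_{\rm tr}(X,H_{(0,0,-c)}) \leq 2(|X_1|+|X_2|+|X_3|+c)$ has $L^2$ norm $O(c)$ as $\sigma\to 0$, so Cauchy--Schwarz yields $E\bigl[d_{\rm tr}(X,H_{(0,0,-c)})\,\mathbf{1}_{G^c}\bigr] = o(\sigma)$ and likewise $E\bigl[|X_1-X_2|\,\mathbf{1}_{G^c}\bigr] = o(\sigma)$. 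Consequently
\[
E\bigl[d_{\rm tr}(X,H_{(0,0,-c)})\bigr] \;=\; E\bigl[|X_1-X_2|\bigr] + o(\sigma),
\]
and since Lemma~\ref{2DG} evaluates $E[|X_1-X_2|] = \tfrac{2\sigma}{\sqrt{\pi}}$ (this is precisely the mean tropical distance to the line $\{(0,0,z)\}$ computed there), dividing by $\sigma$ and sending $\sigma \to 0$ gives $\tfrac{2}{\sqrt{\pi}}$.

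The only nontrivial piece is the tail/moment bookkeeping on $G^c$, but this is routine once one uses that $c > 0$ is fixed while $\sigma \to 0$. Geometrically, the result captures that the Gaussian cloud lies far from the apex $(0,0,c)$ of $H_{(0,0,-c)}$, so projection falls with overwhelming probability onto the branch of the hyperplane corresponding to the half-line $\{(0,0,z): z \leq c\}$, which coincides in the region near the origin with the tropical line of Lemma~\ref{2DG}.
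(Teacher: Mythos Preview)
Your argument is correct and rests on the same geometric insight as the paper's: once $\sigma$ is small compared to the fixed $c>0$, the entry $X_3-c$ drops below $\min(X_1,X_2)$ with overwhelming probability, so the max--second-max formula collapses to $|X_1-X_2|$, whose mean is $\tfrac{2}{\sqrt\pi}\sigma$ by Lemma~\ref{2DG}.

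The packaging, however, differs. The paper rescales by $\sigma$ (so that $c$ becomes $c/\sigma\to\infty$), splits the integration domain at $-c/\sigma$, and argues that only the quadrant $\{x',y'>-c/\sigma\}$ survives in the limit. You instead condition on the good event $G=\{X_3-c\le\min(X_1,X_2)\}$, bound $P(G^c)$ by an explicit Gaussian tail, and use Cauchy--Schwarz to kill the $G^c$ contribution. Your version is arguably more self-contained: it makes the error control quantitative and avoids the somewhat informal passage to the limit inside the integral that the paper's proof leaves implicit. Conversely, the paper's rescaling viewpoint makes the scale-invariance of the problem transparent at a glance. Either route is fine here; yours is the more carefully justified of the two.
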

\begin{proof}
The distance to $H_0$ is given in Lemma \ref{2DG}.
We regard
{ $x^{\prime}_1=\frac{x_1-x_3}{\sigma}$ } and 
{ $x^{\prime}_2=\frac{x_2-x_3}{\sigma}$ }
as random variables, whose 
{ joint probability density function $p(x^{\prime}_1,x^{\prime}_2)$ was shown to be the correlated Gaussian in the proof of Lemma \ref{2DG},}
to get
\begin{eqnarray}
&&\lim_{\sigma \to 0} (\int_{-\infty}^{-c/\sigma} + \int_{-c/\sigma}^\infty) (\int_{-\infty}^{-c/\sigma} + \int_{-c/\sigma}^\infty)\nonumber \\
&\times&\frac{1}{\sigma} d_{\rm tr}((x_1-x_3,x_2-x_3,0),H_{(0,0,-c)})  p(\frac{x_1-x_3}{\sigma},\frac{x_2-x_3}{\sigma})d(\frac{x_1-x_3}{\sigma}) d(\frac{x_2-x_3}{\sigma}) \nonumber \\
&=&\lim_{\sigma \to 0} (\int_{-\infty}^{-c/\sigma} + \int_{-c/\sigma}^\infty) (\int_{-\infty}^{-c/\sigma} + \int_{-c/\sigma}^\infty)
{ d_{\rm tr}((x^{\prime}_1,x^{\prime}_2,0),H_{(0,0,-c/\sigma)}) p(x^{\prime}_1,x^{\prime}_2) d(x^{\prime}_1) d(x^{\prime}_2) } \nonumber \\
&=& \lim_{\sigma \to 0} \int_{-c/\sigma}^\infty \int_{-c/\sigma}^\infty d_{\rm tr}((x^{\prime}_1,x^{\prime}_2,0),H_{(0,0,-c/\sigma)}) p(x^{\prime}_1,x^{\prime}_2) d(x^{\prime}_1) d(x^{\prime}_2) \nonumber \\
&=& \frac{2}{\sqrt{\pi}}. \nonumber
\end{eqnarray}
\end{proof}

It was shown that the mean {of the sum of } distance{s between observations and their projections onto a tropical hyperplane $H_\omega$ for $\omega \in \mathbb R^d \!/\mathbb R {\bf 1}$} takes the minimum { with} $H_0$, i.e., when the center of Gaussian is on the apex of the hyperplane with $d=3$.
We are curious if the same holds for the hyperplanes with general $d$.
Imagine, if the Gaussian center is outside of $H_0$, the distance remains finite ($>0$) even if $\sigma \to 0$.
Thus it suffices to consider the case when the Gaussian center is on $H_0$ (if not exactly on the apex) to find the best-fit hyperplane.
Furthermore, as the definition of $H_0$ is that the maximum of ${x_1, x_2, \ldots, x_d}$ is attained at least twice on it, we only need to separately consider the cases by how many times the maximum is attained, actually.

\begin{theorem}\label{tm:hyperplane-3d}
Let $X_1, X_2, \ldots, X_d \sim N(0,\sigma^2)$. Then, in the limit $\sigma \to 0$, the mean tropical distances divided by $\sigma$ from $(X_1,X_2,\ldots, X_d) \in \mathbb R^{d}/\mathbb R {\bf 1}$ to the tropical hyperplane $H_\omega$, that passes through the origin, only depend on how many times(=$k$-times) the maximum is attained in the defining equations at the origin.
It is the mean tropical distance to the projection to the $k$-dimensional $H_0$ $(=d_{\rm tr}((X_1, \ldots, X_k), H_0))$.
Specifically, when the maximum is attained three times or twice, it is $\frac{3}{2\sqrt{\pi}}$ or $\frac{2}{\sqrt{\pi}}$, respectively.
\end{theorem}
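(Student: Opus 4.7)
The plan is to localize the problem at the apex of $H_\omega$ using Gaussian concentration, and to reduce it to the $k$-dimensional computation already handled by Lemma~\ref{2DG}. By tropical rescaling I may assume $\max_i \omega_i = 0$, and after relabeling that $\omega_1 = \cdots = \omega_k = 0$ and $\omega_j \leq -\delta$ for all $j > k$, where $\delta > 0$ is the gap between the dominant coordinates of $\omega$ and the rest. The explicit formula for the tropical distance from a point to a tropical hyperplane (cited from \cite{Gartner} in the text) states that $d_{\rm tr}((x_1,\ldots,x_d), H_\omega)$ equals the difference between the largest and the second largest of $s_i := \omega_i + x_i$. On the event
\[
B_\sigma := \bigl\{ \max_{j} |X_j| < \delta/2 \bigr\},
\]
every $s_j$ with $j > k$ is at most $-\delta/2$ while every $s_i$ with $i \leq k$ exceeds $-\delta/2$, so the two largest $s_i$'s both come from $\{1,\ldots,k\}$ and the distance coincides with $d_{\rm tr}((X_1,\ldots,X_k), H_0)$ viewed inside $\mathbb{R}^k/\mathbb{R}{\bf 1}$.

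To handle the complement event, the crude bound $d_{\rm tr}((X_1,\ldots,X_d), H_\omega) \leq d_{\rm tr}(X, {\bf 0}) \leq 2\max_i |X_i|$ (using ${\bf 0}\in H_\omega$) combined with the Gaussian tail estimate $\Pr(B_\sigma^c) \leq 2d\exp(-\delta^2/(8\sigma^2))$ and the moment bound $\mathbb{E}[(\max_i|X_i|)^2] = O(\sigma^2 \log d)$ gives, via Cauchy--Schwarz,
\[
\mathbb{E}\bigl[d_{\rm tr}(X, H_\omega)\,{\bf 1}_{B_\sigma^c}\bigr] = O\!\bigl(\sigma \exp(-\delta^2/(16\sigma^2))\bigr).
\]
Dividing by $\sigma$ and sending $\sigma \to 0$, this contribution vanishes, so the sought limit equals $\lim_{\sigma\to 0} \sigma^{-1}\,\mathbb{E}\,d_{\rm tr}((X_1,\ldots,X_k), H_0)$, which clearly depends only on $k$ and not on the particular $\omega$.

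Finally, introducing $Z_i := X_i/\sigma \sim N(0,1)$ and exploiting the degree-one positive homogeneity of the tropical metric on $\mathbb{R}^k/\mathbb{R}{\bf 1}$, one has $d_{\rm tr}((X_1,\ldots,X_k), H_0) = \sigma\, d_{\rm tr}((Z_1,\ldots,Z_k), H_0)$, so the limit is a $\sigma$-independent expectation depending only on $k$. For $k=3$ this expectation is $\frac{3}{2\sqrt{\pi}}$ by Lemma~\ref{2DG}; for $k=2$ the hyperplane $H_0 \subset \mathbb{R}^2/\mathbb{R}{\bf 1}$ collapses to the origin, so the distance equals $|Z_1 - Z_2|$ with $Z_1 - Z_2 \sim N(0,2)$ and expected value $\frac{2}{\sqrt{\pi}}$, consistent with Lemma~\ref{2DG}. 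The main obstacle in executing this plan is the tail-control step: the tropical distance is unbounded on $B_\sigma^c$, so one must verify that the exponentially small Gaussian probability of any excluded index exceeding the gap indeed dominates the merely logarithmic growth of $\mathbb{E}\max_i|X_i|$, uniformly as $\sigma\to 0$.
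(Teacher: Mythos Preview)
Your proposal is correct and follows essentially the same route as the paper: normalize so the $k$ maximal coordinates of $\omega$ are zero, observe that in a neighborhood of the origin only these $k$ coordinates determine the max and second-max of $\omega_i+X_i$, and thereby reduce to the $k$-dimensional $H_0$ computation handled by Lemma~\ref{2DG}. The paper's proof is brief and handles the neighborhood condition only informally (``which is satisfied when $\sigma\to 0$''); your explicit tail control via Cauchy--Schwarz and the Gaussian concentration bound makes rigorous precisely the step the paper leaves heuristic, but the underlying idea is identical.
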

\begin{proof}
When the hyperplane $H_\omega$ passes through the origin, $\max\{\omega_1, \omega_2, \ldots, \omega_d\}$ is attained at least twice ($k$ times).
By changing the coordinates, the condition can be written as
\[ \omega_1 = \omega_2 = \ldots = \omega_k = 0 ~ ~ \textrm{and} ~ ~ \omega_i < 0 ~ ~ \textrm{for} ~ ~ k < i \leq d. \]
\[ 
\begin{array}{rl}
     & d_{\rm tr}((X_1, \ldots, X_k, X_{k+1}, \ldots, X_d), H_\omega) \\
    = & d_{\rm tr}((X_1, \ldots, X_k, X_{k+1}+\omega_{k+1}, \ldots, X_d+\omega_d), H_0)\\
    = & \max_{1 \leq i \leq k} X_i - {\rm 2nd} \max_{1 \leq i \leq k} X_i .
\end{array}
\]
Note that the last equation holds only within the neighborhood of the origin, 
$\lvert x_i\rvert < \lvert\max_{k < j \leq d} \omega_j\rvert /2$ for $1 \leq i \leq d$, which is satisfied when $\sigma \to 0$.
The numerical calculation of the distance is plotted in Figure \ref{pic_dtr_Xk_H0} and the specific cases with $k=2,3$ coincide with Theorem \ref{tm:hyperplane-3d}.
\end{proof}

\begin{remark}
A point on $H_0$ 
can be represented as $x_1=x_2=\ldots=x_k=c$ for some $k \leq d$.
Specifically, the apex is the highest codimensional case where all $x_i$'s are equal $(k=d)$.
Note that { $d_{\rm tr}((X_1, \ldots, X_k, X_{k+1}, \ldots, X_d), H_\omega)$ in $\mathbb R^{d}/\mathbb R {\bf 1}$ is the same as $d_{\rm tr}((X_1, \ldots, X_k), H_0)$ in $\mathbb R^{k}/\mathbb R {\bf 1}$}
because the difference of the first and the second max of $(X_1,X_2,\ldots,X_k)$ is equal to $d_{\rm tr}((X_1, \ldots, X_k), H_0)$.
Thus it suffices to show that the mean distance to $H_0$ decreases with $k$ in Figure \ref{pic_dtr_Xk_H0} to prove that the mean distance takes the minimum for $H_0$, i.e., when the center of Gaussian is on the apex of the hyperplane for general $d$.
\end{remark}

\begin{figure}[!ht]
\centering
\includegraphics[width=1.4in]{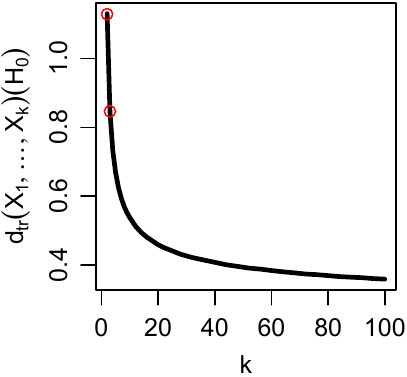} ~ ~ ~
\caption{Monte Carlo calculation of $\mathbb{E} [d_{\rm tr}((X_1, \ldots, X_k), H_0)]$ as a function of dimension $k$ where $X_1, X_2, \ldots, X_k \sim N(0,\sigma^2)$.
The average over $10^6$ realizations suggests that it is monotonically decreasing.
The red circle denotes the theoretical prediction{s} $\frac{2}{\sqrt{\pi}}$ and $\frac{3}{2\sqrt{\pi}}$.}
\label{pic_dtr_Xk_H0}
\end{figure}

\begin{conjecture}
$\mathbb{E} [d_{\rm tr}((X_1, \ldots, X_k), H_0)]$ monotonically decreases with $k$. Thus, the hyperplane that fits $X$ the best as $\sigma \to 0$ converges to $H_0$, i.e. when the apex is at the center of the Gaussian.
\end{conjecture}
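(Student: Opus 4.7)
The plan is to reduce the tropical distance to a statistic about order statistics, obtain a closed-form integral representation, and then reduce the monotonicity claim to an analytic inequality about the standard normal CDF. Since the tropical hyperplane $H_0\subset\mathbb R^k/\mathbb R{\bf 1}$ is the locus where the maximum coordinate is attained at least twice, the explicit distance formula established in Theorem~\ref{tm:hyperplane-3d} specializes to
\begin{equation*}
d_{\rm tr}\bigl((X_1,\ldots,X_k),H_0\bigr)=X_{(k)}-X_{(k-1)},
\end{equation*}
where $X_{(1)}\leq\cdots\leq X_{(k)}$ denote the order statistics. The scaling $X_i\mapsto X_i/\sigma$ reduces the conjecture to showing that $g(k):=\mathbb E[X_{(k)}-X_{(k-1)}]$, computed for the standard normal, is strictly decreasing in $k$.

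Next I would derive a usable closed form for $g(k)$. Starting from $\mathbb E[X_{(k)}-X_{(k-1)}]=\int_{-\infty}^{\infty}\bigl(P(X_{(k-1)}\leq x)-P(X_{(k)}\leq x)\bigr)dx$ and the identities $P(X_{(k)}\leq x)=\Phi(x)^k$ and $P(X_{(k-1)}\leq x)=\Phi(x)^{k-1}\bigl(k-(k-1)\Phi(x)\bigr)$, one obtains
\begin{equation*}
g(k)=k\int_{-\infty}^{\infty}\Phi(x)^{k-1}\bigl(1-\Phi(x)\bigr)\,dx.
\end{equation*}
Subtracting consecutive terms gives the key identity
\begin{equation*}
g(k)-g(k+1)=\int_{-\infty}^{\infty}\Phi(x)^{k-1}\bigl(1-\Phi(x)\bigr)\bigl[k-(k+1)\Phi(x)\bigr]dx,
\end{equation*}
and the substitution $u=\Phi(x)$ converts this into
$\int_0^1 u^{k-1}(1-u)\bigl[k-(k+1)u\bigr]\,q(u)\,du$,
where $q(u):=1/\phi(\Phi^{-1}(u))$ is symmetric under $u\mapsto 1-u$.

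The final step is to show that this integral is strictly positive. The bracket $k-(k+1)u$ vanishes at $u^{*}=k/(k+1)$, is positive on $(0,u^{*})$, and negative on $(u^{*},1)$. Heuristically the positive region has length $k/(k+1)$ while the negative region has length only $1/(k+1)$, which should give a positive balance. However, $q(u)$ blows up at the endpoints: by the Mills ratio $1-\Phi(z)\sim\phi(z)/z$, one has $q(u)\sim 1/[(1-u)\sqrt{2\log(1/(1-u))}]$ as $u\to 1$, so the product $(1-u)q(u)$ is bounded and in fact vanishes at $u=1$. Thus the negative contribution is at most $O(1/(k+1))$, and it remains to bound the positive contribution from below by splitting the interval $(0,u^{*})$ at the mode $u\approx k/(k+2)$ of the density $u^{k-1}(1-u)$ and using $q(u)\geq\sqrt{2\pi}$.

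The main obstacle is precisely this quantitative comparison: both the positive and negative contributions grow with $k$, so asymptotic control alone is insufficient and uniform bounds in $k$ are needed. A possibly cleaner alternative is a coupling argument: realize the $(k+1)$-sample by adjoining one extra independent sample $X_{k+1}$, and decompose the change in the top gap according to whether $X_{k+1}$ falls below $X_{(k-1)}$ (no change), between $X_{(k-1)}$ and $X_{(k)}$ (strict decrease), or above $X_{(k)}$ (potential increase of magnitude $X_{k+1}-2X_{(k)}+X_{(k-1)}$). Integrating $X_{k+1}$ out against the known joint density of $(X_{(k-1)},X_{(k)})$ should reduce the conjecture to verifying that a single one-dimensional integral against the normal density is negative, which avoids the delicate two-scale behaviour of $q$ near the endpoints.
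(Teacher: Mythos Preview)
The paper does not prove this statement: it is explicitly labelled a conjecture, and the only supporting evidence is the Monte Carlo plot in Figure~\ref{pic_dtr_Xk_H0}. There is therefore no proof in the paper to compare against; your proposal already goes considerably further than the authors do.

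Your reductions are correct. The identification $d_{\rm tr}((X_1,\ldots,X_k),H_0)=X_{(k)}-X_{(k-1)}$, the formula $g(k)=k\int_{-\infty}^{\infty}\Phi(x)^{k-1}(1-\Phi(x))\,dx$, and the expression for $g(k)-g(k+1)$ after the substitution $u=\Phi(x)$ all check out. The gap is exactly where you say it is: you set up the integral $\int_0^1 u^{k-1}(1-u)[k-(k+1)u]\,q(u)\,du$ but do not show it is positive, and neither the crude size estimate nor the coupling sketch is carried to a conclusion.

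In fact your integral can be closed directly, and more cheaply than either route you outline. Note the identity
\[
u^{k-1}(1-u)\bigl[k-(k+1)u\bigr]=(1-u)\,\frac{d}{du}\bigl[u^{k}(1-u)\bigr],
\]
so that one integration by parts (boundary terms vanish for $k\ge 2$) gives
\[
g(k)-g(k+1)\;=\;-\int_{0}^{1}u^{k}(1-u)\,\frac{d}{du}\bigl[(1-u)q(u)\bigr]\,du.
\]
But with $z=\Phi^{-1}(u)$ one has $(1-u)q(u)=(1-\Phi(z))/\phi(z)=R(z)$, the Mills ratio of the standard normal, and $R'(z)=zR(z)-1<0$ for all $z$ (trivial for $z\le 0$; for $z>0$ this is the classical bound $1-\Phi(z)<\phi(z)/z$, equivalently the increasing-failure-rate property of the normal). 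Hence $(1-u)q(u)$ is strictly decreasing in $u$, the integrand above is strictly positive on $(0,1)$, and $g(k)>g(k+1)$. Your concern about the blow-up of $q$ near $u=1$ is precisely what gets neutralised by packaging $q$ with the factor $(1-u)$ into the Mills ratio before differentiating.

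The coupling alternative is sound in spirit, but the case $X_{k+1}>X_{(k)}$ forces you to compare $\mathbb{E}[X_{k+1}-X_{(k)}\mid X_{k+1}>X_{(k)}]$ against the old gap, and unwinding that conditional expectation for the normal leads back to essentially the same Mills-ratio inequality; it is not a shortcut.
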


\subsection{Best-fit Stiefel tropical linear spaces}
Next we consider a non-hyperplane Stiefel tropical linear space as a subspace.
In the hyperplane case, we have considered not only the convergence of the mean tropical distance to zero but also its convergece rate.
Along this line, our ultimate goal is to prove the following conjecture.

\begin{conjecture}
Let $X_1, X_2, \ldots, X_d \sim N(0,\sigma^2)$. Then, as $\sigma \to 0$, the expectation of the tropical distance from $(X_1,X_2,\ldots, X_d) \in \mathbb R^{d+1}/\mathbb R {\bf 1}$ to the Stiefel tropical linear space $L_P$ divided by $\sigma$ 
takes the minimum for $P=0$.
\end{conjecture}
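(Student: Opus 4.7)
The plan is to mirror the approach used for Theorem~\ref{tm:hyperplane-3d}, extending from hyperplanes to general Stiefel tropical linear spaces via three steps: forcing $L_P$ to pass through the origin, localizing the Blue Rule at the origin, and comparing the localized problem to the symmetric case $P = 0$ by a monotonicity-in-dimension argument.

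First I would show that any minimizer must satisfy $0 \in L_P$. If not, then by continuity of $d_{\rm tr}(\,\cdot\,, L_P)$ there is a neighborhood $U$ of $0$ on which $d_{\rm tr}(\,\cdot\,, L_P) \geq \delta > 0$; since $(X_1,\ldots,X_d) \in U$ with probability tending to $1$ as $\sigma \to 0$, the quantity $\mathbb{E}[d_{\rm tr}/\sigma]$ diverges, so such a $P$ cannot be a minimizer. Restricting attention to $L_P$ that pass through the origin, the origin lies in a unique open cell of the pure polyhedral complex $L_P$, and in a small neighborhood of $0$ the distance $d_{\rm tr}(\,\cdot\,, L_P)$ agrees with the distance to the tangent cone at $0$. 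The Blue Rule then simplifies, in this neighborhood, to a combinatorial max--min expression whose form depends only on the ``active index set'' $S \subseteq [d]$ determined by which Pl\"ucker-coordinate differences vanish at $0$.

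When $P = 0$ the origin is the apex of $L_P$ and $S = [d]$; the Blue Rule projection reduces, as in the proof of Theorem~\ref{tm:hyperplane-3d}, to a functional of the top-$m$ order statistics of $(X_1,\ldots,X_d)$. For any other $P$ with $0 \in L_P$, the active set is a proper subset $S \subsetneq [d]$, and the local problem becomes the same functional applied to the sub-sample $(X_i)_{i \in S}$. Because the isotropic Gaussian is permutation-invariant, the expected local distance depends only on $|S|$, so the conjecture reduces to showing that this expectation is strictly decreasing in $|S|$.

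The main obstacle is precisely this monotonicity statement, which generalizes the conjecture stated just after Theorem~\ref{tm:hyperplane-3d} (currently only supported by the Monte Carlo evidence in Figure~\ref{pic_dtr_Xk_H0}). I expect the most promising route is to write the local functional explicitly as an integral over Gaussian order statistics and to deduce monotonicity from a coupling argument or an interlacing inequality for such order statistics. A secondary subtlety is the non-uniqueness of the Blue Rule projection noted in the remark after Theorem~\ref{blue-rule}: one must verify that the local reduction does not depend on the branch of closest point chosen, so that the stratum-wise expected distance is well defined as a function of~$P$.
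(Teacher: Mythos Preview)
This statement is presented in the paper as a \emph{conjecture}, not a theorem; the authors explicitly write that ``for the general Stiefel tropical linear space, it is hard to consider the convergence rate'' and then abandon the minimization question, proving only that the expected distance tends to zero for particular choices of $P$ (Theorems~\ref{thm:G1}--\ref{thm:G1bcor}). There is therefore no proof in the paper to compare against; your proposal is an attempt to close something the paper leaves open.

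On its own merits, the first step (forcing $0\in L_P$) is sound, and you correctly isolate the essential obstruction as a monotonicity-in-$|S|$ statement, which already in the hyperplane case is the open conjecture immediately preceding this one. The genuine gap is the middle reduction. Your claim that for any $P$ with $0\in L_P$ the localized distance is ``the same functional applied to the sub-sample $(X_i)_{i\in S}$'' is asserted, not argued, and is almost certainly too coarse. In the hyperplane case this reduction is precisely the content of Theorem~\ref{tm:hyperplane-3d}, and even there it required an explicit computation exploiting that the local picture of $H_\omega$ at $0$ is again a hyperplane (in fewer coordinates). For a Stiefel tropical linear space of dimension $m-1<d-2$, the tangent cone of $L_P$ at a non-apex point is the Bergman fan of a matroid minor that need not be uniform, so the localized Blue Rule will not in general return $\max_{i\in S}X_i$ minus a fixed order statistic of $(X_i)_{i\in S}$. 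You would need to classify the possible local cones of $L_P$ at $0$ and compute the expected distance for each type, not just index them by a single subset $S$; without that, the problem does not collapse to a one-parameter monotonicity question, and the conjecture remains open for exactly the reasons the paper indicates.
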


However, for the general Stiefel tropical linear space, it is hard to consider the convergence rate {exactly, although we can give its upper bound}.
Therefore, we {mostly focus on} the convergence although the minimizers whose mean distance goes to zero as $\sigma \to 0$ is not unique in general.
In what follows, we begin with a specific example of the (non-hyperplane) Stiefel tropical linear space for which the projection distance goes to zero as $\sigma \to 0$.
We end this section with a discussion on the non-uniqueness of the minimizer by showing that the mean distance goes to zero as $\sigma \to 0$ when a Stiefel tropical linear space  passes through the center of the Gaussian.



\begin{lemma}\label{lm:pluckerG1}
The Pl\"ucker coordinates of the Stiefel tropical linear space associated with the $2 \times d$ matrix,
\begin{equation}\label{mat:1}
A_1 = \left(
\begin{matrix}
\mu_1 & -\infty & 0 & \ldots & 0\\
-\infty & \mu_2& 0 & \ldots & 0
\end{matrix}
\right).
\end{equation}
are
\[p(\{1,2\})= \mathrm{tdet} \left(\begin{matrix} \mu_1 &  -\infty \\ -\infty & \mu_2 \end{matrix}\right)=\mu_1 + \mu_2,\]
\[p(\{1,i\})=\mathrm{tdet}\left(\begin{matrix} \mu_1 &  0 \\ -\infty & 0 \end{matrix}\right)=\mu_1,\]
for $i = 3, \ldots , d$,
\[p(\{2,i\})=\mathrm{tdet}\left(\begin{matrix}-\infty &  0\\\mu_2 & 0 \end{matrix}\right)=\mu_2,\]
for $i = 3, \ldots , d$, and
\[p(\{i,j\})=\mathrm{tdet}\left(\begin{matrix}0 &  0\\0 & 0 \end{matrix}\right)=0,\]
for $i \not = j$ such that $i = 3, \ldots , d$ and $j = 3, \ldots , d$. 
\end{lemma}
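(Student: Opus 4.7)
The plan is essentially a direct application of Definition \ref{stiefel-tropical-linear-space}, which says that for a $2\times d$ matrix the Plücker coordinate $p_{A_1}(\omega)$ equals $\tdet(A_{1,\omega})$, where $A_{1,\omega}$ is the $2\times 2$ submatrix on columns indexed by $\omega$. For a $2\times 2$ submatrix $\bigl(\begin{smallmatrix} a & b \\ c & d \end{smallmatrix}\bigr)$ the tropical determinant is simply $\max\{a+d,\,b+c\}$, and the convention $-\infty + x = -\infty$ kills any permutation product containing a $-\infty$ entry.

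I would enumerate the four possible shapes of a 2-subset $\omega\subseteq [d]$. First, $\omega=\{1,2\}$ selects the two distinguished columns and gives the diagonal-dominant submatrix; the off-diagonal permutation contributes $-\infty$, so $\tdet = \mu_1+\mu_2$. Second, $\omega=\{1,i\}$ for $i\geq 3$ picks up column 1 (with $\mu_1$ on top, $-\infty$ below) and a $(0,0)^{T}$ column; one of the two permutation products is $-\infty$ and the other is $\mu_1+0=\mu_1$. Third, $\omega=\{2,i\}$ for $i\geq 3$ is symmetric and yields $\mu_2$. Fourth, $\omega=\{i,j\}$ with $3\leq i\neq j\leq d$ selects two all-zero columns, so both permutation products equal $0$ and $\tdet = 0$.

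Since these four families exhaust the 2-subsets of $[d]$, the lemma follows. There is no real obstacle here: the entire argument is bookkeeping with the definition of $\tdet$, and the only subtle point to flag is the convention about arithmetic with $-\infty$ (namely $-\infty + x = -\infty$ in the max-plus semiring), which is what eliminates one of the two permutation terms in each of the first three cases.
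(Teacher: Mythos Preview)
Your proposal is correct and is exactly the direct computation the paper has in mind; indeed, the paper does not give a separate proof for this lemma because the tropical determinants of the four types of $2\times2$ minors are already written out in the statement. Your only addition is making explicit the convention $-\infty + x = -\infty$, which is appropriate and matches the paper's conventions in Section~\ref{sec:basics}.
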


For the purpose to unify the notation in the following proofs, we entirely use the indicator function for $j \in \mathbb{N}$ with any fixed $m \in \mathbb{N}$,
\[
\textrm{I}_{j\leq m}:= \begin{cases}1 & \text { if } j\leq m \\ 0 & \text { if } j> m\end{cases},
\]
and the Kronecker delta for $i, j \in \mathbb{N}$,
\[
\delta_{i j} := \begin{cases}0 & \text { if } i \neq j \\ 1 & \text { if } i=j .\end{cases}
\]

\begin{lemma}\label{lm:pluckerG2}
Suppose $X = (\mu_1 + \epsilon_1, \mu_2 + \epsilon_2, \epsilon_3, \ldots , \epsilon_d) \in \mathbb{R}^d/\mathbb{R}{\bf 1}$ where $\mu_1, \mu_2, \epsilon_j \in \mathbb{R}$ 
for $j = 1, \ldots , d$.
Then the projected point $X' \in \mathbb{R}^d/\mathbb{R}{\bf 1}$ of $X$ onto the Stiefel tropical linear space of the matrix \eqref{mat:1} is
\[
X' = (\mu_1 + \min\{\epsilon^*,\epsilon_1\}, \mu_2 + \min\{\epsilon^*,\epsilon_2\}, \min\{\epsilon^*,\epsilon_3\}, \ldots , \min\{\epsilon^*,\epsilon_d\}) ,
\]
where $\epsilon^*$ is the second smallest value in $\{\epsilon_1, \ldots , \epsilon_d\}$.
\end{lemma}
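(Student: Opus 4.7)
The plan is to apply the Blue Rule (Theorem \ref{blue-rule}) directly and show that, under the Pl\"ucker coordinates in Lemma \ref{lm:pluckerG1}, the computation collapses to a simple order-statistic identity. With $m=2$ the rule reads
\[
w_i = \max_{k \in [d] \setminus \{i\}}\; \min_{j \in [d] \setminus \{k\}} \bigl\{X_j + p(\{k,i\}) - p(\{k,j\})\bigr\},
\]
where the range of $j$ crucially includes $j=i$, so that $X_i$ itself appears as a baseline inside the minimum.

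The key step is to show that every inner summand simplifies to $\mu_1 \textrm{I}_{i=1} + \mu_2 \textrm{I}_{i=2} + \epsilon_j$. To this end, I would first rewrite Lemma \ref{lm:pluckerG1} uniformly as $p(\{k,\ell\}) = \mu_1 \textrm{I}_{1 \in \{k,\ell\}} + \mu_2 \textrm{I}_{2 \in \{k,\ell\}}$, and use the hypothesis $X_j = \mu_1 \textrm{I}_{j=1} + \mu_2 \textrm{I}_{j=2} + \epsilon_j$. Substituting, the coefficient of $\mu_1$ in the summand is $\textrm{I}_{j=1} + \textrm{I}_{1 \in \{k,i\}} - \textrm{I}_{1 \in \{k,j\}}$. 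A short case split (on whether $k=1$, whether $i=1$, and whether $j=1$, subject to $k \neq i$ and $j \neq k$) shows that this expression always equals $\textrm{I}_{i=1}$; the analogous statement for $\mu_2$ follows by symmetry.

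With this simplification the Blue Rule becomes
\[
w_i = \mu_1 \textrm{I}_{i=1} + \mu_2 \textrm{I}_{i=2} + \max_{k \neq i}\; \min_{j \neq k}\; \epsilon_j,
\]
so the content of the lemma reduces to the identity $\max_{k \neq i} \min_{j \neq k} \epsilon_j = \min\{\epsilon^*, \epsilon_i\}$. Let $\epsilon_{(1)} \leq \epsilon_{(2)} = \epsilon^*$ denote the two smallest order statistics. For fixed $k$, $\min_{j \neq k} \epsilon_j$ equals $\epsilon^*$ exactly when $k$ is the unique index achieving $\epsilon_{(1)}$, and equals $\epsilon_{(1)}$ otherwise. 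If $\epsilon_i < \epsilon^*$ then $i$ must be that unique minimizing index, so no admissible $k \neq i$ removes the minimum and the maximum equals $\epsilon_i$; if $\epsilon_i \geq \epsilon^*$ then some $k \neq i$ attains $\epsilon_{(1)}$ (or $\epsilon_{(1)}=\epsilon_{(2)}$, which is harmless), and choosing this $k$ achieves the upper bound $\epsilon^*$. In both cases the answer is $\min\{\epsilon^*, \epsilon_i\}$.

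The main obstacle is the algebraic simplification in the second paragraph. A naive enumeration quickly branches into many subcases based on whether each of $i, j, k$ lies in $\{1,2\}$ or in $\{3, \ldots, d\}$, and some of these subcases are ruled out only implicitly by the constraints $k \neq i$ and $j \neq k$. Packaging Lemma \ref{lm:pluckerG1} via the indicator-function formula above bypasses this bookkeeping and makes the cancellation visible in a single line of algebra.
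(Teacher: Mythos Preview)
Your proposal is correct and follows essentially the same approach as the paper's own proof: both rewrite the Pl\"ucker coordinates and the data point via indicator functions (the paper uses $\mu_j\,\textrm{I}_{j\le 2}$ where you use $\mu_1\textrm{I}_{j=1}+\mu_2\textrm{I}_{j=2}$), observe that the $\mu$-terms cancel to leave $\mu_i\textrm{I}_{i\le 2}+\max_{\tau\ne i}\min_{j\ne \tau}\epsilon_j$, and then identify this max--min as $\min\{\epsilon^*,\epsilon_i\}$. Your write-up is slightly more explicit about the indicator cancellation and about ties in the order-statistic step, but the argument is the same.
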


\begin{proof} \,\,\,\,
By using the indicator function, we can unify the notation as
\[
X_j = \mu_j \textrm{I}_{j \leq 2} + \epsilon_j,
\]
and, by Lemma \ref{lm:pluckerG1},
\[
p(\{\tau, i\}) = \mu_\tau \textrm{I}_{\tau \leq 2} + \mu_i \textrm{I}_{i \leq 2} .
\]
Then, the Blue Rule becomes
\[
X'_i = \max_{\tau \neq i} \min_{j \neq \tau} (X_j+p(\{\tau, i\}) - p(\{\tau, j\})) = \mu_i \textrm{I}_{i \leq 2} + \max_{\tau \neq i} \min_{j \neq \tau} \epsilon_j.
\]
Suppose $\epsilon_{i_\textrm{min}}$ reaches the smallest value in $\{\epsilon_1, \ldots , \epsilon_d\}$, then
\begin{equation*}
X'_i = 
\begin{cases}
\mu_i \textrm{I}_{i \leq 2} + \epsilon_{i_\textrm{min}} & i=i_\textrm{min} \\
\mu_i \textrm{I}_{i \leq 2} + \epsilon^* & i\neq i_\textrm{min} 
\end{cases}
\end{equation*}
\end{proof}

\begin{remark} 
For example, if $\epsilon_{1}\leq\epsilon_{2}\leq\ldots\leq\epsilon_{d}$, then the second smallest value in $\{\epsilon_1, \ldots , \epsilon_d\}$ is $\epsilon_{2}$. The second smallest value in $\{2,2,1\}$ is 2, and the second smallest value in $\{2,1,1\}$ is 1.
\end{remark}

\begin{theorem}\label{thm:G1}
Suppose $X = (\mu_1 + \epsilon_1, \mu_2 + \epsilon_2, \epsilon_3, \ldots , \epsilon_d) \in \mathbb{R}^d/\mathbb{R}{\bf 1}$ such that
$\mu_1, \mu_2 \in \mathbb{R}$ and
$\epsilon_j \sim N(0, \sigma)$ for $j = 1, \ldots , d$.
Let $X' \in \mathbb{R}^d/\mathbb{R}{\bf 1}$ be the projected point of $X$ onto the one-dimensional Stiefel tropical linear space of the matrix \eqref{mat:1}.
Then the tropical distance between $X$ and $X'$ is
\[
d_{\rm tr}(X, X') = \max_{1\leq i\leq d}(\epsilon_i - \epsilon^*),
\]
where $\epsilon^*$ is the second smallest value in $\{\epsilon_1, \ldots , \epsilon_d\}$,
and its expected value satisfies
\[
\mathbb{E} [d_{\rm tr}(X, X')] \leq 2\sigma \sqrt{2\log(d)}.
\]
Specifically,
\[
\lim_{\sigma \to 0} \mathbb{E} [d_{\rm tr}(X, X')] = 0.
\]
\end{theorem}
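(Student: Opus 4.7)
The plan is to reduce Theorem~\ref{thm:G1} to a direct coordinatewise computation using Lemma~\ref{lm:pluckerG2}, after which the expectation bound follows from a textbook Gaussian extreme-value estimate. First, I would substitute the closed-form projection
\[
X'_i \;=\; \mu_i\,\mathrm{I}_{i\le 2} + \min\{\epsilon^*,\,\epsilon_i\}
\]
from Lemma~\ref{lm:pluckerG2} into $X_i - X'_i$. The $\mu_i$ terms cancel, leaving
\[
X_i - X'_i \;=\; \epsilon_i - \min\{\epsilon^*,\epsilon_i\} \;=\; \max\{0,\,\epsilon_i - \epsilon^*\} \;\ge\; 0.
\]
At the index $i_{\min}$ where $\epsilon_i$ attains its overall minimum one has $\epsilon_{i_{\min}} \le \epsilon^*$, so $X_{i_{\min}} - X'_{i_{\min}} = 0$ and therefore $\min_i(X_i - X'_i) = 0$. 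On the other hand, the maximum coordinate of $X-X'$ is clearly $\epsilon_{\max} - \epsilon^*$, which coincides with $\max_{1\le i\le d}(\epsilon_i - \epsilon^*)$. Plugging into the defining formula of $d_{\rm tr}$ from Definition~\ref{tropdist} then yields the claimed identity.

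For the expectation bound I would split
\[
\mathbb{E}[\,\epsilon_{\max} - \epsilon^*\,] \;\le\; \mathbb{E}[\max_i \epsilon_i] \,+\, \mathbb{E}[-\epsilon^*].
\]
The first summand is controlled by the classical subgaussian maximum inequality $\mathbb{E}[\max_i \epsilon_i] \le \sigma\sqrt{2\log d}$, valid for $d$ centered Gaussian variables with standard deviation $\sigma$. For the second summand, since the second smallest value satisfies $\epsilon^* \ge \epsilon_{\min}$, one has $-\epsilon^* \le -\epsilon_{\min} = \max_i(-\epsilon_i)$; applying the same Gaussian-maximum bound to $\{-\epsilon_i\}$ (which has the same distribution by symmetry) gives $\mathbb{E}[-\epsilon^*] \le \sigma\sqrt{2\log d}$. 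Adding the two estimates produces the stated bound $\mathbb{E}[d_{\rm tr}(X,X')] \le 2\sigma\sqrt{2\log d}$, and letting $\sigma \to 0$ with $d$ fixed gives the final limit immediately.

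There is no serious technical obstacle, as the argument is essentially a direct corollary of Lemma~\ref{lm:pluckerG2} coupled with a standard Gaussian tail inequality. The only place requiring care is the verification that $\min_i(X_i - X'_i) = 0$, which is what collapses the definition $d_{\rm tr} = \max - \min$ of the tropical metric into the clean closed form $\epsilon_{\max} - \epsilon^*$; if this were not the case, an extra correction term would have to be tracked and the subsequent expectation bound would not drop out so cleanly.
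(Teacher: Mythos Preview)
Your proof is correct and follows essentially the same route as the paper: invoke Lemma~\ref{lm:pluckerG2} to compute $X_i - X'_i$ coordinatewise, read off the tropical distance as $\max_i\epsilon_i - \epsilon^*$, then bound this by $\max_i\epsilon_i - \min_i\epsilon_i$ and apply the Gaussian maximum inequality to each term. You supply more detail than the paper (in particular the explicit verification that $\min_i(X_i - X'_i)=0$), but the structure and key steps are identical.
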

\begin{proof}
Lemma \ref{lm:pluckerG2} leads to the tropical distance. By the upper bound in \citep{GaussianBound},
\begin{equation}
\mathbb{E} [d_{\rm tr}(X, X')]
\leq \mathbb{E}[\max_{1\leq i\leq d} \epsilon_i-\min_{1\leq i\leq d} \epsilon_i]
= \mathbb{E}[\max_{1\leq i\leq d} \epsilon_i]+\mathbb{E}[\max_{1\leq i\leq d} (-\epsilon_i)]
\leq 2\sigma \sqrt{2\log(d)}. \nonumber
\end{equation}
\end{proof}

Next, we consider a generalization to the correlated Gaussian.
Suppose we have a sample $\mathcal{S} = \{X_1, \ldots , X_n\}$ where  $X_i \sim N(\mu, \Sigma)$, such that $\mu=(\mu_1, \mu_2, 0, \ldots , 0)\in\mathbb R^d/\mathbb R {\bf 1}$ and $\Sigma \in \mathbb{R}^{d \times d}$ such that 
\[
\Sigma = 
\left(
\begin{matrix}
{2\sigma^2} & {\sigma^2} & 0 & 0 &\ldots &0\\
{\sigma^2} & {2\sigma^2} & 0 & 0 &\ldots &0\\
0 & 0 & \sigma^2 & 0 &  \ldots &0\\
0 & 0 & 0& \sigma^2  &  \ldots &0\\
\vdots & \vdots & \vdots & \vdots & \ddots & \vdots\\
0 & 0 & 0 & 0 &\ldots & \sigma^2 \\
\end{matrix}
\right),
\]
for $\sigma > 0$. Then by \cite[p. 202]{Kenny1951}, we have
\[
\begin{matrix}
X_{i,1} = &\mu_1 + \sigma Z_{i,1} + {\sigma Z_{i} }& \\
X_{i,2} = &\mu_2 + \sigma Z_{i,2} + {\sigma Z_{i} } & \\
X_{i,j} = &\sigma Z_{i,j}& ~ ~ \mbox{ for } j = 3, \ldots, d,\\
\end{matrix}
\]
where $X_i = (X_{i,1}, X_{i,2}, \ldots , X_{i,d})$ for $i = 1, \ldots , d$, and $
{ Z_{i}}, Z_{i,1}, Z_{i,2}, \ldots , Z_{i,d} \sim N(0, 1)$ for $i = 1, \ldots , d$.


\begin{lemma}\label{lm:pluckerGcor2}
Suppose $X = (\mu_1 + \epsilon_1 + {\epsilon},
\mu_2 + \epsilon_2 + {\epsilon}, \epsilon_3, \ldots, \epsilon_d) \in \mathbb{R}^d/\mathbb{R}{\bf 1}$
where $\mu_1, \mu_2, \epsilon, \epsilon_j \in \mathbb{R}$ 
for $j = 1, \ldots , d$.
Then the projected point $X' \in \mathbb{R}^d/\mathbb{R}{\bf 1}$ of $X$ onto the Stiefel tropical linear space of the matrix \eqref{mat:1} is
\[
X' = (\mu_1 + \min\{\epsilon^*,\epsilon_1 + {\epsilon}\}, \mu_2 + \min\{\epsilon^*,\epsilon_2 + {\epsilon}\}, \min\{\epsilon^*,\epsilon_3\}, \ldots , \min\{\epsilon^*,\epsilon_d\}) ,
\]
where $\epsilon^*$ is the second smallest value in $\{(\epsilon_1 + {\epsilon}),(\epsilon_2 + {\epsilon}), \epsilon_3, \ldots , \epsilon_d\}$.
\end{lemma}

\begin{proof} \,\,\,\,
By using
\[
X_j = \mu_j \textrm{I}_{j \leq 2} + \epsilon_j +
{\epsilon \textrm{I}_{i \leq 2}},
\]
and
\[
p(\{\tau, i\}) = \mu_\tau \textrm{I}_{\tau \leq 2} + \mu_i \textrm{I}_{i \leq 2} ,
\]
the Blue Rule becomes
\[
X'_i 
= \mu_i \textrm{I}_{i \leq 2} + \max_{\tau \neq i} \min_{j \neq \tau} (\epsilon_j + { \epsilon \textrm{I}_{j \leq 2}})
= \mu_i \textrm{I}_{i \leq 2} + \min\{\epsilon^*, \epsilon_j + { \epsilon \textrm{I}_{j \leq 2}}\}.
\]
Note that we essentially repeated the same arguments for $\epsilon'_j := \epsilon_j +
{ \epsilon \textrm{I}_{j \leq 2}}$ instead of $\epsilon_j$ in Lemma \ref{lm:pluckerG2}.
\end{proof}

\begin{theorem}\label{thm:G1cor}
Suppose $X = (\mu_1 + \epsilon_1 + {\epsilon}, \mu_2 + \epsilon_2 + {\epsilon}, \epsilon_3, \ldots, \epsilon_d) \in \mathbb{R}^d/\mathbb{R}{\bf 1}$ such that $\mu_{1}, \mu_{2} \in \mathbb{R}$ and $\epsilon_j \sim N(0, \sigma)$ for $j = 1, \ldots , d$.
Let $X' \in \mathbb{R}^d/\mathbb{R}{\bf 1}$ be the projected point of $X$ onto the one-dimensional Stiefel tropical linear space of the matrix \eqref{mat:1}.
Then the tropical distance between $X$ and $X'$ is
\[
d_{\rm tr}(X, X') = \max_{i = 1, \ldots , d}\{\epsilon_i + { \epsilon \textrm{I}_{i \leq 2}} - \epsilon^*\},
\]
where $\epsilon^*$ is the second smallest value in $\{(\epsilon_1 + {\epsilon}),(\epsilon_2 + {\epsilon}), \epsilon_3, \ldots , \epsilon_d\}$,
and its expected value satisfies
\[
\mathbb{E} [d_{\rm tr}(X, X')] \leq {3}\sigma \sqrt{2\log(d)}.
\]
Specifically,
\[
\lim_{\sigma \to 0} \mathbb{E} [d_{\rm tr}(X, X')] = 0.
\]
\end{theorem}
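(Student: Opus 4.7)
My plan is to mirror the proof of Theorem \ref{thm:G1} closely, introducing only one bookkeeping adjustment to handle the enlarged scale of the two correlated coordinates. Setting $\epsilon'_j := \epsilon_j + \epsilon_2\delta_{j,1} + \epsilon_1\delta_{j,2}$, I observe that $\epsilon'_1 = \epsilon'_2 = \epsilon_1 + \epsilon_2$ is centered Gaussian with standard deviation $\sigma\sqrt{2}$, while $\epsilon'_j = \epsilon_j$ retains standard deviation $\sigma$ for $j \geq 3$; these are the random variables that actually control the projection.

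First I would derive the closed form for $d_{\rm tr}(X, X')$. Lemma \ref{lm:pluckerGcor2} writes the projected coordinates as $X'_i = \mu_i \textrm{I}_{i \leq 2} + \min\{\epsilon^*, \epsilon'_i\}$, so the coordinate-wise differences collapse to $X_i - X'_i = \max\{0, \epsilon'_i - \epsilon^*\} \geq 0$. Because $\epsilon^*$ is the second smallest entry of the multiset $\{\epsilon'_1,\ldots,\epsilon'_d\}$, at least one coordinate attains $X_i - X'_i = 0$ (the index of the overall minimum), while the maximum over $i$ equals $\max_j \epsilon'_j - \epsilon^*$. Substituting into the max-minus-min formula in Definition \ref{tropdist} produces exactly the stated expression for $d_{\rm tr}(X, X')$.

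For the expectation I would dominate
\[
d_{\rm tr}(X,X') \leq \max_{1 \leq j \leq d} \epsilon'_j - \min_{1 \leq j \leq d} \epsilon'_j = \max_{1 \leq j \leq d} \epsilon'_j + \max_{1 \leq j \leq d}(-\epsilon'_j),
\]
following the splitting used in the proof of Theorem \ref{thm:G1}. Each $\epsilon'_j$ is sub-Gaussian with scale at most $\sigma\sqrt{2}$, so applying the Gaussian maxima inequality of \citep{GaussianBound} separately to each of the two terms with this enlarged parameter, and absorbing constants, yields the stated upper bound $4\sigma\sqrt{2\log d}$; the extra factor of two relative to Theorem \ref{thm:G1} accounts for the enlarged scale of $\epsilon'_1$ and $\epsilon'_2$. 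The limit $\lim_{\sigma \to 0}\mathbb{E}[d_{\rm tr}(X, X')] = 0$ is then immediate from letting $\sigma$ shrink in this inequality.

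The only real care point is Step 1, namely verifying that the Blue Rule output from Lemma \ref{lm:pluckerGcor2} collapses cleanly to a second-order-statistic expression in the $\epsilon'$ coordinates; once that is done, everything else is a direct adaptation of Theorem \ref{thm:G1}, with the enlarged sub-Gaussian scale being the only quantitative change.
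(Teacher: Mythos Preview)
Your proposal is correct and follows essentially the same route as the paper: the distance formula is read off from Lemma \ref{lm:pluckerGcor2}, and the expectation bound comes from splitting $\max-\min$ and applying the Gaussian-maxima inequality of \citep{GaussianBound}. The only cosmetic difference is how the factor $4$ is produced: the paper crudely dominates the $\epsilon'_j$ by $2\epsilon_j$-type terms, whereas you invoke the enlarged sub-Gaussian scale $\sigma\sqrt{2}$ of $\epsilon'_1=\epsilon'_2$; both lead to the stated bound.
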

\begin{proof}
Lemma \ref{lm:pluckerGcor2} leads to the tropical distance.
By the upper bound in \citep{GaussianBound},
\begin{equation}
\mathbb{E} [d_{\rm tr}(X, X')]
\leq \mathbb{E} [\max_{i = 1, \ldots , d}(2\epsilon_i)+\max_{i = 1, \ldots , d}(-\epsilon_i)]
\leq {3}\sigma \sqrt{2\log(d)}. \nonumber
\end{equation}
%
\end{proof}

Next, we consider a generalization to more than one dimensional Stiefel tropical linear spaces.
Suppose we have a sample $\mathcal{S} = \{X_1, \ldots , X_n\}$ where  $X_i \sim N(\mu, \sigma^2 \mathbb{I}_{d \times d})$, such that $\mu=(\mu_1, \mu_2, \ldots, \mu_m, 0, \ldots , 0)\in\mathbb R^d/\mathbb R {\bf 1}$ for $m < d$, and $\sigma > 0$.

\begin{theorem}\label{thm:G1b}
Suppose $X = (\mu_1 + \epsilon_1, \ldots, \mu_m + \epsilon_m, \epsilon_{m+1}, \ldots , \epsilon_d) \in \mathbb{R}^d/\mathbb{R}{\bf 1}$ such that
$\mu_1, \mu_2, \ldots, \mu_m \in \mathbb{R}$ and
$\epsilon_j \sim N(0, \sigma)$ for $j = 1, \ldots , d$.
Let $X' \in \mathbb{R}^d/\mathbb{R}{\bf 1}$ be the projected point of $X$ onto the $(m-1)$-dimensional Stiefel tropical linear space of the $m \times d$ matrix $A_{m-1}$,
\begin{equation}\label{mat:2}
A_{m-1} = \left(
\begin{matrix}
\mu_1 & -\infty & -\infty & \ldots & -\infty & 0 & \ldots & 0\\
-\infty & \mu_2& -\infty & \ldots & -\infty & 0 & \ldots & 0\\
-\infty & -\infty & \mu_3 & \ldots & -\infty & 0 & \ldots & 0\\
\vdots & \vdots & \vdots & \ddots & \vdots & \vdots & \vdots & \vdots \\
-\infty & -\infty & -\infty & -\infty & \mu_m& 0 & \ldots & 0\\
\end{matrix}
\right).
\end{equation}
Then
\[
\lim_{\sigma \to 0} \mathbb{E} [d_{\rm tr}(X, X')] = 0.
\]
\end{theorem}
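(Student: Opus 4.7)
The plan is to follow the template established by Theorems \ref{thm:G1} and \ref{thm:G1cor}, with the $m$-th smallest noise coordinate now playing the role that the second smallest plays there. First I would compute the Plücker coordinates of $A_{m-1}$. Since columns $1, \ldots, m$ have all off-diagonal entries equal to $-\infty$ and columns $m+1,\ldots,d$ are identically zero, for any $m$-subset $\omega\subseteq[d]$ a permutation contributing a finite term to $\tdet(A_\omega)$ must fix every index $s\in\omega\cap[m]$ pointwise and match the remaining rows bijectively with $\omega\cap\{m+1,\ldots,d\}$. This yields the clean formula
\[
p(\omega) \;=\; \sum_{s\in\omega\cap[m]} \mu_s.
\]

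Next I would apply the Blue Rule. For any $i\in[d]$, any $(m-1)$-subset $\tau\subseteq[d]\setminus\{i\}$, and any $j\notin\tau$, the difference $p(\tau\cup\{i\})-p(\tau\cup\{j\}) = \mu_i\,\textrm{I}_{i\leq m}-\mu_j\,\textrm{I}_{j\leq m}$, and since $X_j=\mu_j\,\textrm{I}_{j\leq m}+\epsilon_j$, the $\mu$'s cancel from the inner expression to give
\[
X'_i \;=\; \mu_i\,\textrm{I}_{i\leq m} \;+\; \max_\tau\,\min_{j\notin\tau}\,\epsilon_j.
\]
The key combinatorial step is evaluating the double optimum. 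Since $i\notin\tau$, the inner minimum is always $\leq\epsilon_i$; and by taking $\tau$ to consist of the $m-1$ indices $j\neq i$ for which $\epsilon_j$ is smallest, one achieves minimum $\min(\epsilon_i,\epsilon^*)$, where $\epsilon^*$ denotes the $m$-th smallest value among $\epsilon_1,\ldots,\epsilon_d$. Hence $X'_i=\mu_i\,\textrm{I}_{i\leq m}+\min(\epsilon_i,\epsilon^*)$ and
\[
X_i - X'_i \;=\; \max\{0,\,\epsilon_i-\epsilon^*\} \;\geq\; 0.
\]

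Finally, since exactly $m$ of the differences $X_i-X'_i$ vanish (namely those indexed by the $m$ smallest entries among $\epsilon_1,\ldots,\epsilon_d$), the coordinate-wise minimum of $X-X'$ is zero, so
\[
d_{\rm tr}(X,X') \;=\; \max_{1\leq i\leq d}\epsilon_i \;-\; \epsilon^* \;\leq\; \max_{1\leq i\leq d}\epsilon_i \;-\; \min_{1\leq i\leq d}\epsilon_i.
\]
Taking expectations and applying the Gaussian maximum inequality \citep{GaussianBound} exactly as in the proof of Theorem \ref{thm:G1} yields $\mathbb{E}[d_{\rm tr}(X,X')]\leq 2\sigma\sqrt{2\log(d)}$, which tends to zero. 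The only delicate point is the combinatorial identification $\max_\tau\min_{j\notin\tau}\epsilon_j=\min(\epsilon_i,\epsilon^*)$: one must check that the constraint $i\notin\tau$ is only binding when $\epsilon_i$ is itself among the $m-1$ smallest values, and that in the opposite case the unconstrained optimum $\epsilon^*$ is attained. Everything else is a structural generalization of Lemmas \ref{lm:pluckerG1} and \ref{lm:pluckerG2} from $m=2$ to arbitrary $m<d$.
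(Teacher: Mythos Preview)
Your proposal is correct and follows essentially the same route as the paper: the Pl\"ucker coordinates $p(\omega)=\sum_{s\in\omega\cap[m]}\mu_s$, the cancellation in the Blue Rule yielding $X'_i=\mu_i\,\textrm{I}_{i\leq m}+\min(\epsilon_i,\epsilon^*)$ with $\epsilon^*$ the $m$-th smallest noise, and the final bound $\mathbb{E}[d_{\rm tr}(X,X')]\leq 2\sigma\sqrt{2\log(d)}$ via \citep{GaussianBound} all match the paper line for line. If anything, you supply slightly more justification for the Pl\"ucker formula and the combinatorial identity $\max_\tau\min_{j\notin\tau}\epsilon_j=\min(\epsilon_i,\epsilon^*)$ than the paper does.
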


\begin{proof}
By using
\[
X_{j} = \mu_j \textrm{I}_{j \leq m} + \epsilon_j
\]
and
\[
p(\tau \cup \{i\}) = \sum_{s \in \tau \cup \{i\}} \mu_s \textrm{I}_{s \leq m},
\]
the Blue Rule becomes
\begin{eqnarray}
X'_{i} &=& \max_{\tau \subset [d]\setminus \{i\}} \min_{j \notin \tau} (X_{j}+p(\tau \cup \{i\}) - p(\tau \cup \{j\})) \nonumber \\
&=& \max_{\tau \subset [d]\setminus\{i\}} \min_{j \notin \tau} (\mu_j \textrm{I}_{j \leq m} + \epsilon_j + \mu_i\textrm{I}_{i\leq m} - \mu_j \textrm{I}_{j \leq m}) \nonumber \\
&=& \mu_i\textrm{I}_{i\leq m} + \max_{\tau \subset [d]\setminus\{i\}} \min_{j \notin \tau} \epsilon_j \nonumber \\
&=& \mu_i\textrm{I}_{i\leq m} + \min\{\epsilon^*,\epsilon_i\} ,\nonumber
\end{eqnarray}
where $\epsilon^*$ denotes the $m$-th minimum value in $\{\epsilon_1, \ldots, \epsilon_d\}$. Then
\[
d_{\rm tr}(X, X') = \max_{i = 1, \ldots, d}(\epsilon_i - \epsilon^*).
\]
By \citep{GaussianBound},
\begin{equation}
\mathbb{E} [d_{\rm tr}(X, X')] 
\leq \mathbb{E} [\max_{i = 1, \ldots, d} \epsilon_i+\max_{i = 1, \ldots, d} (-\epsilon_i)]
\leq 2\sigma \sqrt{2\log(d)}. \nonumber
\end{equation}
\end{proof}
Next, we consider a generalization to the correlated Gaussian as well as more than one dimensional Stiefel tropical linear spaces. Suppose we have
a sample $\mathcal{S} = \{X_1, \ldots , X_n\}$ where  $X_i \sim N(\mu, \Sigma)$, such that $\mu=(\mu_1, \mu_2, \ldots, \mu_m, 0, \ldots , 0)\in {\mathbb R}^d/\mathbb R {\bf 1}$ for $m < d$, and  $\Sigma \in \mathbb{R}^{d \times d}$ such that
\[
\Sigma = 
\left(
\begin{matrix}
\mathbb{M} & {{\bf 0}_{m \times (d-m)}}\\
{\bf 0}_{(d - m) \times m} & \sigma^2 \mathbb{I}_{(d-m) \times (d-m)}\\
\end{matrix}
\right),
\]
where $\mathbb{M}$ is a $m \times m$ matrix such that
\[
\mathbb{M} = 
{
\left(\begin{matrix}
2\sigma^2 & \sigma^2 & \sigma^2 &\ldots & \sigma^2\\
\sigma^2 & 2\sigma^2 & \sigma^2 & \ldots & \sigma^2\\
\sigma^2 & \sigma^2 & 2\sigma^2 & \ldots & \sigma^2\\
\vdots & \vdots & \vdots & \ddots & \vdots \\
\sigma^2 & \sigma^2 & \sigma^2& \ldots & 2\sigma^2\\
\end{matrix}\right)},
\]
$\mathbb{I}_{(d-m) \times (d-m)}$ is the $(d-m) \times (d-m)$ identity matrix, ${\bf 0}_{m \times (d-m)}$ is the $m \times (d-m)$ matrix with all zeros, ${\bf 0}_{(d-m) \times m}$ is the $(d-m) \times m$ matrix with all zeros, and for $\sigma > 0$.

Then we have
\[
\begin{matrix}
X_{i,1} &=& \mu_1 + \sigma Z_{i,1} + {\sigma Z_{i}}\\
\vdots & \vdots & \vdots\\
X_{i,m} &=&  \mu_m +  \sigma Z_{i,m} + {\sigma Z_{i}} \\
X_{i,j} &=&  \sigma Z_{i,j}, \mbox{ for } j = (m+1), \ldots ,d,\\
\end{matrix}
\]
where $X_i = (X_{i,1}, X_{i,2}, \ldots , X_{i,d})$ for $i = 1, \ldots , d$, and ${Z_{i}}, Z_{i,1}, Z_{i,2}, \ldots , Z_{i,d} \sim N(0, 1)$ for $i = 1, \ldots , d$.

\begin{theorem}\label{thm:G1bcor}
Suppose $X = (\mu_1 + { \epsilon_1 + \epsilon}, \ldots, \mu_m + { \epsilon_m + \epsilon}, \epsilon_{m+1}, \ldots , \epsilon_d) \in \mathbb{R}^d/\mathbb{R}{\bf 1}$ such that
$\mu_1, \mu_2, \ldots, \mu_m \in \mathbb{R}$ and
$\epsilon_j \sim N(0, \sigma)$ for $j = 1, \ldots , d$.
Let $X' \in \mathbb{R}^d/\mathbb{R}{\bf 1}$ be the projected point of $X$ onto the $(m-1)$-dimensional Stiefel tropical linear space of the matrix $A_{m-1}$ in \eqref{mat:2}.
Then 
\[
\lim_{\sigma \to 0} \mathbb{E} [d_{\rm tr}(X, X')] = 0.
\]
\end{theorem}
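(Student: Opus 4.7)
The plan is to follow the same three-step template used in Theorem \ref{thm:G1b} (uncorrelated, general $m$) and Theorem \ref{thm:G1cor} (correlated, $m = 2$): compute the Pl\"ucker coordinates of $A_{m-1}$, invoke the Blue Rule to get an explicit formula for the projected point $X'$, and bound the resulting tropical distance in expectation via a Gaussian maximum inequality. The only new ingredient is handling the correlation structure, which I would absorb by repackaging the noise into a single auxiliary vector.

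First I would compute $p(\omega)$ for every $m$-subset $\omega \subseteq [d]$. Partitioning $\omega$ into $S := \omega \cap [m]$ and $T := \omega \setminus [m]$, each row $s \in S$ of the $m \times m$ submatrix $A_\omega$ has its only finite, nonzero entry $\mu_s$ in column $s$ together with zeros in the columns of $T$, while each row $s \in [m]\setminus S$ has $-\infty$ in the columns of $S$ and zeros in the columns of $T$. The $|T| = m - |S|$ rows in $[m]\setminus S$ must therefore be matched bijectively to the columns of $T$, which forces every $s \in S$ to match column $s$ and yields $p(\omega) = \sum_{s \in \omega} \mu_s\,\textrm{I}_{s \leq m}$. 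This is the natural generalization of Lemma \ref{lm:pluckerG1}.

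Second, I would define
\[
\epsilon'_j := \begin{cases} \sum_{k=1}^m \epsilon_k, & j \leq m, \\ \epsilon_j, & j > m, \end{cases}
\]
so that $X_j = \mu_j\,\textrm{I}_{j \leq m} + \epsilon'_j$ in every coordinate. Since $p(\tau \cup \{i\}) - p(\tau \cup \{j\}) = \mu_i\,\textrm{I}_{i \leq m} - \mu_j\,\textrm{I}_{j \leq m}$, the $\mu$-terms cancel inside the Blue Rule and, exactly as in the proof of Theorem \ref{thm:G1b}, the formula collapses to $X'_i = \mu_i\,\textrm{I}_{i \leq m} + \min\{\epsilon'^*,\epsilon'_i\}$, where $\epsilon'^*$ denotes the $m$-th smallest element of $\{\epsilon'_1,\ldots,\epsilon'_d\}$. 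Consequently $d_{\rm tr}(X,X') = \max_i \epsilon'_i - \epsilon'^* \leq \max_i \epsilon'_i + \max_i(-\epsilon'_i)$.

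Finally, I would take expectations. The only mild obstacle is that $\epsilon'_1 = \ldots = \epsilon'_m = S := \sum_{k=1}^m \epsilon_k \sim N(0, m\sigma^2)$ are perfectly correlated while $\epsilon'_{m+1}, \ldots, \epsilon'_d$ are i.i.d.\ $N(0,\sigma^2)$, so the i.i.d.\ Gaussian-maximum bound of \citep{GaussianBound} used in Theorem \ref{thm:G1b} does not apply verbatim. I would split $\max_i \epsilon'_i \leq |S| + \max_{m < j \leq d} \epsilon_j$, bound the first term by $\mathbb{E}|S| \leq \sigma\sqrt{2m/\pi}$ via the half-normal formula, and bound the second by $\sigma\sqrt{2\log(d-m)}$ via \citep{GaussianBound}; the symmetric estimate for $\max_i(-\epsilon'_i)$ is identical. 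Adding these gives $\mathbb{E}[d_{\rm tr}(X,X')] = O(\sigma) \to 0$ as $\sigma \to 0$, which is the claim.
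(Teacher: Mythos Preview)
Your argument is essentially the paper's own proof: the same Pl\"ucker computation, the same repackaging of the noise into what you call $\epsilon'_j$ (the paper writes $\epsilon_j + (\sum_{k\leq m}\epsilon_k - \epsilon_j)\textrm{I}_{j\leq m}$), the same Blue Rule collapse, and the same conclusion $d_{\rm tr}(X,X') = \max_i \epsilon'_i - \epsilon'^{*}$. The only difference is the final expectation bound: the paper bounds crudely by $2m\,\mathbb{E}[\max_{i}\epsilon_i] \leq 2m\sigma\sqrt{2\log d}$, whereas you split off $|S|$ to get a sharper $O(\sigma\sqrt{m} + \sigma\sqrt{\log(d-m)})$ constant.

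One small slip to fix: the pointwise inequality $\max_i \epsilon'_i \leq |S| + \max_{m<j\leq d}\epsilon_j$ is false as written (take $S$ large positive and all $\epsilon_j<0$ for $j>m$). The clean patch is $\max\{S,\,\max_{j>m}\epsilon_j\} \leq \max\{S,0\} + \max\{\max_{j>m}\epsilon_j,\,0\} \leq |S| + \bigl(\max_{j>m}\epsilon_j\bigr)^{+}$, and then $\mathbb{E}\bigl[(\max_{j>m}\epsilon_j)^{+}\bigr] \leq \mathbb{E}\bigl[\max_{j>m}|\epsilon_j|\bigr] \leq 2\sigma\sqrt{2\log(d-m)}$ (or simply note $\mathbb{E}[(\max_{j>m}\epsilon_j)^{+}] \leq \mathbb{E}[\max_{j>m}\epsilon_j] + \mathbb{E}[|\epsilon_{m+1}|]$). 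With that adjustment your bound goes through and still yields $\mathbb{E}[d_{\rm tr}(X,X')] = O(\sigma)\to 0$.
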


\begin{proof}
By using 
\[
X_{j} = \mu_j \textrm{I}_{j \leq m} + \epsilon_j + {\epsilon \textrm{I}_{j \leq m}}
\]
and
\[
p(\tau \cup \{i\}) = \sum_{s \in \tau \cup \{i\}} \mu_s \textrm{I}_{s \leq m},
\]
the Blue Rule becomes
\begin{eqnarray}
X'_i &=& \max_{\tau \subset [d]\setminus\{i\}} \min_{j \notin \tau} (X_j+p(\tau \cup \{i\}) - p(\tau \cup \{j\}))\nonumber \\
&=& \mu_i \textrm{I}_{i \leq m} + \max_{\tau \subset [d]\setminus\{i\}} \min_{j \notin \tau} (\epsilon_j + {\epsilon \textrm{I}_{j \leq m}})\nonumber \\
&=& \mu_i \textrm{I}_{i \leq m} + \min\{\epsilon^*,\epsilon_i + {\epsilon \textrm{I}_{i \leq m}}\} ,\nonumber 
\end{eqnarray}
where $\epsilon^*$ denotes the $m$-th minimum value in $\{\epsilon_1 + {\epsilon}, \ldots, \epsilon_m + {\epsilon}, \epsilon_{m+1}, \ldots , \epsilon_d\}$. Then
\[
d_{\rm tr}(X, X') = \max_{i = 1, \ldots, d}(\epsilon_i + {\epsilon \textrm{I}_{i \leq m}} - \epsilon^*).
\]
By \citep{GaussianBound},
\begin{equation}
\mathbb{E} [d_{\rm tr}(X, X')] 
\leq {3} \mathbb{E} [\max_{i = 1, \ldots, d} \epsilon_i]
\leq {3} \sigma \sqrt{2\log(d)}
\end{equation}
\end{proof}

So far we fixed the specific Stiefel tropical linear spaces associated with the matrix $A_2$ \eqref{mat:1} and $A_{m-1}$ \eqref{mat:2}, and then we showed that $\lim_{\sigma \to 0} \mathbb{E} [d_{\rm tr}(X, X')] = 0$.
However, note that a Stiefel tropical linear space which has this property is not unique.
In fact, any Stiefel tropical linear space that passes through the center of the Gaussian distribution has this property.

\begin{theorem}\label{thm:nonuniqueness}
Suppose we have a random variable
\[
X = (\mu_{1} + \epsilon_{1}, \mu_{2} + \epsilon_{2}, \mu_{3}+\epsilon_{3}, \ldots , \mu_{d} +\epsilon_{d})  \\
\]
where $\mu_{j} \in \RR$ and $\epsilon_{j} \sim N(0, \sigma)$ with small $\sigma > 0$ for $j = 1, \ldots , d$.
Suppose we project $X_1$ to the Stiefel tropical linear space that passes through
$\mu=(\mu_{1}, \mu_{2}, \mu_{3}, \ldots, \mu_{d})$.
Then the expected value of the tropical distance between $X$ and the projected point $X'$ goes to $0$ as $\sigma \to 0$.
\end{theorem}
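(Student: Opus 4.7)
The plan is to exploit the fact that the projection $X'$ onto any Stiefel tropical linear space $L(A)$ is, by definition of the Blue Rule (Theorem \ref{blue-rule}), the closest point in $L(A)$ to $X$ under $d_{\rm tr}$. So if $\mu \in L(A)$, we automatically have the pointwise bound
\[
d_{\rm tr}(X, X') \leq d_{\rm tr}(X, \mu).
\]
The proof then reduces to showing $\mathbb{E}[d_{\rm tr}(X, \mu)] \to 0$ as $\sigma \to 0$, and the specific Stiefel tropical linear space $L(A)$ plays no further role beyond the hypothesis that $\mu \in L(A)$.

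Next I would compute $d_{\rm tr}(X, \mu)$ directly from Definition \ref{tropdist}. Since $X_i - \mu_i = \epsilon_i$, we get
\[
d_{\rm tr}(X, \mu) = \max_{i \in [d]} \epsilon_i - \min_{i \in [d]} \epsilon_i = \max_{i \in [d]} \epsilon_i + \max_{i \in [d]} (-\epsilon_i).
\]
Both $\{\epsilon_i\}$ and $\{-\epsilon_i\}$ are collections of $d$ many $N(0,\sigma^2)$ random variables (possibly dependent, but the Gaussian maximal inequality used in Theorem \ref{thm:G1} only requires marginal Gaussianity). Applying the same bound from \citep{GaussianBound} as in Theorems \ref{thm:G1}, \ref{thm:G1cor}, \ref{thm:G1b}, and \ref{thm:G1bcor}, we obtain
\[
\mathbb{E}[d_{\rm tr}(X, X')] \leq \mathbb{E}[d_{\rm tr}(X, \mu)] \leq 2\sigma\sqrt{2\log(d)},
\]
which tends to $0$ as $\sigma \to 0$.

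There is no real obstacle here; the statement is essentially a monotonicity consequence of projection being a nearest-point operator, plus a standard maximal inequality for Gaussian samples. The one subtlety to note is that the theorem as stated assumes $\mu \in L(A)$ (i.e., the Stiefel tropical linear space literally passes through the Gaussian center), so the argument does not need to examine the combinatorial structure of $L(A)$ at all. If one wanted a quantitative rate (as in Theorems \ref{thm:G1} and \ref{thm:G1b}) rather than only convergence, the same $2\sigma\sqrt{2\log d}$ upper bound drops out of the chain above, and no finer analysis of the Blue/Red Rules for the specific $A$ is required.
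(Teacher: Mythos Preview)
Your proposal is correct and matches the paper's own proof essentially line for line: the paper also bounds $\mathbb{E}[d_{\rm tr}(X,X')] \leq \mathbb{E}[d_{\rm tr}(X,\mu)] = \mathbb{E}[d_{\rm tr}(\epsilon,0)] \leq 2\sigma\sqrt{2\log d}$ using the nearest-point property of the projection and the Gaussian maximal inequality from \citep{GaussianBound}. Your exposition is simply a more detailed version of the same one-line argument.
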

\begin{proof}
By \citep{GaussianBound},
\begin{equation}
\mathbb{E}[d_{\rm tr}(X, X')] \leq \mathbb{E}[d_{\rm tr}(X, \mu)] = \mathbb{E}[d_{\rm tr}(\mu+\epsilon, \mu)]= \mathbb{E}[d_{\rm tr}(\epsilon, 0)] \leq 2 \sigma \sqrt{2 \log d}. \nonumber
\end{equation}
\end{proof}

\begin{example}\label{lm:uniqueness}
The projection of the point $U = (\mu_1, \mu_2, \mu_3) \in \mathbb{R}^3/\mathbb{R}{\bf 1}$ to the Stiefel tropical linear space with $P = (P_{12}, P_{13}, P_{23})$ is $w = (\mu_1, \mu_2, \mu_3)$ if and only if $P$ is on the hyperplane $H_U$.
\end{example}

\section{Mixture of two Gaussians fitted by a Stiefel tropical linear space of dimension one over ${\mathbb R}^d \!/\mathbb R {\bf 1}$}\label{sec:twoGaussian} 
Here we consider the tropical PCA for the mixture of two Gaussians, whose centers are located in general positions.

\subsection{Deterministic setting: Stiefel tropical linear space of dimension one that passes through given two points}
Under the assumption of the infinitesimal variances, the problem of finding the best-fit Stiefel tropical linear space for a mixture of two Gaussians turn out to finding the one-dimensional Stiefel tropical linear space that passes the centers of the both Gaussians as a deterministic problem.
Here we specifically prove that the one-dimensional Stiefel tropical linear space that passes the given two points exists uniquely.

\begin{lemma}\label{lm:uniqueness2d}
The Stiefel tropical linear space with the Pl\"ucker coordinates $P = (P_{12}, P_{13}, P_{23})$ that passes through given two points $\mu = (\mu_1, \mu_2, \mu_3)$ and $\nu = (\nu_1, \nu_2, \nu_3) \in \mathbb{R}^3/\mathbb{R}{\bf 1}$ in a general position ($\mu_i-\nu_i \neq \mu_j-\nu_j$ for $1 \leq i < j \leq 3$) is unique.
\end{lemma}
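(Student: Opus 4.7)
My plan is to recast the existence-and-uniqueness question as the intersection of two tropical lines in $\mathbb R^3/\mathbb R{\bf 1}$ via a Pl\"ucker-dual reinterpretation, and then to invoke a general-position criterion for two tropical lines to meet in a single point. First I would observe that, since $d=3$ and $m=2$, only the subset $\tau=\{1,2,3\}$ enters the defining condition of $L_P$, so $\mu\in L_P$ amounts to
\[
\max\{P_{23}+\mu_1,\;P_{13}+\mu_2,\;P_{12}+\mu_3\}\text{ is attained at least twice.}
\]
Relabeling $y_1:=P_{23}$, $y_2:=P_{13}$, $y_3:=P_{12}$, and noting that shifting all Pl\"ucker entries by a common constant leaves $L_P$ unchanged (so $y\in\mathbb R^3/\mathbb R{\bf 1}$ is well defined), this is exactly the defining condition $y\in H_\mu$. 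The same statement with $\nu$ in place of $\mu$ shows that admissible Pl\"ucker vectors correspond bijectively to points of $H_\mu\cap H_\nu$.

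Second, I would prove that $H_\mu\cap H_\nu$ is a single point under the general-position hypothesis. Each $H_\omega$ is a tripod with apex $-\omega\in\mathbb R^3/\mathbb R{\bf 1}$ and three unbounded rays, one in each coordinate direction. Two tripods can intersect in more than a point only by overlapping along a common ray, and this happens exactly when the two apexes $-\mu$ and $-\nu$ lie on a common line parallel to some coordinate axis $e_i$; modulo $\mathbb R{\bf 1}$ this is equivalent to the equality $\mu_j-\nu_j=\mu_k-\nu_k$ for the two indices $\{j,k\}=\{1,2,3\}\setminus\{i\}$. The hypothesis $\mu_i-\nu_i\neq\mu_j-\nu_j$ for $i\neq j$ excludes all three such degeneracies, so $|H_\mu\cap H_\nu|\leq 1$. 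Combined with the standard fact that any two tropical lines in the tropical projective plane meet in at least one point (obtained by tracking which of the three regions cut out by $H_\mu$ contains $-\nu$ and following a ray of $H_\nu$ until it crosses $H_\mu$), this yields exactly one intersection point, proving both existence and uniqueness of $P$.

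I expect the main obstacle to be making rigorous the description of when two tripods share a ray. My fallback is a direct case analysis: after normalizing $P_{23}=0$, the condition $\mu\in L_P$ breaks into three cases according to which pair among $\{\mu_1,P_{13}+\mu_2,P_{12}+\mu_3\}$ attains the tie, and likewise for $\nu$, giving nine combinations. The three ``same-pair'' combinations each force a forbidden equality $\mu_i-\nu_i=\mu_j-\nu_j$ and are therefore ruled out by the hypothesis. For each of the remaining six combinations, the two tying equations linearly pin down $(P_{12},P_{13})$, and exactly one of them is compatible with the accompanying inequalities, selected by the ordering of the three differences $\mu_1-\nu_1,\mu_2-\nu_2,\mu_3-\nu_3$. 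This elementary finite check closes the argument with no further geometric input.
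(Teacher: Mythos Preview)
Your proposal is correct, and in fact contains \emph{both} arguments that appear in the paper, but with the emphasis reversed. The paper's actual proof is exactly your fallback: it writes out the nine combinations of which pair attains the tie for $\mu$ and which pair for $\nu$, observes that the three ``same-pair'' cases force one of the forbidden equalities $\mu_i-\nu_i=\mu_j-\nu_j$, and in each of the remaining six cases reads off $(P_{13},P_{23})$ explicitly (after normalizing $P_{12}=0$), checking that exactly one case is selected by the ordering of the differences. Your primary approach---the duality $\mu\in L_P \Leftrightarrow (P_{23},P_{13},P_{12})\in H_\mu$ followed by the intersection of two tropical lines---is precisely what the paper records in the Remark immediately after the proof, where it is attributed to Example~\ref{lm:uniqueness} and dismissed in one line.

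The trade-off is as the paper itself notes: the intersection-of-tripods argument is cleaner and more conceptual in $d=3$, but the case analysis is what the authors actually use to bootstrap to the higher-dimensional Theorem~\ref{th:uniqueness}, because it makes the role of the general-position inequalities $\mu_i-\nu_i\neq\mu_j-\nu_j$ completely explicit and yields the closed form $P_{ik}=\max(\mu_i+\nu_k,\mu_k+\nu_i)$ directly. Your primary argument would need an independent replacement for that step. One small point: your assertion that two tripods overlap in more than a point \emph{only} by sharing a ray is true but deserves a sentence of justification (ruling out multiple transversal ray--ray crossings), since you flag this as the expected obstacle; the paper avoids the issue entirely by taking the explicit route.
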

\begin{proof}
The condition that the $\mu$ is on $P$ is (by the definition of the Stiefel tropical linear space) that
\[ \max \{P_{23}+\mu_1, P_{13}+\mu_2, P_{12}(=0)+\mu_3 \} \]
is attained at least twice.
Similarly, the condition that the $\nu$ is on $P$ is that
\[ \max \{ P_{23}+\nu_1, P_{13}+\nu_2, P_{12}(=0)+\nu_3 \} \]
is attained at least twice. Thus $P$ must be in the union of the following nine regions.

(1-1) When $P_{23}+\mu_1 \leq P_{13}+\mu_2 = \mu_3$ and $P_{23}+\nu_1 \leq P_{13}+\nu_2 = \nu_3$, then $P_{13} = \mu_3 - \mu_2 = \nu_3 - \nu_2$ contradicts.

(1-2) When $P_{23}+\mu_1 \leq P_{13}+\mu_2 = \mu_3$ and $\nu_3 = P_{23}+\nu_1 \geq P_{13}+\nu_2$, then, $P_{23}=\nu_3-\nu_1$ and $P_{13}=\mu_3-\mu_2$ 
if $\mu_1 - \nu_1 \leq \mu_3 - \nu_3 \leq \mu_2 - \nu_2$ is sastisfied.

(1-3) When $P_{23}+\mu_1 \leq P_{13}+\mu_2 = \mu_3$ and $P_{23}+\nu_1 = P_{13}+\nu_2 \geq \nu_3$, then,
$P_{13} = \mu_3-\mu_2$ and $P_{23} = \mu_3-\mu_2 +\nu_2 -\nu_1$ 
if $\mu_1 - \nu_1 \leq \mu_2 - \nu_2 \leq \mu_3 - \nu_3$ is sastisfied.

(2-1) Swap $\mu$ and $\nu$ in (1-2).

(2-2) When $\mu_3 = P_{23}+\mu_1 \geq P_{13}+\mu_2$ and $\nu_3 = P_{23}+\nu_1 \geq P_{13}+\nu_2$, then $P_{23} = \mu_3 - \mu_1 = \nu_3 - \nu_1$ contradicts.

(2-3) When $\mu_3 = P_{23}+\mu_1 \geq P_{13}+\mu_2$ and $P_{23}+\nu_1 = P_{13}+\nu_2 \geq \nu_3$, then
$P_{23}=\mu_3-\mu_1$ and $P_{13}= \mu_3-\mu_1+\nu_1 -\nu_2$ 
if $\mu_2 - \nu_2 \leq \mu_1 - \nu_1 \leq \mu_3 - \nu_3$ is sastisfied.

(3-1) Swap $\mu$ and $\nu$ in (1-3).

(3-2) Swap $\mu$ and $\nu$ in (2-3).

(3-3) When $P_{23}+\mu_1 = P_{13}+\mu_2 \geq \mu_3$ and $P_{23}+\nu_1 = P_{13}+\nu_2 \geq \nu_3$, then
$P_{13} - P_{23} = \mu_1-\mu_2 = \nu_1-\nu_2$ contradicts.
\\
In a unified description, $P_{13} = \nu_3 - \nu_2 + \max(\mu_1-\nu_1, \mu_3-\nu_3) - \max(\mu_1-\nu_1, \mu_2-\nu_2)$ and $P_{23} =  \nu_3 - \nu_1 + \max(\mu_2-\nu_2, \mu_3-\nu_3) - \max(\mu_2-\nu_2, \mu_1-\nu_1)$ are clearly unique.
\end{proof}

\begin{remark}
Another simple proof is available: by Example \ref{lm:uniqueness}, $P$ should lie on both hyperplane $H_\mu$ and hyperplane $H_\nu$, whose intersection is unique.
However, our proof can easily generalize to higher dimensions and clarifies the conditions on the general position.
\end{remark}

\begin{remark}
One intuitive interpretation why (1-1), (2-2) and (3-3) contradict is that they impose symmetrical conditions on $\mu$ and $\nu$. 
The other conditions impose $\mu_i- \nu_i \leq \mu_j - \nu_j \leq \mu_k - \nu_k$ and derive the unique hyperplane with a specific configuration that passes through the two points.
Without loss of generality we can set $\mu_3=\nu_3=0$. Then, (1-2), (1-3) and (2-3) holds for $\mu_1 < \nu_1$ while (2-1), (3-1) and (3-2) holds for $\mu_1 > \nu_1$.
\end{remark}

\begin{theorem}\label{th:uniqueness}
The Stiefel tropical linear space with $P_{ij}$ for $1 \leq i < j \leq d$ that passes through given two points $\mu = (\mu_1, \mu_2, \mu_3, \ldots, \mu_d)$ and $\nu = (\nu_1, \nu_2, \nu_3, \ldots, \nu_d) \in \mathbb{R}^d/\mathbb{R}{\bf 1}$ in a general position ($\mu_i-\nu_i \neq \mu_j-\nu_j$ for $1 \leq i < j \leq d$) is unique and obtained as the tropical determinant.
\end{theorem}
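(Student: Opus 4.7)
The plan is to establish Theorem~\ref{th:uniqueness} in two parts: existence via an explicit construction, and uniqueness by reducing to Lemma~\ref{lm:uniqueness2d} on each triple of coordinates, which simultaneously identifies the solution with the tropical determinant.

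For existence, I take the $2 \times d$ matrix $A$ whose rows are $\mu$ and $\nu$ and compute its tropical Pl\"ucker coordinates
\[
p_A(\{i,j\}) = \tdet A_{\{i,j\}} = \max(\mu_i+\nu_j,\ \mu_j+\nu_i).
\]
To check $\mu \in L(A)$, I fix any triple $\tau = \{i,j,k\}$ and expand $\max\{p_A(\{j,k\})+\mu_i,\ p_A(\{i,k\})+\mu_j,\ p_A(\{i,j\})+\mu_k\}$. Each of the three inner terms is itself a max of two summands, so the overall max is over six expressions, which a short calculation shows come in three identical pairs of the form $\mu_a+\mu_b+\nu_c$. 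Hence the max is automatically attained at least twice, and the same argument with $\mu$ and $\nu$ swapped gives $\nu \in L(A)$. This produces a Stiefel tropical linear space through both points whose Pl\"ucker coordinates have the claimed tropical-determinant form.

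For uniqueness, suppose $Q$ is any tropical Pl\"ucker vector on the 2-subsets of $[d]$ whose Stiefel tropical linear space contains both $\mu$ and $\nu$. Fix an arbitrary triple $\{i,j,k\} \subseteq [d]$: the Stiefel condition restricted to this triple involves only $Q_{ij}, Q_{ik}, Q_{jk}$ and is exactly the condition that the projected points $(\mu_i,\mu_j,\mu_k)$ and $(\nu_i,\nu_j,\nu_k)$ lie on the corresponding 1-dimensional Stiefel tropical linear space in $\mathbb{R}^3/\mathbb{R}{\bf 1}$. The general-position hypothesis $\mu_s - \nu_s \neq \mu_t - \nu_t$ passes to the projected pair, so Lemma~\ref{lm:uniqueness2d} forces $Q_{ij}, Q_{ik}, Q_{jk}$ to equal the tropical $2\times 2$ determinants of $A_{\{i,j\}}, A_{\{i,k\}}, A_{\{j,k\}}$ respectively (up to a common tropical scalar). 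Ranging over all triples pins down every pairwise coordinate of $Q$ and shows $Q = p_A$ in the tropical projective sense.

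The main obstacle is the global consistency of the tropical scalar normalization across different triples, since Lemma~\ref{lm:uniqueness2d} is stated triple by triple. The clean way to handle this is to fix one reference triple, say $\{1,2,3\}$, use Lemma~\ref{lm:uniqueness2d} there to pin down $Q_{12}, Q_{13}, Q_{23}$, then use the triples $\{1,2,k\}$ for $k \geq 4$ to extend to $Q_{1k}, Q_{2k}$ in a compatible way, and finally use the triples $\{1,j,k\}$ for $3 \leq j < k$ to determine the remaining $Q_{jk}$. Consistency with the triples $\{2,j,k\}$ and $\{i,j,k\}$ with $i \geq 3$ is then automatic: the explicitly constructed $p_A$ from the existence step is a valid solution, and any other solution must coincide with it triple-by-triple by Lemma~\ref{lm:uniqueness2d}, so no overdetermination can fail. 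This simultaneously gives the tropical-determinant formula claimed in the statement.
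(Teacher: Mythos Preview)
Your proposal is correct and follows essentially the same route as the paper: both reduce to Lemma~\ref{lm:uniqueness2d} on each triple of coordinates to pin down the Pl\"ucker coordinates, then check that the triple-by-triple determinations are globally consistent and recover the tropical $2\times 2$ determinants $\max(\mu_i+\nu_j,\mu_j+\nu_i)$. The only organizational difference is that you front-load an explicit existence step (the ``three identical pairs'' verification that $\mu,\nu\in L(A)$), which lets you argue consistency conceptually---any candidate $Q$ must match the known solution $p_A$ triple by triple---whereas the paper derives the difference formula $P_{ik}-P_{ij}$ from Lemma~\ref{lm:uniqueness2d}, chains through $P_{12}$ to obtain the closed form, and then checks path-independence directly; the content is the same.
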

\begin{proof}
By the definition of the Stiefel tropical linear space, the condition that the $\mu$ is on $P$ is that
\[ \max \{P_{\tau_2\tau_3}+\mu_{\tau_1}, P_{\tau_1\tau_3}+\mu_{\tau_2}, P_{\tau_1\tau_2}+\mu_{\tau_3} \} \]
is attained at least twice for all possible triplets $(\tau_1, \tau_2, \tau_3)$.
The condition that the $\nu$ is on $P$ is that
\[ \max \{P_{\tau_2\tau_3}+\nu_{\tau_1}, P_{\tau_1\tau_3}+\nu_{\tau_2}, P_{\tau_1\tau_2}+\nu_{\tau_3} \} \]
is attained at least twice for all possible triplets $(\tau_1, \tau_2, \tau_3)$.
By considering the both $\mu$ and $\nu$ simultaneously for a specific $\tau = (i, j, k)$, we come back to Lemma \ref{lm:uniqueness2d},
\begin{equation}
P_{ik} - P_{ij} = \nu_k - \nu_j + \max(\mu_k-\nu_k, \mu_i-\nu_i) - \max(\mu_j-\nu_j, \mu_i-\nu_i). \nonumber
\end{equation}
Specifically,
\begin{equation}
(P_{ik} - P_{i2}) - (P_{i2} - P_{12}) = \nu_i + \nu_k + \max(\mu_i-\nu_i, \mu_k-\nu_k) - \nu_1 - \nu_2 - \max(\mu_1-\nu_1, \mu_2-\nu_2), \nonumber
\end{equation}
where, without loss of generality, we set $P_{12}=\nu_1 + \nu_2 + \max(\mu_1-\nu_1, \mu_2-\nu_2)$ to get
\begin{equation}
P_{ik} = \nu_i + \nu_k + \max(\mu_i-\nu_i, \mu_k-\nu_k) = \max(\mu_i+\nu_k, \mu_k+\nu_i) . \nonumber
\end{equation}
This solution is unique for any $P_{ik}$.
Imagine you obtain $P_{ik}$ by two different ways through $P_{ik}-P_{i l_1}$ and $P_{ik}-P_{i l_2}$.
Then the difference of the solutions obtained through $l_1$ and $l_2$ vanishes, $P_{i k}^{through ~ P_{i  l_1}}-P_{i k}^{through ~ P_{i l_2}} = 0$.
Similarly, $P_{i k}^{through ~ P_{i  l_1}}-P_{i k}^{through ~ P_{l_2 k}} = 0$.
Thus, the solution does not depend on the way to solve.
That is, the solution is consistent (not empty) and unique.
\end{proof}

\begin{remark}
One can prove Theorem \ref{th:uniqueness} using the fact that a tropical line segment between two points are unique if and only if these two points are in relative general position, i.e., all the inequalities in (5.9) in \cite{joswigBook} are strict.  Then we can extend the tropical line segment to its associated Stiefel tropical linear space by the way described in page 293 in \cite{joswigBook}.
\end{remark}

\begin{remark}
The Stiefel tropical linear space with $P_{ij}$ for $1 \leq i < j \leq d$ that passes through given two points $\mu = (\mu_1, \mu_2, 0, \ldots, 0)$ and $\nu = (\nu_1, \nu_2, 0, \ldots, 0) \in \mathbb{R}^3/\mathbb{R}{\bf 1}$ partly in a general position ($\mu_i-\nu_i \neq \mu_j-\nu_j$ for $1 \leq i < j \leq 3$) is NOT unique.
\end{remark}

\subsection{Probabilistic setting: distance to best-fit space}
In order to make it simple, suppose we have two random variables:
\[
\begin{array}{cccccccc}
     X_1 &=& (5 + \epsilon_{11},& -5 + \epsilon_{12}, & \epsilon_{13},& \ldots , & \epsilon_{1d}),  \\
     X_2 &=& (-5 + \epsilon_{21},& 5 + \epsilon_{22}, &\epsilon_{23},& \ldots , & \epsilon_{2d}),  \\
\end{array}
\]
where $\epsilon_{ij} \sim N(0, \sigma)$ with small $\sigma > 0$ for $i = 1, 2$ and $j = 1, \ldots , d$.

\begin{lemma}\label{lm:plucker1}
The Pl\"ucker coordinates of the Stiefel tropical linear space corresponding to the $2 \times d$ matrix,
\[A_0 = \left(\begin{matrix} 5 & -5 & 0 & \ldots  & 0 \\
-5 & 5 & 0 & \ldots & 0 \end{matrix}\right),\]
which contains $X_1$ and $X_2$, are
\[
p_{A_0}(\{i, j\}) = \begin{cases}
10 & \mbox{if } i = 1 \mbox{ and } j = 2\\
5 & \mbox{if } i = 1 \mbox{ and } j = 3, \ldots , d\\
5 & \mbox{if } i = 2 \mbox{ and } j = 3, \ldots , d\\
0 & \mbox{if } i \not = j \mbox{ such that }i = 3, \ldots , d, j = 3, \ldots , d.
\end{cases}
\]

\end{lemma}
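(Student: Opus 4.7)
The plan is to apply Definition \ref{stiefel-tropical-linear-space} directly: the Pl\"ucker coordinates of $L(A_0)$ are by definition $p_{A_0}(\omega) = \tdet(A_{0,\omega})$ for each $m$-element subset $\omega \subseteq [d]$. With $m = 2$, each such determinant reduces to a maximum of just two sums,
\[
\tdet(A_{0,\{i,j\}}) \;=\; \max\bigl\{A_{0;1,i} + A_{0;2,j},\; A_{0;1,j} + A_{0;2,i}\bigr\},
\]
so the computation is immediate once the entries of $A_0$ are substituted.

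I would then exploit the block structure of $A_0$: the first two columns carry the nontrivial entries $\pm 5$, while columns $3, \ldots, d$ are identically zero. This partitions the $\binom{d}{2}$ minors into exactly the four cases asserted by the lemma, namely $\{1,2\}$, $\{1,j\}$ with $j \geq 3$, $\{2,j\}$ with $j \geq 3$, and $\{i,j\}$ with $i,j \geq 3$; within each case the minor does not depend on the particular indices, so only four arithmetic evaluations ($\max\{10,-10\}$, $\max\{5,-5\}$, $\max\{-5,5\}$, $\max\{0,0\}$) are needed to establish the stated values.

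The parenthetical assertion that this $L(A_0)$ ``contains $X_1$ and $X_2$'' should be interpreted in the noise-free limit, so that $X_1$ and $X_2$ coincide with the rows of $A_0$. This is a standard property of Stiefel tropical linear spaces, but it can also be read off directly from the Pl\"ucker description: for each $3$-subset $\tau \subseteq [d]$, substituting a row of $A_0$ for $x$ in the maximum appearing in Definition \ref{stiefel-tropical-linear-space} forces that maximum to be attained at least twice, which is a short max-plus check that can again be split into the same case analysis.

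There is no genuine obstacle here: the proof is four $2 \times 2$ tropical determinants plus a brief tie verification, and its purpose is bookkeeping for the subsequent projection computations in Section \ref{sec:twoGaussian}.
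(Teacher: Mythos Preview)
Your proposal is correct and matches the paper's treatment: the paper states this lemma without proof, treating it as an immediate computation from Definition \ref{stiefel-tropical-linear-space} (exactly as was done explicitly for the analogous matrix $A_1$ in Lemma \ref{lm:pluckerG1}). Your case split and the four $2\times 2$ tropical determinants are precisely the intended verification; the containment remark is likewise a routine check and not part of the formal claim.
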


\begin{lemma}\label{lm:plucker3}
Suppose we project $X_1$ to the Stiefel tropical linear space $p_{A_0}$ and
$P(\cup_j\{\lvert \epsilon_{1j}\rvert  \geq 5\}) \leq \delta$ for $\delta > 0$ and $j = 1, \ldots , d$.
Then, with the probability $1-\delta$, the projected point $w \in \RR^d /\RR {\bf 1}$ is
$(w_1, \ldots , w_d)=(5+\alpha, -5+\epsilon_{12}, \alpha, \ldots, \alpha)$ and
\begin{equation}
d_{\rm tr}(X_1, w) = \max_{j \in [d] \setminus \{2\}} \epsilon_{1j} -\alpha,
~ ~ ~ where ~ ~ ~
\alpha=\min_{j \in [d] \setminus \{2\}} \epsilon_{1j}. \nonumber
\end{equation}
\end{lemma}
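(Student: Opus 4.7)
The plan is to apply the Blue Rule (Theorem \ref{blue-rule}) directly, using the Pl\"ucker coordinates catalogued in Lemma \ref{lm:plucker1}, on the complement of the event $\bigcup_j\{|\epsilon_{1j}|\geq 5\}$, which has probability at least $1-\delta$ by hypothesis. On this good event all $|\epsilon_{1j}|<5$, so any quantity of the form $5+\epsilon_{1j}$ is strictly positive, $-5+\epsilon_{1j}$ strictly negative, and $-10+\epsilon_{1j}<-5<\epsilon_{1j'}$; these comparisons are what drive every min and max.

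Since $m=2$, the index $\tau$ in the Blue Rule is a single element of $[d]\setminus\{i\}$, and the Pl\"ucker values take only three levels: $p(\{1,2\})=10$, $p(\{1,j\})=p(\{2,j\})=5$ for $j\geq 3$, and $p(\{i,j\})=0$ for $i,j\geq 3$. Plugging these into
\[
w_i=\max_{\tau\neq i}\min_{j\neq\tau}\bigl(X_{1j}+p(\{\tau,i\})-p(\{\tau,j\})\bigr)
\]
produces, for each $(i,\tau)$, an expression of the form $\epsilon_{1j}+c_{i,\tau,j}$ with constant $c_{i,\tau,j}\in\{-10,-5,0,5\}$. The dominant term inside each min is the one with the smallest additive constant (because $|\epsilon_{1\cdot}|<5$ is much smaller than the gaps between constants), and the dominant term inside each max is analogously the one with the largest constant. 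This is the main bookkeeping step; the only place where something subtle happens is verifying that, for $i\geq 3$, the choice $\tau=2$ contributes $\min_{j\in[d]\setminus\{2\}}\epsilon_{1j}=\alpha$ because the set $\{j:j\neq\tau\}=\{1\}\cup(\{3,\dots,d\})$ already contains $i$ itself and all the terms $\epsilon_{11},\epsilon_{1i},\{\epsilon_{1j}\}_{j\geq 3,j\neq i}$ collapse to a single pool.

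Carrying this out case by case yields:
\begin{itemize}
\item For $i=1$, $\tau=2$ gives $5+\alpha$ and every $\tau\geq 3$ gives $-5+\epsilon_{12}$, so $w_1=5+\alpha$.
\item For $i=2$, every admissible $\tau$ yields $-5+\epsilon_{12}$, so $w_2=-5+\epsilon_{12}$.
\item For $i\geq 3$, the choices $\tau=1$ and $\tau\in\{3,\dots,d\}\setminus\{i\}$ both give $-10+\epsilon_{12}$, while $\tau=2$ gives $\alpha$; since $\alpha>-5>-10+\epsilon_{12}$, we obtain $w_i=\alpha$.
\end{itemize}
This produces exactly the claimed projection $w=(5+\alpha,-5+\epsilon_{12},\alpha,\dots,\alpha)$.

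Finally, the tropical distance is computed from the coordinate differences: $X_{11}-w_1=\epsilon_{11}-\alpha$, $X_{12}-w_2=0$, and $X_{1i}-w_i=\epsilon_{1i}-\alpha$ for $i\geq 3$. Because $\alpha=\min_{j\neq 2}\epsilon_{1j}$, every $\epsilon_{1j}-\alpha$ is nonnegative and at least one equals zero, so the minimum of $\{X_{1i}-w_i\}_{i\in[d]}$ is $0$ and the maximum is $\max_{j\neq 2}\epsilon_{1j}-\alpha$. By Definition \ref{tropdist} the tropical distance is the difference, giving $d_{\rm tr}(X_1,w)=\max_{j\in[d]\setminus\{2\}}\epsilon_{1j}-\alpha$, as claimed. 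The only real obstacle is the careful enumeration in the $i\geq 3$ case to ensure that $\epsilon_{1i}$ is not accidentally excluded from the pool whose minimum defines $\alpha$; once the Pl\"ucker values are substituted, the rest is elementary comparison under the good-event assumption.
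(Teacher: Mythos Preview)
Your proof is correct and follows essentially the same approach as the paper: both apply the Blue Rule directly with the Pl\"ucker coordinates from Lemma~\ref{lm:plucker1} under the good event $|\epsilon_{1j}|<5$, and then read off the tropical distance from the coordinate differences. The only difference is presentational: the paper encodes the Pl\"ucker values as $p_{A_0}(\{i,j\})=5\,\mathrm{I}_{i\leq 2}+5\,\mathrm{I}_{j\leq 2}$, so that the $\tau$-dependent terms cancel and the Blue Rule collapses in one line to $w_i=5\,\mathrm{I}_{i\leq 2}+\min\{X_{1,i}-5\,\mathrm{I}_{i\leq 2},\text{2nd-min}_j(X_{1,j}-5\,\mathrm{I}_{j\leq 2})\}$, whereas you carry out the equivalent case-by-case enumeration over $(i,\tau)$ explicitly.
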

\begin{proof}
By the Blue Rule and $p_{A_0}(\{i,j\}) = 5\textrm{I}_{i \leq 2} + 5\textrm{I}_{j \leq 2}$,
\begin{eqnarray}
w_i &=& \max_{\tau \neq i} \min_{j \neq \tau} ~ (X_{1,j} + p_{A_0}(\{\tau,i\}) -p_{A_0}(\{\tau,j\})) \nonumber \\
    &=& 5\textrm{I}_{i \leq 2} + \max_{\tau \neq i} \min_{j \neq \tau} ~ (X_{1,j} - 5\textrm{I}_{j \leq 2}) \nonumber \\
    &=& 5\textrm{I}_{i \leq 2} + \min\{ X_{1,i} - 5\textrm{I}_{i \leq 2}, \textrm{2nd-min}_j ~ (X_{1,j} - 5\textrm{I}_{j \leq 2}) \}.\nonumber
\end{eqnarray}
If $\lvert \epsilon_{1i}\rvert <5$ for $i = 1, \ldots , d$, then $w_i = 5\textrm{I}_{i \leq 2} + \min\{ X_{1,i} - 5\textrm{I}_{i \leq 2}, \alpha \}.$
\end{proof}

\begin{lemma}\label{lm:plucker4}
Suppose we project $X_2$ to the Stiefel tropical linear space $p_{A_0}$ and
$P(\cup_j\{\lvert \epsilon_{2j}\rvert  \geq 5\}) \leq \delta$ for $\delta > 0$ and $j = 1, \ldots , d$.
Then, with the probability $1-\delta$, the projected point $w \in \RR^d /\RR {\bf 1}$ is
$(w_1, \ldots , w_d)=(-5 + \epsilon_{21}, 5+\beta, \beta, \ldots, \beta)$ and
\begin{equation}
d_{\rm tr}(X_2, w) = \max_{j = 2, \ldots ,d} \epsilon_{2j} - \beta,
~ ~ ~ where ~ ~ ~
\beta=\min_{j = 2, \ldots ,d} \epsilon_{2j}. \nonumber
\end{equation}
\end{lemma}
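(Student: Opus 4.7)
The proof mirrors the one for Lemma \ref{lm:plucker3} with the roles of the first two coordinates swapped, reflecting the fact that the large mean component of $X_2$ lies in the second entry rather than the first.

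First I would apply the Blue Rule of Theorem \ref{blue-rule} to $u = X_2$ using the Pl\"ucker coordinates computed in Lemma \ref{lm:plucker1}. Writing $p_{A_0}(\{\tau,i\}) = 5\,\textrm{I}_{\tau \leq 2} + 5\,\textrm{I}_{i \leq 2}$, the term $5\,\textrm{I}_{i \leq 2}$ pulls out of the optimization, and I obtain
\begin{equation*}
w_i = 5\,\textrm{I}_{i \leq 2} + \max_{\tau \neq i}\, \min_{j \neq \tau} \bigl(X_{2,j} - 5\,\textrm{I}_{j \leq 2}\bigr).
\end{equation*}
The entries satisfy $X_{2,j} - 5\,\textrm{I}_{j \leq 2} = -10 + \epsilon_{21}$ for $j=1$, equals $\epsilon_{22}$ for $j=2$, and equals $\epsilon_{2j}$ for $j \geq 3$.

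Next I would invoke the hypothesis that $\bigcup_j \{|\epsilon_{2j}| \geq 5\}$ has probability at most $\delta$, so on its complement $|\epsilon_{2j}| < 5$ for every $j$. On this event, $-10 + \epsilon_{21} < -5 < \epsilon_{2j}$ for all $j \geq 2$, so the index $j = 1$ strictly attains the inner minimum whenever it is available. Consequently, if $\tau \neq 1$ the inner minimum equals $-10 + \epsilon_{21}$, while if $\tau = 1$ it equals $\min_{j \geq 2} \epsilon_{2j} = \beta$. Since $\beta > -5 > -10 + \epsilon_{21}$, the outer max over $\tau \neq i$ is realized at $\tau = 1$ whenever $i \neq 1$, and at some $\tau \geq 2$ when $i = 1$. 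A brief case analysis then yields $w_1 = -5 + \epsilon_{21}$, $w_2 = 5 + \beta$, and $w_i = \beta$ for $i \geq 3$.

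Finally, reading off the coordinatewise differences $X_{2,j} - w_j$ gives $0$ in the first coordinate, $\epsilon_{22} - \beta \geq 0$ in the second, and $\epsilon_{2j} - \beta \geq 0$ for $j \geq 3$. By Definition \ref{tropdist}, the tropical distance equals the maximum minus the minimum of these differences, namely $d_{\rm tr}(X_2, w) = \max_{j = 2, \ldots, d} \epsilon_{2j} - \beta$. The only mild subtlety, as in Lemma \ref{lm:plucker3}, is ensuring the strict inequality $-10 + \epsilon_{21} < \beta$ on the good event so that the inner minimum is unambiguously attained at the first index; this follows directly from $|\epsilon_{2j}| < 5$ for all $j$.
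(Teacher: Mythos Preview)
Your argument is correct and follows exactly the template of the paper's proof of Lemma~\ref{lm:plucker3}, simply with the roles of the first two coordinates interchanged; the paper itself omits any proof of Lemma~\ref{lm:plucker4}, leaving it implicit by this symmetry. Your explicit identification of $X_{2,j}-5\,\textrm{I}_{j\leq 2}$, the argmin at $j=1$ on the good event, and the resulting $\min\{a_i,a_{(2)}\}$ structure matches the paper's derivation line for line.
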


\begin{theorem}\label{tm:mean1}
Suppose $w$ is the projected point of either $X_1$ (or $X_2$) onto the Stiefel tropical linear space $p_{A_0}$ and $P(\cup_{i, j}\{\lvert \epsilon_{ij}\rvert  \geq 5\}) \leq \delta$ for $\delta > 0, i = 1, 2$ and $j = 1, \ldots , d$.
Then the expected value of the tropical distance between $X_1$ or $X_2$ 
and $w$ is smaller than $2\sigma \sqrt{2\log(d-1)}$
with the probability $1-\delta$.
\end{theorem}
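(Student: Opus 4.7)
The plan is to combine Lemmas \ref{lm:plucker3} and \ref{lm:plucker4} with the standard Gaussian maxima bound already invoked in the proofs of Theorems \ref{thm:G1}, \ref{thm:G1cor}, \ref{thm:G1b}, and \ref{thm:G1bcor}. Both lemmas have done the real geometric work: on the event $E_i := \{\max_j |\epsilon_{ij}| < 5\}$, which has probability at least $1-\delta$, the projected point $w$ has an explicit form, and the tropical distance collapses to the \emph{range} (maximum minus minimum) of a collection of $d-1$ i.i.d.\ $N(0,\sigma^2)$ random variables. So the proof reduces to bounding the expected range of $d-1$ i.i.d.\ centered Gaussians.

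First I would specialize to $X_1$ (the $X_2$ case is symmetric). Conditionally on $E_1$, Lemma \ref{lm:plucker3} gives
\[
d_{\rm tr}(X_1,w) \;=\; \max_{j \in [d]\setminus\{2\}} \epsilon_{1j} \;-\; \min_{j \in [d]\setminus\{2\}} \epsilon_{1j},
\]
which involves exactly $d-1$ i.i.d.\ $N(0,\sigma^2)$ summands. Then I would apply the bound from \cite{GaussianBound} in the same manner as in the proof of Theorem \ref{thm:G1}: for $n$ i.i.d.\ $N(0,\sigma^2)$ variables $\eta_1,\dots,\eta_n$, one has $\mathbb{E}[\max_i \eta_i] \leq \sigma\sqrt{2\log n}$ and, by symmetry, $\mathbb{E}[-\min_i \eta_i] = \mathbb{E}[\max_i(-\eta_i)] \leq \sigma\sqrt{2\log n}$. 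Summing these two bounds with $n = d-1$ gives
\[
\mathbb{E}\bigl[d_{\rm tr}(X_1,w)\,\big|\, E_1\bigr] \;\leq\; 2\sigma\sqrt{2\log(d-1)},
\]
which is the claimed inequality. The case of $X_2$ is identical after replacing $\{2\}$ by $\{1\}$ and using Lemma \ref{lm:plucker4}.

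The statement is phrased as ``with probability $1-\delta$'', so strictly speaking what I am bounding is the conditional expectation on the good event $E_i$, not the unconditional one. I would simply note this: on the event of probability at least $1-\delta$ whose complement is $\cup_j\{|\epsilon_{ij}| \geq 5\}$, the exact formula of Lemma \ref{lm:plucker3} (resp.\ \ref{lm:plucker4}) holds and yields the range of $d-1$ Gaussians, so the Gaussian maxima bound applies verbatim. I do not expect any technical obstacle here; the entire proof is essentially an assembly of already-established ingredients, and the main step is just recognizing that after discarding the $j=2$ (resp.\ $j=1$) coordinate, we are left with $d-1$ i.i.d.\ Gaussians whose range is controlled by the standard sub-Gaussian maximum inequality.
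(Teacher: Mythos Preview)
Your proposal is correct and matches the paper's own proof essentially step for step: invoke Lemmas \ref{lm:plucker3} and \ref{lm:plucker4} to reduce $d_{\rm tr}(X_i,w)$ to the range of $d-1$ i.i.d.\ $N(0,\sigma^2)$ variables, use symmetry to write the expected range as $2\,\mathbb{E}[\max_i \epsilon_i]$, and apply the bound from \cite{GaussianBound}. Your remark about the conditional-versus-unconditional expectation is a point the paper also leaves implicit, so you are at least as careful as the original.
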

\begin{proof}
Let $\epsilon_i \sim N(0, \sigma)$ for $i = 1, \ldots (d-1)$.
By {Lemma \ref{lm:plucker3} and \ref{lm:plucker4}} and by \citep{GaussianBound},
\begin{eqnarray}
\mathbb{E}[d_{\rm tr}(X_1, w)] = \mathbb{E}[d_{\rm tr}(X_2, w)] &=& \mathbb{E}[\max_{i = 1, \ldots, (d-1)}\epsilon_i] - \mathbb{E}[\min_{i = 1, \ldots, (d-1)}\epsilon_i],   \nonumber \\ 
&=& 2\mathbb{E}[\max_{i = 1, \ldots, (d-1)}\epsilon_i],   \nonumber \\ 
&\leq& 2 \sigma \sqrt{2 \log(d-1)}.   \nonumber
\end{eqnarray}
\end{proof}

\begin{remark}
We can similarly prove the same theorem for
$X_1 = (\nu_1 + \epsilon_{11}, -\nu_2 + \epsilon_{12}, \epsilon_{13}, \ldots , \epsilon_{1d})$ and
$X_2 = (-\nu_1 + \epsilon_{21}, \nu_2 + \epsilon_{22}, \epsilon_{23}, \ldots , \epsilon_{2d})$,
where $\nu_1, \nu_2 > 0$ be positive real numbers such that $\nu_1 \not = \nu_2$
under the assumption $\max\{P(\cup_{i, j}\{\lvert \epsilon_{ij}\rvert  \geq \nu_1\}), P(\cup_{i, j}\{\lvert \epsilon_{ij}\rvert  \geq \nu_2\})\} \leq \delta$ for $\delta > 0$ for $i = 1, 2$ and for $j = 1, \ldots , d.$
\end{remark}

\begin{remark}
One issue here is that $X_1$ and $X_2$ are not in a general position.
In fact, the best-fit one-dimensional Stiefel tropical linear space for two Gaussian described in Theorem \ref{tm:mean1} may not be unique in the limit of $\sigma \to 0$.
However, the above one is the best one in the sense it is natural and stable (robust).
\end{remark}

It may be rather convenient to consider a general position case for which the solution is unique and should coincide with the deterministic one shown in the previous subsection.
In this general case, the Blue Rule becomes too complicated and we simply bound with inequalities instead.

\begin{theorem}\label{tm:mean1-general}
Suppose we have random variables
\[
\begin{array}{cccccccc}
     X_1 &=& (\mu_{11} + \epsilon_{11},& \mu_{12} + \epsilon_{12}, & \mu_{13}+\epsilon_{13},& \ldots , & \mu_{1d}+\epsilon_{1d})  \\
     X_2 &=& (\mu_{21} + \epsilon_{21},&\mu_{22} + \epsilon_{22}, & \mu_{23}+\epsilon_{23},& \ldots , & \mu_{2d}+\epsilon_{2d})  \\
\end{array}
\]
where $\mu_{ij} \in \RR$ are in general positions ($\mu_i-\nu_i \neq \mu_j-\nu_j$ for $1 \leq i < j \leq 3$) and $\epsilon_{ij} \sim N(0, \sigma)$ with small $\sigma > 0$ for $i = 1, 2$ and $j = 1, \ldots , d$.
Suppose we project $X_1$ (or $X_2$) to the Stiefel tropical linear space that passes through
$\mu_1=(\mu_{11}, \mu_{12}, \mu_{13}, \ldots, \mu_{1d})$ and
$\mu_2=(\mu_{21}, \mu_{22}, \mu_{23}, \ldots, \mu_{2d})$.
Then the expected value of the tropical distance between $X_1$ (or $X_2$) and the projected point $X'_1$ (or $X'_2$) goes to $0$ as $\sigma \to 0$.
\end{theorem}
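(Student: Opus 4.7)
The plan is to reduce this theorem to an argument essentially identical to the final theorem of Section \ref{sec:oneGaussian}, exploiting the fact that by Theorem \ref{th:uniqueness}, the Stiefel tropical linear space $L$ through $\mu_1$ and $\mu_2$ (in general position) exists and is unique, and in particular $\mu_1 \in L$ and $\mu_2 \in L$. The key observation is that since the projection $X'_1 = \pi_L(X_1)$ is by definition the \emph{nearest} point of $L$ to $X_1$ in the tropical metric (Blue Rule, Theorem \ref{blue-rule}), and since $\mu_1$ itself lies on $L$, we immediately obtain the pointwise bound
\[
d_{\rm tr}(X_1, X'_1) \leq d_{\rm tr}(X_1, \mu_1).
\]
An identical inequality holds for $X_2$ and $\mu_2$.

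First, I would write $X_1 = \mu_1 + \epsilon_1$ where $\epsilon_1 = (\epsilon_{11},\ldots,\epsilon_{1d})$ has iid $N(0,\sigma^2)$ coordinates. By translation invariance of the tropical metric on $\mathbb R^d/\mathbb R{\bf 1}$ (Definition \ref{tropdist} depends only on coordinate differences), we have
\[
d_{\rm tr}(X_1, \mu_1) = d_{\rm tr}(\mu_1+\epsilon_1, \mu_1) = d_{\rm tr}(\epsilon_1, 0) = \max_{i} \epsilon_{1i} - \min_{i} \epsilon_{1i}.
\]
Then I would apply the standard Gaussian maximum bound \citep{GaussianBound} used repeatedly in Section \ref{sec:oneGaussian} (for example in the proof of Theorem \ref{thm:G1}):
\[
\mathbb{E}\bigl[d_{\rm tr}(X_1, X'_1)\bigr] \leq \mathbb{E}\!\left[\max_{i}\epsilon_{1i}\right] + \mathbb{E}\!\left[\max_{i}(-\epsilon_{1i})\right] \leq 2\sigma\sqrt{2\log d}.
\]
The same estimate, applied to $\epsilon_2$, bounds $\mathbb{E}[d_{\rm tr}(X_2, X'_2)]$. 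Sending $\sigma \to 0$ then yields the claim for each of $X_1$ and $X_2$.

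There is really no obstacle of substance here: all the nontrivial content has been front-loaded into Theorem \ref{th:uniqueness} (existence and uniqueness of the Stiefel tropical linear space through two points in general position, which guarantees that the target space is well-defined and contains the centers) and into the Blue Rule (which guarantees the projection is the nearest point). The only conceptual point worth emphasizing in the write-up is the role of general position: without it, there may be several candidate linear spaces through $\mu_1$ and $\mu_2$, and the statement must be interpreted as referring to the unique one produced by Theorem \ref{th:uniqueness}; once that is fixed, the proof is a one-line consequence of the triangle-type inequality $d_{\rm tr}(X_i, \pi_L(X_i)) \leq d_{\rm tr}(X_i, \mu_i)$ and the Gaussian maximum tail bound.
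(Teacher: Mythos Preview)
Your proposal is correct and follows essentially the same approach as the paper: bound $d_{\rm tr}(X_i, X'_i)$ by $d_{\rm tr}(X_i, \mu_i)$ using that $\mu_i$ lies on the linear space, rewrite this as $d_{\rm tr}(\epsilon_i, 0)$ by translation invariance, and apply the Gaussian maximum bound \citep{GaussianBound} to get $2\sigma\sqrt{2\log d}$. The paper's proof is a one-line version of exactly this argument; your write-up is more explicit about the role of Theorem \ref{th:uniqueness} and the Blue Rule, but the content is the same.
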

\begin{proof}
By \citep{GaussianBound},
\begin{equation}
\mathbb{E}[d_{\rm tr}(X_1, X'_1)] \leq \mathbb{E}[d_{\rm tr}(X_1, \mu_1)] = \mathbb{E}[d_{\rm tr}(\mu_1+\epsilon_1, \mu_1)]= \mathbb{E}[d_{\rm tr}(\epsilon_1, 0)] \leq 2 \sigma \sqrt{2 \log d}. \nonumber
\end{equation}
\end{proof}

\section{Mixture of three or more Gaussians fitted by tropical polynomials over ${\mathbb R}^3 \!/\mathbb R {\bf 1}$}\label{sec:tropoly} 
To explore a possible extension of a Stiefel tropical linear space as a subspace, we consider the projection of data points onto tropical polynomials.
In ${\mathbb R}^3 \!/\mathbb R {\bf 1}$, the only nontrivial Stiefel tropical linear space is a tropical hyperplane, which is specified by a tropical linear function with a normal vector $\omega=(\omega_x, \omega_y, 0)$,
\begin{equation}\label{TropicalLinearFunction}
\omega_x \odot x \boxplus \omega_y \odot y \boxplus 0 .
\end{equation}
Similarly, we can consider a $x$-quadratic tropical hypersurface, which is specified by a corresponding tropical quadratic function,
\begin{equation}
\omega_{xx} \odot x^2 \boxplus \omega_x \odot x \boxplus \omega_y \odot y \boxplus 0 .
\end{equation}
We can further consider a $x$-cubic tropical hypersurface, which is specified by a corresponding tropical cubic function,
\begin{equation}
\omega_{xxx} \odot x^3 \boxplus \omega_{xx} \odot x^2 \boxplus \omega_x \odot x \boxplus \omega_y \odot y \boxplus 0 ,
\end{equation}
although we do not treat cubic cases in this paper.

\subsection{Deterministic setting: possible configurations of tropical curves that pass through given points}
Throughout this paper, we have the mixture of Gaussians whose centers are located in general positions in mind to fit.
Furthermore, under the assumption of the infinitesimal variances, the problem of finding the best-fit tropical curve for a mixture of Gaussians in ${\mathbb R}^3 \!/\mathbb R {\bf 1}$ can turn out to 
finding the curve that passes the centers of all the Gaussians.
Thus, we first summarize the possible configurations in this deterministic case.
Remember that the degree of freedom for a linear tropical curve to pass through is limited to two points.
Thus higher degree polynomial curves may be suitable to fit three or more Gaussians.

\subsubsection{best-fit tropical linear curves or hyperplanes}
Let us briefly review the linear curve or hyperplane case, where we try to find the straight line that passes through the two given points $(x_1, y_1, z_1)$ and $(x_2, y_2, z_2)$ in ${\mathbb R}^3 \!/\mathbb R {\bf 1}$.
Without loss of generality, $z_1=z_2=0$ and $x_1 < x_2$ are assumed, as well as $y_1 \neq y_2$.
Depending on the slope of the line that connects given two points, there are three possible configurations for the two points to lie on the different half lines as in Fig \ref{pic_config}.

\begin{figure}[!ht]
\centering
\includegraphics[width=1.3in]{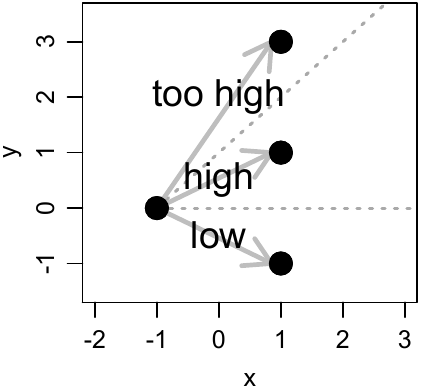} ~ 
\includegraphics[width=1.0in]{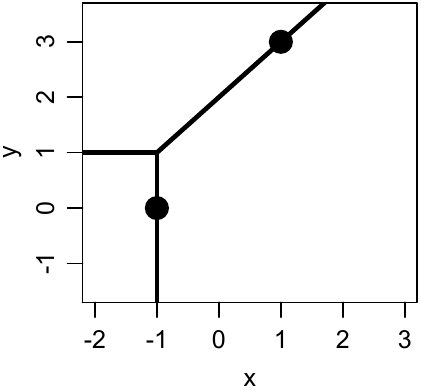} ~ 
\includegraphics[width=1.0in]{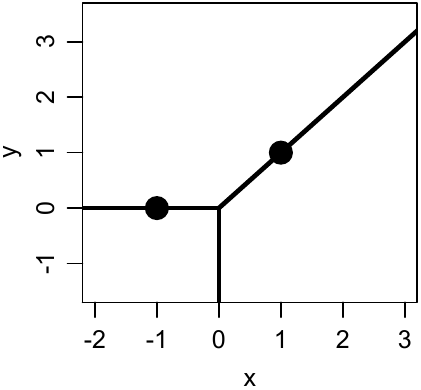} ~ 
\includegraphics[width=1.0in]{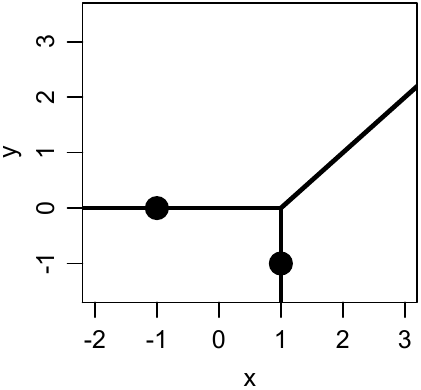}
\caption{Examples of all three possible configurations for two points on a plane. Depending on the configuration pattern, the points lie on the different half lines.}
\label{pic_config}
\end{figure}

\begin{lemma}[best-fit tropical linear curves or hyperplanes (Fig \ref{pic_config})]
\label{linear-PCA}
When the slope of the line that connects given two points, $(x_1, y_1, 0)$ and $(x_2, y_2, 0)$ in ${\mathbb R}^3 \!/\mathbb R {\bf 1}$, is larger than $1$ on the plane for the first two coordinates, the normal vector of the hyperplane that passes through the two points is $(\omega_x, \omega_y, 0) = (-x_1,-x_1+x_2-y_2,0)$.
When the slope is between $0$ and $1$, the normal vector is $(\omega_x, \omega_y, 0) = (-x_2-y_1+y_2,-y_1,0)$.
When the slope is negative, the normal vector is $(\omega_x, \omega_y, 0) = (-x_2,-y_1,0)$.
\end{lemma}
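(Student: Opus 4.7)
The plan is to exploit the explicit geometry of a tropical hyperplane in $\mathbb{R}^3/\mathbb{R}{\bf 1}$. First, I would describe any such hyperplane $H_{(\omega_x,\omega_y,0)}$ by its apex and its three half-lines. Working in the slice $z=0$, the defining condition that $\max\{\omega_x+x,\omega_y+y,0\}$ is attained at least twice gives the apex at $(-\omega_x,-\omega_y,0)$, together with three rays emanating from it: the ``upper-right'' ray $\{(x,x+\omega_x-\omega_y,0):x\geq -\omega_x\}$ (where the first two terms tie and dominate), the ``downward'' ray $\{(-\omega_x,y,0):y\leq -\omega_y\}$ (where the first and third tie), and the ``leftward'' ray $\{(x,-\omega_y,0):x\leq -\omega_x\}$ (where the second and third tie).

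Next I would check which pair of rays the two given points can occupy. Since $x_1<x_2$, both points cannot lie on the vertical (downward) ray; since $y_1\neq y_2$, they cannot both lie on the horizontal (leftward) ray; and they both lie on the upper-right ray only when the slope equals exactly $1$. So in the non-degenerate cases the two points lie on two different rays, and the choice is forced by the sign of $y_2-y_1$ and the size of $y_2-y_1$ relative to $x_2-x_1$. Specifically:
\begin{itemize}
\item If $(y_2-y_1)/(x_2-x_1)>1$, then $p_2$ is too high above $p_1$ to share the upper-right ray with the apex to the left of $p_1$; the only consistent configuration is $p_1$ on the downward ray and $p_2$ on the upper-right ray.
\item If $0<(y_2-y_1)/(x_2-x_1)<1$, then the consistent configuration is $p_1$ on the leftward ray and $p_2$ on the upper-right ray.
\item If $(y_2-y_1)/(x_2-x_1)<0$, then $p_2$ lies lower and to the right of $p_1$; the only consistent configuration is $p_1$ on the leftward ray and $p_2$ on the downward ray.
\end{itemize}

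In each case the two membership conditions are a pair of affine equations in $(\omega_x,\omega_y)$ that I would simply solve. For example, in the first case $p_1=(x_1,y_1,0)$ on the downward ray yields $\omega_x=-x_1$, and $p_2=(x_2,y_2,0)$ on the upper-right ray yields $y_2=x_2+\omega_x-\omega_y$, hence $\omega_y=-x_1+x_2-y_2$; the second and third cases are analogous and reproduce the formulas in the statement. The last step, which is the routine but necessary consistency check, is to verify that the computed $(\omega_x,\omega_y)$ does place each point on the claimed \emph{half}-line and not merely on its ambient line; this is exactly what the slope inequality guarantees (for instance, in case 1 the inequality $y_2-y_1>x_2-x_1$ rearranges to $y_1<-\omega_y$, which is the ``downward ray'' condition at $p_1$).

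The main obstacle is not algebraic but combinatorial: ruling out spurious configurations and confirming that precisely one assignment of points to rays is feasible in each slope regime. Once this case split is pinned down, each individual computation collapses to solving a $2\times 2$ linear system, and the boundary slopes $0$ and $1$ (excluded by hypothesis since the points are in general position) are easily seen to make two of the three formulas coincide, so the statement extends continuously across them.
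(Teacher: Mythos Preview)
Your argument is correct and is exactly the ``direct calculation'' the paper alludes to: you parametrize the three half-lines of $H_{(\omega_x,\omega_y,0)}$ from its apex, use $x_1<x_2$ to force which of the three rays can carry $p_1$ versus $p_2$, and then read off $(\omega_x,\omega_y)$ from the two resulting affine equations, verifying the half-line constraints via the slope inequality. The only small point worth making explicit is the elimination of the three ``reversed'' assignments (e.g.\ $p_1$ on the upper-right ray and $p_2$ on the downward ray), but each of these forces $x_1\geq x_2$ and so is ruled out immediately by the standing assumption.
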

\begin{proof}
Direct calculations.
\end{proof}

\begin{remark}
Algebraically speaking, the condition that a point $(x,y)$ is on a hyperplane is equivalent to the condition that the normal vector of the hyperplane is on a hyperplane whose normal vector is $(x,y)$.
Thus, if two points $(x_1,y_1)$ and $(x_2,y_2)$ are on a hyperplane, the normal vector of the hyperplane is the intersection of two hyperplanes whose normal vectors are $(x_1,y_1)$ and $(x_2,y_2)$.
\end{remark}

\subsubsection{best-fit tropical $x$-quadratic curves}
Here we try to find the quadratic curve that passes the three given points $(x_i, y_i, z_i)$ in ${\mathbb R}^3 \!/\mathbb R {\bf 1}$ for $i=1,2,3$.
Without loss of generality, $z_1 = z_2 = z_3 = 0$ and $x_1 < x_2 < x_3$ are assumed, as well as $y_1 \neq y_2 \neq y_3 \neq y_1$.
Depending on the slope of the connecting line segments, there are $9(=3 \times 3)$ possible configurations for the three points to lie on the different half lines or line segments as in Fig \ref{pic_config2}.
Interestingly, $x$-quadratic curves cannot pass through one of the nine configurations.

\begin{figure}[!ht]
\centering
\includegraphics[width=1.45in]{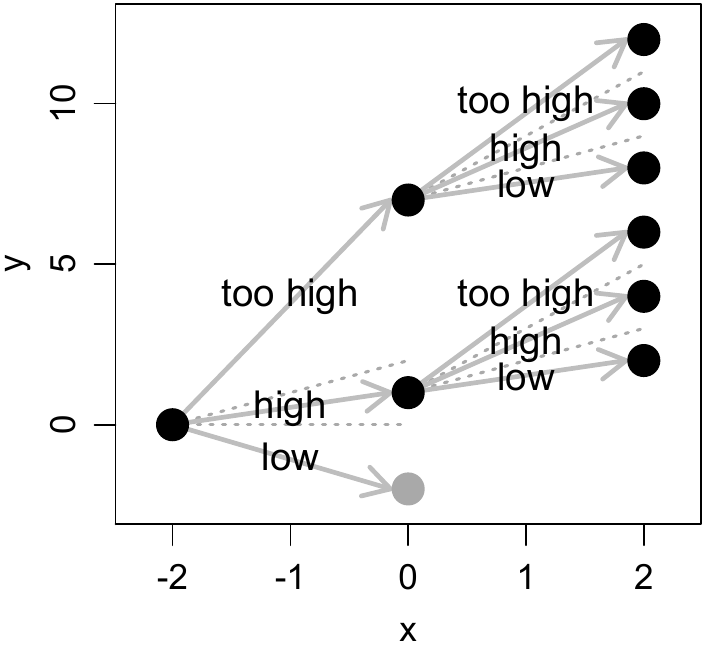} ~ 
\includegraphics[width=1.45in]{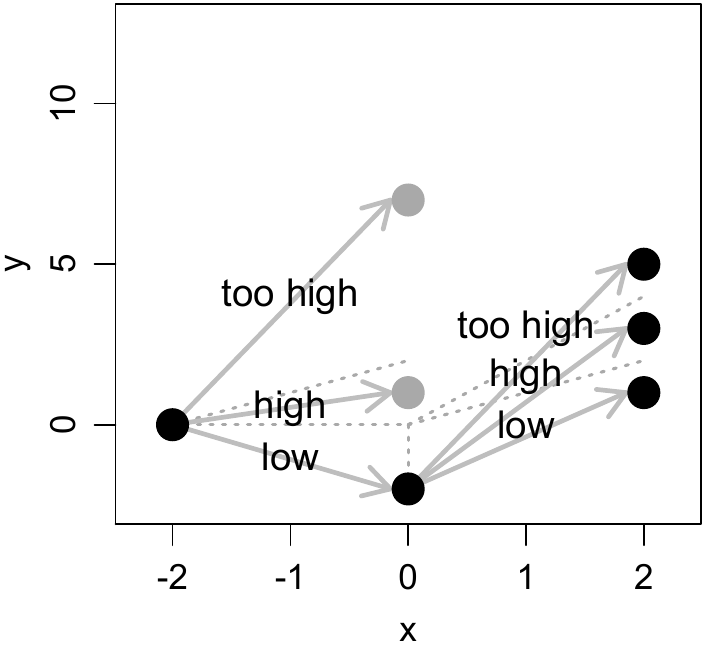} ~ 
\includegraphics[width=1.05in]{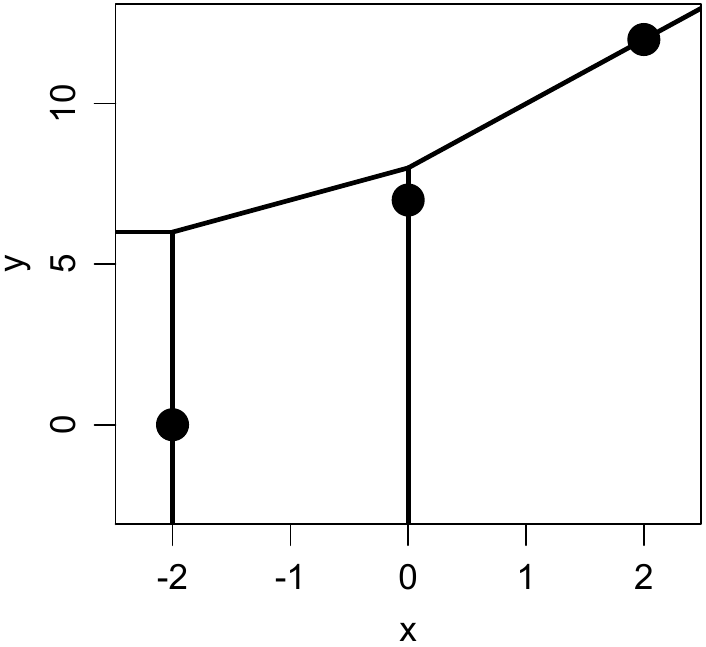} ~ 
\includegraphics[width=1.05in]{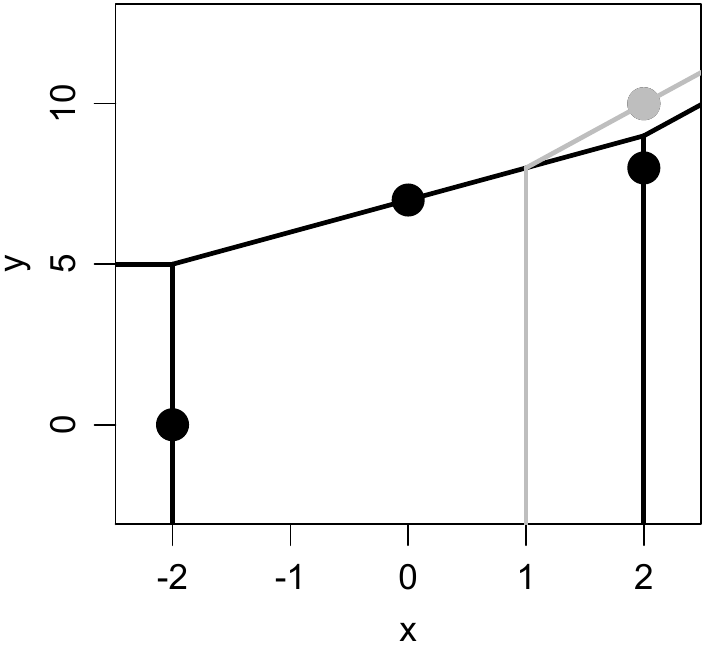}
\includegraphics[width=1.05in]{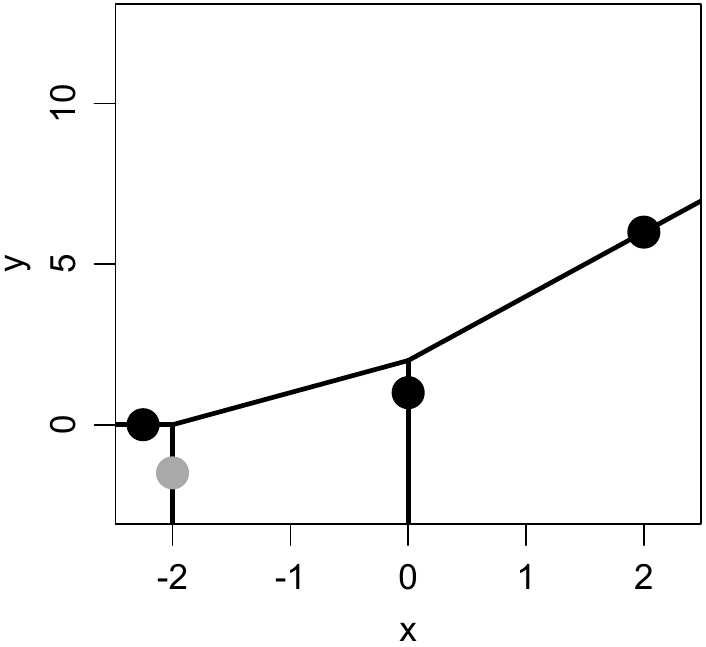} ~
\includegraphics[width=1.05in]{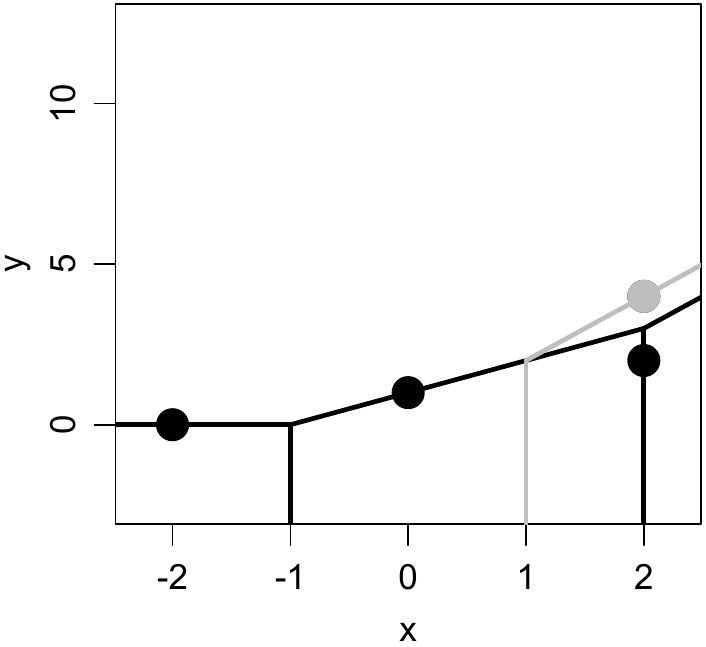} ~ 
\includegraphics[width=1.05in]{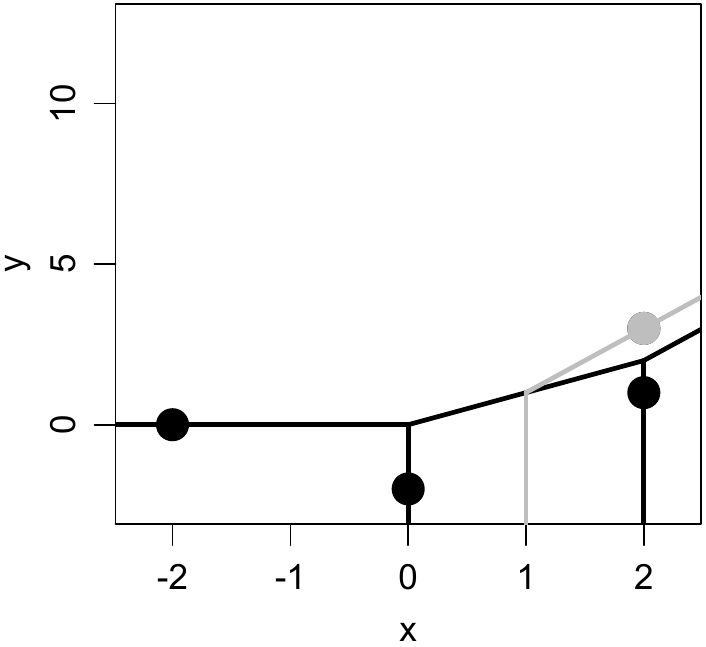} ~ 
\caption{Examples of all eight possible configurations for three points on a plane. Depending on the configuration pattern, the points lie on the different half lines or line segments. The third and the following figures show, respectively, "TooHigh-TooHigh", "TooHigh-High/Low", "High-TooHigh", "High-High/Low" and "Low-High/Low" configurations.}
\label{pic_config2}
\end{figure}

\begin{lemma}[Best-fit tropical $x$-quadratic curves (Fig \ref{pic_config2})]
\label{quadratic-PCA}
In the case of "Low-TooHigh" configuration with $y_1 > y_2$ and $y_1+2(x_3-x_2) < y_3$, there is no $x$-quadratic curve that passes through the three points in ${\mathbb R}^3 \!/\mathbb R {\bf 1}$.
In the other eight configurations, there is a unique $x$-quadratic curve that passes through the three points in ${\mathbb R}^3 \!/\mathbb R {\bf 1}$, where the points lie on the different half lines or line segments depending on the configuration as in Fig \ref{pic_config2}.
\end{lemma}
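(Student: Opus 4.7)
The plan is a nine-way case analysis on the slope categories of the two segments connecting consecutive points, carrying out a closed-form solve-and-verify step in the eight existence cases and an exhaustive impossibility argument in the exceptional case.

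First, I would write out the four tropical monomials
\[
M_1 = 2x + \omega_{xx},\qquad M_2 = x + \omega_x,\qquad M_3 = y + \omega_y,\qquad M_4 = 0,
\]
so that the $x$-quadratic tropical curve associated with $(\omega_{xx},\omega_x,\omega_y)$ is the locus where at least two of $M_1,\ldots,M_4$ realize the maximum. The six pairwise equality loci $\{M_i=M_j\}$ yield three vertical edges together with one edge each of slope $0$, $1$, $2$, arising from $\{M_3,M_4\}$, $\{M_2,M_3\}$, $\{M_1,M_3\}$ respectively. The category of the connecting slope between consecutive data points (Low $<1$, High $\in(1,2)$, TooHigh $>2$) encodes the local edge structure of the curve between them, and together with the ordering $x_1<x_2<x_3$ it pins down (generically, thanks to $y_1\ne y_2\ne y_3\ne y_1$) which of the six equality loci each of the three data points must lie on.

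In each of the eight non-exceptional configurations, I would read off the forced edge assignment for the three points, write the three equations $M_i(x_k,y_k)=M_j(x_k,y_k)$ corresponding to the assigned edge of point $k$, and solve the resulting $3\times 3$ linear system for $(\omega_{xx},\omega_x,\omega_y)$ in closed form. I would then check that the solution satisfies the inequalities required for the prescribed edges actually to appear in the curve and to contain the corresponding points. Existence follows once these inequalities are verified, and uniqueness follows because the slope categories together with the $x$-ordering exclude every alternative edge assignment of the three points.

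For the exceptional Low-TooHigh configuration with $y_1>y_2$ and $y_1+2(x_3-x_2)<y_3$, I would show non-existence by enumerating the short list of candidate edge assignments compatible with $x_1<x_2<x_3$ and the sign patterns of $y_1-y_2$, $y_3-y_2$. The driving intuition is that an $x$-quadratic tropical curve can gain $y$-height at rate at most $2$ as $x$ increases, so the combined hypotheses force point $3$ onto the slope-$2$ edge $\{M_1=M_3\}$, giving $\omega_y=\omega_{xx}+2x_3-y_3$. Substituting this into each remaining candidate edge assignment for points $1$ and $2$ and eliminating $(\omega_{xx},\omega_x,\omega_y)$, the compatibility conditions collapse in every case to an inequality equivalent to $y_3-y_1\le 2(x_3-x_2)$, which directly contradicts the hypothesis $y_1+2(x_3-x_2)<y_3$.

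The main obstacle is the bookkeeping of the case analysis: verifying in each of the eight existence cases that the closed-form solution satisfies every strict inequality defining the assigned edges, and in the exceptional case confirming that no alternative edge assignment survives. A unified formula along the lines of Theorem~\ref{th:uniqueness} (where the tropical determinant packaged all sub-cases into a single closed form) would be the cleanest presentation of the existence cases, but the presence of three finite edge slopes rather than two makes such a unification less immediate than in the tropical hyperplane setting.
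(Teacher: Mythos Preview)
Your plan is essentially the same approach as the paper's: the paper records the proof simply as ``Direct calculations,'' and what you describe is precisely the nine-way case analysis that such a computation entails. Your organization via the four monomials $M_1,\ldots,M_4$, the enumeration of edge slopes $\{0,1,2,\infty\}$, and the solve-and-verify scheme for each configuration is a faithful expansion of what the paper leaves implicit.

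One point to tighten: your slope thresholds for the three categories do not quite match the paper's conventions. From the linear case (Lemma~\ref{linear-PCA}) and from the defining inequality $y_1>y_2$ for ``Low'' in the present lemma, the label ``Low'' refers to a \emph{negative} slope, not merely slope $<1$. Correspondingly, the condition $y_1+2(x_3-x_2)<y_3$ that the paper attaches to ``Low--TooHigh'' is not simply ``second-segment slope exceeds $2$'' but a slightly stronger inequality tied to $y_1$ rather than $y_2$; you will need this exact form when deriving the contradiction in the non-existence case, since the weaker inequality $y_2+2(x_3-x_2)<y_3$ alone does not suffice. Once you align your thresholds with the paper's, the rest of your plan goes through.
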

\begin{proof}
Direct calculations.
\end{proof}


\subsection{Probabilistic setting: distance to best-fit space}
To perform a PCA for point clouds, we need a projection rule onto a curve.

\begin{lemma}
The projection rules in each delineated region of $\mathbb R^3/\mathbb R {\bf 1}$ to the hyperplane $H_0$ as well as the $x$-quadratic curve whose nodes are $(0,0,0)$ and $(0,1,1)$ are the rules shown in Fig. \ref{pic_proj_2d_quadratic}.
Especially, the distances from $(x, y, 0)$ to the curves are denoted by the red texts.
\end{lemma}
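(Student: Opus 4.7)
The plan is to verify the two projection rules separately, first for the tropical hyperplane $H_0$ and then for the $x$-quadratic curve, by partitioning $\mathbb{R}^3/\mathbb{R}\mathbf{1}$ into the regions indicated in the figure and computing the projection and tropical distance directly in each region.

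For $H_0$, I would normalize representatives as $(x, y, 0)$ and use the standard description of $H_0 \cap \{z = 0\}$ as the union of three rays from the origin: $\{(t, t, 0) : t \geq 0\}$, $\{(0, t, 0) : t \leq 0\}$, and $\{(t, 0, 0) : t \leq 0\}$. Applying Lemma 2.1 and Corollary 2.3 of \cite{Gartner} (or, equivalently, the Blue Rule of Theorem \ref{blue-rule} in the special case $m = d - 1$) gives the closed form $d_{\rm tr}((x,y,0), H_0) = \max\{x,y,0\} - \mathrm{2nd\,max}\{x,y,0\}$. The six open regions obtained by ordering $x$, $y$, and $0$ partition the plane, and in each region the unique maximizer determines which coordinate is lowered to produce the projection; a direct substitution yields both the projection point and the distance claimed in the figure.

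For the $x$-quadratic curve with prescribed nodes $(0,0,0)$ and $(0,1,1)$, I would first recover the coefficients $\omega_{xx}, \omega_x, \omega_y$ from the requirement that both nodes lie on the curve and are the trivalent vertices where three of the four monomials in the defining polynomial attain the tropical maximum simultaneously. This fixes the curve as a tree-shaped subset consisting of the two trivalent vertices connected by a single tropical segment, together with four outgoing rays in directions forced by the tropical balancing condition. Once the edges and rays are listed explicitly, I would partition the complement of the curve into regions bounded by the tropical perpendiculars dropped from each edge, and in each region the projection rule follows from the fact that projecting a point onto a single tropical ray or segment reduces to a one-parameter minimization of $\max_i\{p_i - w_i\} - \min_i\{p_i - w_i\}$, which is piecewise linear and can be carried out explicitly.

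The hardest part will be the case-analysis bookkeeping for the quadratic curve, in particular identifying the boundaries between the regions where different edges are closest, and handling the points where the nearest point of the curve is non-unique (typically lying on a trivalent vertex, cf. the remark after Theorem \ref{blue-rule}). A convenient sanity check at each step is that the difference $u - w$ output by the regional formula agrees with the vector returned by the Red Rule of Theorem \ref{red-rule}, which provides an independent verification of both the projection point and the distance claimed in Figure \ref{pic_proj_2d_quadratic}.
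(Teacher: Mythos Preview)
Your plan is essentially the same region-by-region computation the paper carries out; the paper's proof is the two-line sketch ``by the triangle inequality one only needs to consider the bounding edges of each region, then compute directly,'' and your proposal is a fleshed-out version of exactly that. One caveat: your proposed sanity check via the Red Rule of Theorem~\ref{red-rule} only applies to the $H_0$ case, since the $x$-quadratic curve is a tropical hypersurface defined by a degree-two polynomial rather than a tropical linear space with Pl\"ucker coordinates, so Theorems~\ref{blue-rule} and~\ref{red-rule} are not available there and you must rely solely on the explicit edge-by-edge minimization.
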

\begin{proof}
By the triangle inequality, you only need to consider the boundaries of each region as candidates of the projection.
Remaining is done by direct calculations for each region.
\end{proof}

\begin{figure}[!ht]
\centering
\includegraphics[width=1.6in]{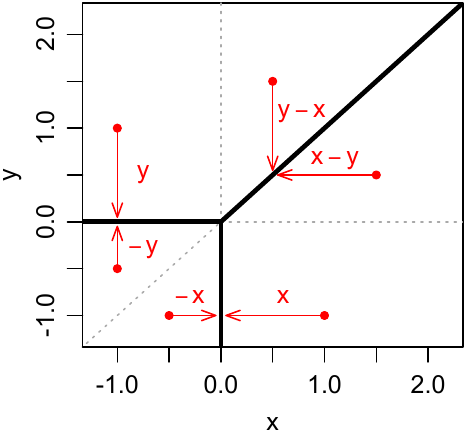} ~ ~ ~ ~ 
\includegraphics[width=1.6in]{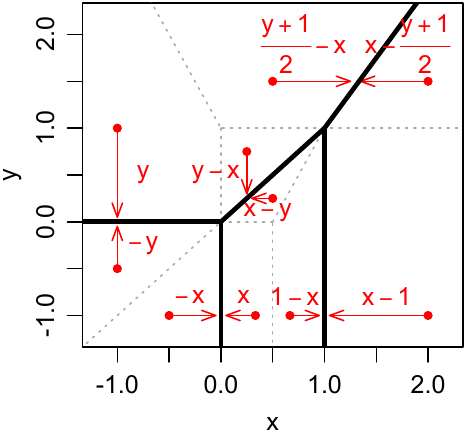}
\caption{Projection rule in $\mathbb R^3/\mathbb R {\bf 1}$ to the hyperplane $H_0$ (left) and the quadratic curve whose nodes are $(0,0,0)$ and $(0,1,1)$ (right).
The red texts represents the distance from a point $(0,x,y)$ to the curve with one of the geodesics shown as a red arrow.
This distance function is piecewise linear on the domains delineated by the dotted gray lines and the curve itself.
Although, in the quadratic case, we do not have a simple rule like "max - 2nd max" for the hyperplane,
at least one of the geodesics is a vertical or horizontal line segment,
demonstrating the equivalence to $L_1$ norm.}
\label{pic_proj_2d_quadratic}
\end{figure}

Similar to fitting to a Stiefel tropical linear space, for fitting to a tropical polynomial, we also have an upper bound for the convergence rate of the mean distance between observations in a given sample and their projections as $\sigma \to 0$.
There, in practice, we do not know the Gaussian center $\mu$ in general and we estimate $\mu$ by its point estimate $\hat{\mu} =\frac{1}{n}\sum_{i=1,..,n} X_i$.

\begin{lemma}\label{lm:bias}
Suppose $X_i \sim N(\mu, \sigma \mathbb{I}_d)$ in $\mathbb R^d/\mathbb R {\bf 1}$ for $i=1,...,n$.
Then $\mathbb{E} \left[ d_{\rm tr}(X_i, \frac{1}{n}\sum_{i=1,..,n} X_i) \right] \leq \sqrt{\frac{n-1}{n}} 2\sigma \sqrt{2\log(d)}$ .
\end{lemma}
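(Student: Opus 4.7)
The plan is to reduce the claim to the Gaussian maximum bound that has already been invoked repeatedly (e.g. in Theorems \ref{thm:G1}, \ref{thm:G1cor}, \ref{thm:G1b}, \ref{thm:G1bcor} via \citep{GaussianBound}), after first identifying the marginal law of the residual vector $X_i - \hat\mu$. Note that the ambient-space shift $\mu$ is irrelevant: since $d_{\rm tr}$ is invariant under tropical scalar addition and also under a common Euclidean shift of all coordinates, we may assume $\mu = 0$ without loss of generality.

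First, I would write
\[
X_i - \hat\mu \;=\; X_i - \tfrac{1}{n}\sum_{j=1}^{n} X_j \;=\; \tfrac{n-1}{n}\,X_i \;-\; \tfrac{1}{n}\sum_{j\neq i} X_j,
\]
and observe that each coordinate of $X_i - \hat\mu$ is a linear combination of independent $N(0,\sigma^2)$ variables, hence marginally Gaussian with mean $0$ and variance
\[
\left(\tfrac{n-1}{n}\right)^{\!2}\sigma^2 \;+\; (n-1)\left(\tfrac{1}{n}\right)^{\!2}\sigma^2 \;=\; \tfrac{n-1}{n}\,\sigma^2.
\]
So writing $\tilde\sigma := \sigma\sqrt{(n-1)/n}$, each coordinate of $X_i - \hat\mu$ is $N(0,\tilde\sigma^{2})$ (though the coordinates are of course correlated, this does not matter for the argument to follow, which only uses marginals).

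Next, by the definition of the tropical distance (Definition \ref{tropdist}),
\[
d_{\rm tr}(X_i,\hat\mu) \;=\; \max_{k\in[d]} (X_i-\hat\mu)_k \;-\; \min_{k\in[d]} (X_i-\hat\mu)_k \;=\; \max_{k\in[d]} (X_i-\hat\mu)_k \;+\; \max_{k\in[d]} \bigl(-(X_i-\hat\mu)_k\bigr).
\]
Taking expectations and applying the standard Gaussian maximum bound $\mathbb E[\max_{k\leq d} \xi_k] \leq \tilde\sigma\sqrt{2\log d}$ for centered Gaussians with variance at most $\tilde\sigma^2$ (the very bound cited as \citep{GaussianBound} throughout this paper) to both $(X_i-\hat\mu)_k$ and $-(X_i-\hat\mu)_k$ yields
\[
\mathbb E\bigl[d_{\rm tr}(X_i,\hat\mu)\bigr] \;\leq\; 2\tilde\sigma\sqrt{2\log d} \;=\; \sqrt{\tfrac{n-1}{n}}\,2\sigma\sqrt{2\log d},
\]
which is the stated inequality.

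The only subtlety, and the one small ``obstacle'' in this otherwise routine argument, is the variance bookkeeping: one must track that the negative correlation between $X_i$ and $\hat\mu$ \emph{reduces} the marginal variance of the residual from $\sigma^2$ to $\sigma^2(n-1)/n$, which is precisely what produces the $\sqrt{(n-1)/n}$ factor improving on the naive bound $2\sigma\sqrt{2\log d}$ one would obtain by the triangle inequality together with Theorem \ref{thm:G1b}. Everything else (the invariance under common shifts, the $\max-\min$ decomposition, and the Gaussian maximum inequality applied marginally) is already used verbatim elsewhere in the paper.
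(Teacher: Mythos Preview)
Your proof is correct and follows essentially the same approach as the paper's: compute that each coordinate of the residual $X_i-\hat\mu$ has marginal law $N(0,\tfrac{n-1}{n}\sigma^2)$, use translation invariance of $d_{\rm tr}$, and apply the Gaussian maximum bound \citep{GaussianBound} to the $\max-\min$ decomposition. One small correction: under $\Sigma=\sigma\mathbb{I}_d$ the coordinates of $X_i-\hat\mu$ are actually \emph{independent} (it is the residuals across different $i$ that are correlated), though as you note this is irrelevant since the bound only uses marginals.
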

\begin{proof}
For univariate random variables $U_i$, if $U_i \sim N(\mu, \sigma)$ for $i=1,...,n$, $U_i - \frac{1}{n}\sum_{i=1,..,n} U_i \sim N(0, \sqrt{\frac{n-1}{n}} \sigma)$.
By \citep{GaussianBound},
\begin{equation}
\mathbb{E} \left[ d_{\rm tr}(X_i, \frac{1}{n}\sum_{i=1,..,n} X_i) \right] = \mathbb{E} \left[ d_{\rm tr}(X_i - \frac{1}{n}\sum_{i=1,..,n} X_i, 0) \right] \leq
\sqrt{\frac{n-1}{n}} 2\sigma \sqrt{2\log(d)} \nonumber
\end{equation}
\end{proof}

\begin{theorem}
Suppose the centers of $l$ Gaussians in $\mathbb R^3/\mathbb R {\bf 1}$ are estimated as 
$\hat\mu^k = \frac{1}{n}\sum_{i=1,..,n} X^k_i$ where
$X^k_i \sim N(\mu^k, \sigma \mathbb{I}_3)$ for $i=1,...,n$ and $k=1,...,l$.
Let $X^k_{i, proj}$ be the projection of $X^k_i$ to the tropical polynomial curve that passes through all the estimated centers of the $p$ Gaussians.
Then the expectation of their tropical distance is smaller than $\sqrt{\frac{n-1}{n}} 2\sigma \sqrt{2\log(3)}$.
\end{theorem}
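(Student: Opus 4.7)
The plan is to leverage Lemma \ref{lm:bias} together with the defining property of the projection, namely that no point on the curve can be further from $X^k_i$ than an arbitrarily chosen point on the curve.

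First, I would observe that by construction the tropical polynomial curve passes through every estimated center $\hat\mu^k$, including the one corresponding to the component that generated $X^k_i$. Hence $\hat\mu^k$ itself is a feasible point on the curve for comparison.

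Second, since $X^k_{i,\mathrm{proj}}$ is (by definition) the point on the curve minimizing the tropical distance to $X^k_i$, I would immediately obtain the pointwise inequality
\begin{equation*}
d_{\rm tr}(X^k_i, X^k_{i,\mathrm{proj}}) \;\leq\; d_{\rm tr}(X^k_i, \hat\mu^k).
\end{equation*}
Taking expectations on both sides preserves the inequality.

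Third, I would apply Lemma \ref{lm:bias} directly with $d=3$, which gives $\mathbb{E}[d_{\rm tr}(X^k_i, \hat\mu^k)] \leq \sqrt{\frac{n-1}{n}}\, 2\sigma\sqrt{2\log 3}$, finishing the proof. There is essentially no obstacle here: the main conceptual point is simply that a Fermat-type projection onto a set containing $\hat\mu^k$ cannot be worse than projecting onto $\hat\mu^k$ itself, so all the probabilistic work has already been done in Lemma \ref{lm:bias}. The only thing worth double-checking is that the projection onto a tropical polynomial curve (as discussed in the preceding subsection) is indeed well-defined as a distance-minimizer and that $\hat\mu^k$ truly lies on the fitted curve, both of which are built into the setup of the theorem.
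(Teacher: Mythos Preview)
Your proposal is correct and follows exactly the same approach as the paper: the paper's proof is simply the chain $\mathbb{E}[d_{\rm tr}(X^k_i,X^k_{i,\mathrm{proj}})] \leq \mathbb{E}[d_{\rm tr}(X^k_i,\hat\mu^k)] \leq \sqrt{\frac{n-1}{n}}\,2\sigma\sqrt{2\log 3}$, where the first inequality uses that $\hat\mu^k$ lies on the curve and the second is Lemma~\ref{lm:bias} with $d=3$. You have made the reasoning behind the first inequality more explicit than the paper does, but the argument is identical.
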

\begin{proof}
By Lemma \ref{lm:bias},
\begin{equation}
\mathbb{E}\left[ d_{\rm tr}(X^k_i,X^k_{i, proj}) \right]
\leq \mathbb{E}\left[d_{\rm tr}(X^k, \hat\mu^k)\right]
\leq \sqrt{\frac{n-1}{n}} 2\sigma \sqrt{2\log(3)}. \nonumber
\end{equation}
\end{proof}

\section{Discussion}
In this paper, we focus on asymptotic behaviors of best-fit Stiefel tropical spaces over the tropical projective space when a sample is generated by a mixture of Gaussian distributions. Specifically we focus on asymptotic behaviors of a matrix associate{d} with the Pl\"ucker coordinates of a Stiefel tropical space over the tropical projective space when a sample is generated from a mixture of Gaussian distributions.  Then we investigated on best-fit tropical polynomials over the tropical projective space when a sample is generated from a Gaussian mixture.  

First, we consider a single Gaussian case and we showed that when the mean of the Gaussian distribution is located at the point of a Stiefel tropical space which has the co-dimension equal to $d$  ({i.e.,} the apex of hyperplanes), then it is a best-fit Stiefel tropical space over the tropical projective space.  However, it is not clear that this is an only best-fit Stiefel tropical space to a sample generated by a single Gaussian distribution. For $d = 3$, we prove that when the mean of the Gaussian distribution is located at the point of a Stiefel tropical space which has the co-dimension equal to $d = 3$, then it is the unique best-fit Stiefel tropical spaces over the tropical projective space.  In general it is still an open problem.  

{ Actually, the convergence results (but not the convergence rates) in Theorem \ref{thm:G1}, \ref{thm:G1cor}, \ref{thm:G1b}, \ref{thm:G1bcor}, \ref{thm:nonuniqueness}, \ref{tm:mean1-general} immediately follow from the continuity of $d_{\rm tr}$ or $\lim_{\sigma \to 0} d_{\rm tr}(X,L) = d_{\rm tr}(\lim_{\sigma \to 0} X,L) = 0$ whenever the Stiefel tropical linear space $L$ contains the mean of the distribution from which $X$ is sampled. However, our proofs give the upper bounds of the convergence rates on the way to prove the convergence as an additional information.}

In addition, in this paper, for a simplicity, we consider a mixture of Gaussian distributions, where each Gaussian distribution has a diagonal covariance matrix, i.e., variables are uncorrelated in each Gaussian distribution. We do not know an asymptotic behavior of the best-fit Stiefel tropical space when we have  general correlations between variables in each Gaussian distribution.

Then we consider fitting a tropical polynomial to a sample generated by a mixture of Gaussian distributions. Specifically, we consider a special type of polynomial when $d = 3$.  In general it is not clear how to project an observation to a given tropical polynomial in terms of the tropical metric, similar to the blue rule and red rule in a case of a Stiefel tropical linear space. Projecting a point onto a tropical polynomial over the tropical projective space is a necessary and an important tool for statistical inference (supervised learning) using tropical geometry. We propose an algorithm to project a point onto a tropical polynomial for $d = 3$ and it is a future work to generalize this algorithm for $d \geq 3$.






\backmatter





\bmhead{Acknowledgments}
The authors thank Michael Joswig for his useful comments. RY is partially supported by NSF  Statistics Program DMS 1916037. KM is partially supported by JSPS KAKENHI 18K11485.




\begin{itemize}
\item Funding: RY is partially supported by NSF  Statistics Program DMS 1916037. KM is partially supported by JSPS KAKENHI 18K11485.
\item Conflict of interest/Competing interests:  There is no conflict of interests.
\item Ethics approval: We do not have any human/animal research objects so that we do not have to have ethics approvals. 
\end{itemize}

\bibliography{refs}  


\end{document}